\title{Analysis and design of jump coefficients in discrete stochastic diffusion models}
\author{Lina Meinecke, Stefan Engblom, Andreas Hellander, Per L\"{o}tstedt\thanks{Division of Scientific Computing,
			Department of Information Technology,
			Uppsala University, P. O. Box 337, SE-75105 Uppsala, Sweden.
(\email{lina.meinecke@it.uu.se}, {stefane@it.uu.se}, {andreas.hellander@it.uu.se}, {perl@it.uu.se}). 
}}
\def\calE{\mathcal{E}}
\def\calO{\mathcal{O}}
\def\calq{\mathcal{Q}}
\def\calT{\mathcal{T}}
\def\calV{\mathcal{V}}
\def\fatA{\mathbf{A}}
\def\fatC{\mathbf{C}}
\def\fatH{\mathbf{H}}
\def\fatD{\mathbf{D}}
\def\tfatD{\tilde{\fatD}}
\def\fate{\mathbf{e}}
\def\fatf{\mathbf{f}}
\def\fatp{\mathbf{p}}
\def\fatI{\mathbf{I}}
\def\fatK{\mathbf{K}}
\def\fatM{\mathbf{M}}
\def\fatn{\mathbf{n}}
\def\fatu{\mathbf{u}}
\def\fatx{\mathbf{x}}
\def\fatS{\mathbf{S}}
\def\tfatS{\tilde{\fatS}}
\def\fatV{\mathbf{V}}
\def\fatv{\mathbf{v}}
\def\fatx{\mathbf{x}}
\def\faty{\mathbf{y}}
\def\exit{\varepsilon}
\def\fatexit{\boldsymbol{\varepsilon}}
\def\fatgamma{\boldsymbol{\gamma}}
\def\tfatgamma{\tilde{\boldsymbol{\gamma}}}
\def\fatmu{\boldsymbol{\mu}}
\def\tD{\tilde{D}}
\def\tS{\tilde{S}}
\def\tgamma{\tilde{\gamma}}
\def\pOmega{\partial\Omega}
\def\pV{\partial \calV}
\def\mE{\mathbb{E}}
\newcommand{\ordo}[1]{{\cal O} \left( #1 \right)}
\numberwithin{equation}{section}
\numberwithin{table}{section}
\numberwithin{figure}{section}
\newcommand{\rev}[1]{\textcolor{red}{#1}}
\begin{document}
\maketitle

\begin{abstract}
  In computational system biology, the mesoscopic model of reaction-diffusion kinetics is described by a continuous time, discrete space Markov process. To simulate diffusion stochastically, the jump coefficients are obtained by a discretization of the diffusion equation. Using unstructured meshes to represent complicated geometries may lead to negative coefficients when using piecewise linear finite elements. Several methods have been proposed to modify the coefficients to enforce the non-negativity needed in the stochastic setting.
  In this paper, we present a method to quantify the error introduced by that change. We interpret the modified discretization matrix as the exact finite element discretization of a perturbed equation. The forward error, the error between the analytical solutions to the original and the perturbed equations, is bounded by the backward error, the error between the diffusion of the two equations. We present a backward analysis algorithm to compute the diffusion coefficient from a given discretization matrix. The analysis suggests a new way of deriving non-negative jump coefficients that minimizes the backward error.
The theory is tested in numerical experiments indicating that the new method is superior and minimizes also the forward error.
\end{abstract}

\begin{keywords}
stochastic simulation, diffusion, unstructured mesh, finite element method

\end{keywords}

\begin{AMS} 65C40, 65C05, 65M60, 60H35, 92C05
\end{AMS}

\section{Introduction}\label{sec:intro}

The molecular pathways that regulate cellular function are inherently spatial. Cells have a high level of sub-cellular organization, such as a confined nucleus in eukaryotes or membrane bound reaction complexes. Macromolecules are transported by passive diffusion or active transport, driven by molecular motors, between different areas in the cell in order to arrive at the correct location to perform their function. For example, many gene regulatory pathways rely on a cytoplasmic component, where a signal is propagated from the cell membrane to the nucleus, and a nuclear component, where transcription factors bind to DNA to regulate the expression of genes.   

On a \textit{macroscopic} modeling level, the diffusion equation - a partial differential equation (PDE) - is used to describe the time evolution of the concentration of a population of molecules undergoing diffusion. This is a valid model if molecules are abundant. But in cellular regulatory networks, key proteins such as transcription factors are only present in low copy numbers and the deterministic PDE model becomes inaccurate. Experiments \cite{ELSS, KNWBST, MAA1, Met, MunskyNeuertOuden, RajOuden, Swain01102002} and theory \cite{GaMcWaMa, McQuarrie} have shown the importance of accounting for intrinsic noise when modelling cellular control systems.  Consequently, we need spatial stochastic simulation methods, and diffusion in particular is described by a random walk. We can distinguish between two levels of accuracy.

On the \textit{mesoscopic} level we use a discrete Brownian motion to
model the jump process of the molecules. The domain is partitioned
into compartments or voxels. The state of the system is the number of
molecules of each species in each voxel. Molecules can jump between
neighboring voxels and react if they are in the same voxel.  The
probability density function (PDF) for the probability to be in a
state at a certain time satisfies a master equation. If bimolecular
reactions are included there is in general no analytical solution for
the PDF and a numerical solution is difficult to compute due to the
high dimensionality of the state space. Instead, the stochastic
simulation algorithm (SSA) can be used to generate trajectories of the
system. It was first developed by Gillespie \cite{gillespie, GiHePe}
for reactions independent of space. Its efficiency has been improved
in \cite{CaoGilPet1, GibsonBruck} and it is extended to space
dependency with a Cartesian partitioning of the space in
\cite{ElEh04,HFE,IsaacsonPeskin}.  A more accurate description is the
space-continuous \textit{microscopic} level where individual molecules
are followed along their Brownian trajectories. Methods and software
for this approach are found in \cite{AnAdBrAr10, DBOGSK, MCell08,
  SchUllNoe, ZoWo5a}.

In this work, we focus on diffusion at the mesoscopic level. The
probability per unit of time for a molecule to jump from its voxel to
a neighbor is obtained by a discretization of the Laplace operator in
the diffusion equation on the same mesh. A mathematically equivalent
interpretation is that they are obtained from a discretization of the
Fokker-Planck equation for Brownian motion. The resulting matrix is
the generator of a Markov process and all the off-diagonal entries,
representing transition rates, need to be non-negative.  On the
macroscopic level of deterministic PDEs, requiring non-negative jump
coefficients enforces the discrete maximum principle \cite{Thomee,
  Varga}.  To represent the complicated geometries present in cells
(e.g. mitochondria or convoluted membranes), we work with unstructured
meshes, meaning triangular or tetrahedral meshes in 2D and 3D. Many
interesting cellular processes happen on the membranes. Using a vertex
centred discretization allows us to couple diffusion in the bulk to
diffusion on the surface of the domain in a straightforward way. If a
molecule reaches a boundary node we can use the surface mesh for its
2D diffusion while bound to the membrane.

Piecewise linear finite elements on unstructured meshes are used in \cite{EnFeHeLo} to obtain the jump propensities. Mature software exists for discretizing PDEs with the finite element method (FEM) on a given mesh, e.g. \cite{Fenics}. In 2D, mesh generators are usually able to provide high quality meshes \cite{ErUn}, but in 3D the mesh quality decreases and negative off-diagonal elements often appear in the FEM discretization matrix \cite{BrKoKrSo, Kieri, KoKrNe00}. This matrix can then no longer be interpreted as the generator matrix to a Markov process and thus provide transition rates. For our application in stochastic simulations in systems biology, we need to modify this discretization matrix to guarantee non-negative jump coefficients. In \cite{EnFeHeLo} the negative entries are set to zero and the diagonal element is recalculated so that the row sum equals zero. This changes the diffusion speed and leads to errors in, for example, the time a signal needs to propagate from the nucleus to the cell membrane. To address this, we have in previous work \cite{Lotstedt2015} developed a method that preserves mean first passage times. Another approach to obtain non-negative jump coefficients is to use the finite volume method (FVM). But as we will see, the vertex centered FVM scheme does not approximate diffusion more accurately than a filtered FEM  discretization for typical meshes, despite positive coefficients. These methods make the stochastic simulation of diffusion mesh dependent, but this also holds for the accuracy of the numerical solution of the PDE. FEM and FVM coefficients have been modified in \cite{BuErn, GaWu15, ShYu12} to be non-negative but they depend on the PDE solution making them unsuitable for stochastic simulation.

In this paper we analyse the error introduced by modifying the discretization matrix to enforce non-negative jump coefficients. Since the concentration of the species simulated by the SSA converges towards the solution of the diffusion equation, \cite{Kur70, Kur71}, we quantify the error in this deterministic limit. We use backward analysis to find the diffusion equation solved by the new discretization matrix. We study two error estimates: the \emph{backward error} describing the difference in the diffusion in the equations and the \emph{forward error} describing the error in the solutions to the equations.
The analysis suggests a new method to obtain a non-negative discretization by minimizing the backward error.

In Section~\ref{sec:mesodiff} we describe the mesoscopic model and how
the jump coefficients are obtained for unstructured triangular and
tetrahedral meshes. In Section~\ref{sec:ErrorEstimate}, we develop
theory to bound the forward error by the backward error. An algorithm
is provided in Section~\ref{sec:analysis} for calculating the backward
error and then we show how new error minimizing jump coefficients can
be computed in Section~\ref{sec:design}. In the experiments in
Section~\ref{sec:numres}, we analyse the errors numerically, test our
new method, observe that it also minimizes the forward error in
agreement with the estimates in Section~\ref{sec:ErrorEstimate}, and
discuss possibilities for a practical implementation. Final
conclusions are drawn in Section~\ref{sec:concl}.

Vectors and matrices are written in boldface. A vector $\fatu$ has the
components $u_i$ and the elements of a matrix $\fatA$ are
$A_{ij}$. For vectors and matrices $\|\fatu\|_p$ denotes the vector
norm in $\ell_p$ and $\|\fatA\|_p$ its subordinate matrix norm, and
$\|\fatu\|_{\fatA}^2=\fatu^T\fatA\fatu$ for a positive definite
$\fatA$. The derivative of a variable $u$ with respect to time $t$ is
written $u_t$. If $\fatu(\fatx),\, \fatx\in\Omega,$ varies in space,
then $\|\fatu\|_{L^p}^p=\int_\Omega \|\fatu\|^p_p\, d\Omega$.


\section{Mesoscopic model of diffusion}\label{sec:mesodiff}

To model diffusion in discrete space, the domain of interest $\Omega$ is partitioned into non-overlapping voxels $\calV_k, \; k=1,\ldots, N$. 
Each voxel $\calV_j$ has a node or vertex with the coordinates $\fatx_j$ in the interior, see Fig.~\ref{fig:Discretization}. 
If $\calV_i$ and $\calV_j$ are neighbors, the vertices $\fatx_i$ and $\fatx_j$ are connected by the edge $\fate_{ij}$. Molecules in a voxel $\calV_j$ can diffuse by jumps to a
neighboring voxel $\calV_i$ along the edge $e_{ij}$. 

\begin{figure}[H]
\centering
\includegraphics[scale=.25]{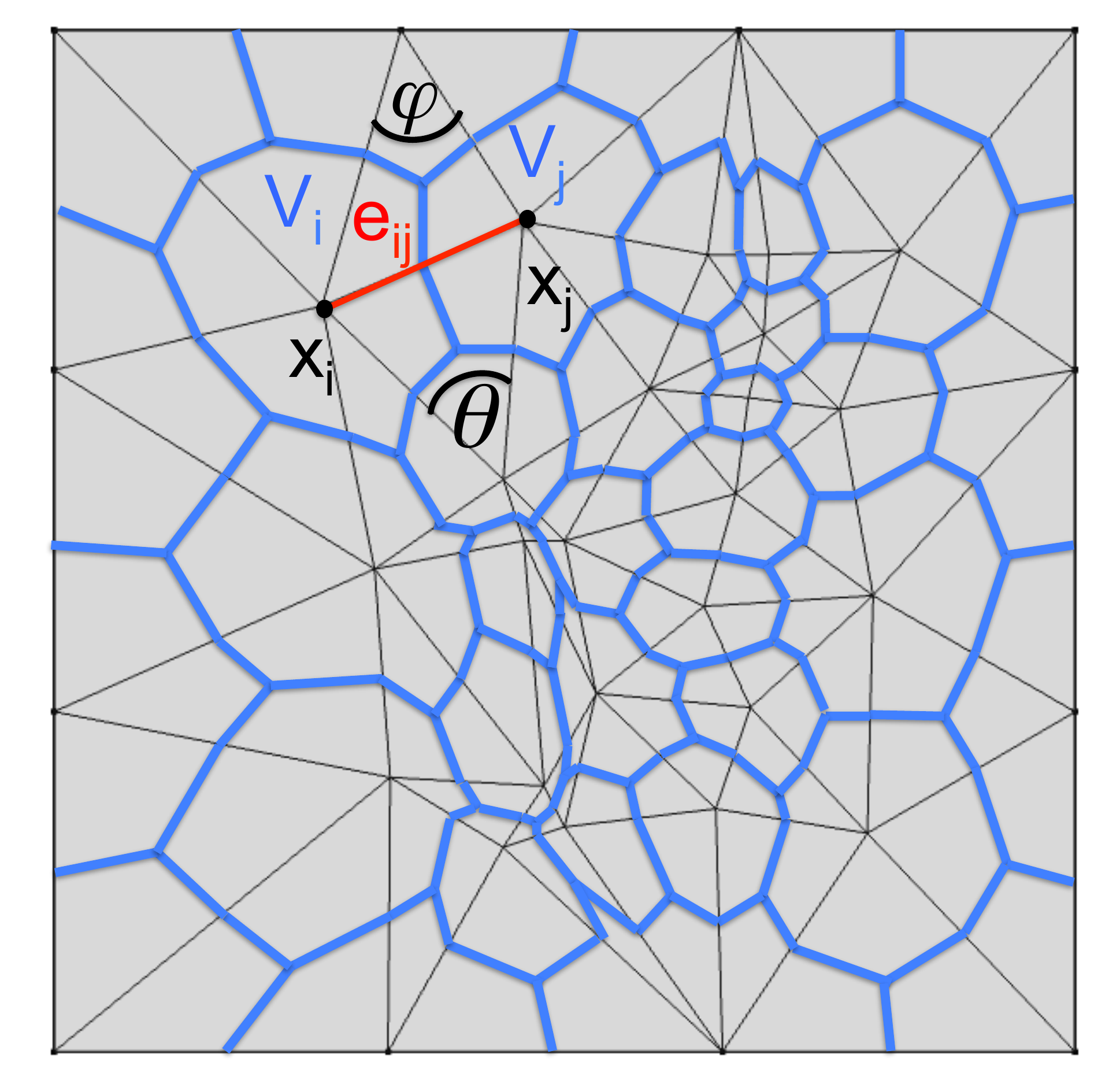} 
\caption{The primal triangular mesh (black), defining the edges $e_{ij}$, and the dual mesh (blue) defining the voxels $\calV_i$.}
\label{fig:Discretization}
\end{figure}

The state of the system is the discrete number of molecules of each species in each voxel.
Let $y_j$ be the number of molecules of chemical species $Y$ in $\calV_j$. 
The jump rate $\lambda_{ji}$ from $\calV_j$ to a neighboring $\calV_i$ needs to satisfy the condition
\begin{equation}
\lambda_{ji}\geq 0.
\label{eq:Positive}
\end{equation}
The total jump rate out of voxel $\calV_j$ is $\lambda_j=\sum_{i,i\neq j}\lambda_{ji}$.
The next time for a jump from $\calV_j$ is exponentially distributed with the
intensity $\lambda_jy_j$. 
Voxel $\calV_i$ is chosen as the destination with a probability proportional to $\lambda_{ji}$. 
After a jump, the number of molecules is updated and the time for a new jump is determined.
This is the Stochastic Simulation Algorithm (SSA) by Gillespie \cite{gillespie} to simulate mesoscopic diffusion of molecules between the voxels. 

\begin{algorithm}[H]
\caption{Stochastic Simulation Algorithm for Diffusion \cite{gillespie}}
\label{alg:SSA}
\begin{algorithmic}[1]
\State Initialize $y_k, k=1,\ldots,N,$ in the $N$ voxels at $t=0$.
\State Sample the exponentially distributed time $t_k$ with rate $\lambda_ky_k$ to the first diffusion event
    in all $N$ voxels.
\State Let $t_j$ be the minimum of all $t_k$. If $t_j> T$ then stop, otherwise continue.
\State For the jump from $\calV_j$, sample a jump to $\calV_i$ with probability $\lambda_{ji}/\lambda_j$.
\State Update $t:=t_j$, $y_i$ and $y_j$. Sample $\Delta t_i$ with the rate $\lambda_iy_i$ and $\Delta t_j$ with the rate $\lambda_jy_j$ and 
    recompute $t_i=t+\Delta t_i$ and $t_j=t+\Delta t_j$. Go to 3.
\end{algorithmic}
\end{algorithm}

We will now show how to derive the propensities $\lambda_{ij}$ from a discretization of the diffusion equation.
Let $\faty$ be a vector with entries $y_i$, describing the number of Y molecules in voxel $\calV_i$.
The probability distribution $p(\faty,t)$ for the distribution of the molecules is the solution to the diffusion-master equation 
\begin{equation}
p_t(\faty,t) = \sum_{i=1}^N\sum_{j=1}^N\lambda_{ij}(\faty-\fatmu_{ij})p(\faty-\fatmu_{ij},t)-\lambda_{ij}(\faty)p(\faty,t),
\end{equation}
where $\mu_{ij,i}=-1$, $\mu_{ij,j}=1$ and $\mu_{ij,k}=0$ for $k\neq i,j$. Calculating the expected value $\overline{y}_i$  of the number of molecules in each voxel $i$ leads to a system of ordinary differential equations (ODEs) for the mean concentration $u_i=\overline{y}_i/|\calV_i|$ in each voxel $\calV_i$
\begin{equation}
u_{it} = \sum_{j=1}^N\frac{|\calV_j|}{|\calV_i|}\lambda_{ji}u_j-u_i\sum_{j=1}^N\lambda_{ij},
\end{equation}
see \cite{EnFeHeLo}. This can be interpreted as a discretization of the diffusion equation
\begin{align}\label{eq:ucont1}
u_t &= \gamma\Delta u = \nabla\cdot(\fatgamma\nabla u), &\fatx\in\Omega,\,t\geq 0,\\
\fatn\cdot\nabla u &= 0, &\fatx\in\partial\Omega,\,t\geq 0,\nonumber\\
u &= u_0, &\fatx\in\Omega,\,t=0,\nonumber
\end{align}
with the diffusion coefficient $\gamma$ and $\fatgamma=\gamma\fatI$.
Thus, the jump rates $\lambda_j$ and $\lambda_{ji}$ can be computed using discretizations of the diffusion equation on the triangular 
or tetrahedral mesh. 
The space derivatives in \eqref{eq:ucont1} are approximated in the voxels by $\fatD$ to obtain equations for the
unknowns $u_i$
\begin{align}
 \label{eq:udisc}
 \fatu_t &= \fatD \fatu.
\end{align}
A discretization with FEM using piecewise linear Lagrangean basis and test functions yields a
mass matrix $\fatM$ and a stiffness matrix $\fatS$. 
The diagonal $\fatA$ is obtained after mass lumping of $\fatM$. 
The diagonal elements are $A_{jj}=|\calV_j|$. Then the system matrix in \eqref{eq:udisc} is
\begin{equation}\label{eq:Ddef}
  \fatD=\fatA^{-1} \fatS.
\end{equation}
Let $h$ be a measure of the mesh size. The solution of \eqref{eq:udisc}
converges to the solution of \eqref{eq:ucont1} when $h\rightarrow 0$ and
the difference between them is of $\calO(h^2)$. 
If the off-diagonal elements $D_{ij}$ in $\fatD$ are non-negative, then these are taken as the jump
coefficients $\lambda_{ji}$ in the SSA in Algorithm~\eqref{alg:SSA} scaled by the volumes of the voxels $|\calV_i|$ and $|\calV_j|$
\begin{equation}
  \displaystyle{\lambda_{ji}=D_{ij}\frac{|\calV_i|}{|\calV_j|}=\frac{S_{ij}}{|\calV_j|}},
\label{eq:lambdadef}
\end{equation}
see \cite{EnFeHeLo}. 
The concentrations $y_j/|\calV_j|$ computed by the SSA converge in the limit of large numbers of molecules
to the concentrations in \eqref{eq:udisc} by \cite{Kur70, Kur71}.

In 2D, the entry $S_{ij}$ corresponding to edge $e_{ij}$ is
\begin{equation}\label{eq:S2D}
S_{ij} = \sin(\varphi+\theta)/2\sin(\varphi)\sin(\theta),
\end{equation}
where $\varphi$ and $\theta$ are the two angles opposing $e_{ij}$ \cite{XuZi}, see Fig.~\ref{fig:Discretization}. If $\varphi+\theta>\pi$ then $S_{ij}<0$ and we can no longer use it to define a jump propensity. A similar condition exists in 3D \cite{XuZi}. Mesh generators in 2D are usually able to construct meshes leading to positive $S_{ij}$ \cite{ErUn}, but in 3D negative off-diagonal entries often occur \cite{Kieri}. The extra requirement in systems biology to have non-negative off-diagonal elements in the stiffness matrix is a sufficient but not necessary condition to fulfill the discrete maximum principle when solving the PDE~\eqref{eq:ucont1} numerically \cite{Thomee, Varga}.

We now present three different methods of modifying the stiffness matrix $\fatS$ or the discretization matrix $\fatD$ containing off-diagonal negative coefficients, so that we can interpret them as the generator matrix of the Markov process simulated by the SSA.
The discretization matrix $\fatD$ is modified to $\tfatD$ in \cite{EnFeHeLo} such that, if $D_{ij}<0$, then 
$\tD_{ij}= 0$ and $\tD_{ii}= -\sum_{j=1}^{n_i} \tD_{ij}$, where $n_i$ is the number of edges leaving vertex $i$. 
This method of calculating 
the jump coefficients by eliminating the negative contributions is denoted here by nnFEM, non-negative FEM.
Convergence of the solution to the equation with the diffusion operator $\gamma\Delta$ is lost but non-negative jump coefficients are defined.
Solving the system of equations
\begin{equation}
    \tilde{\fatu}_{ht} = \fatA^{-1}\tfatS\fatu_h= \tfatD \fatu_h,
\label{eq:udiscdisturb}
\end{equation}
however, can be viewed as a discrete approximation to a perturbed diffusion equation
\begin{equation}
\tilde{u}_t = \nabla\cdot(\tilde{\fatgamma}\nabla\tilde{u}).
\label{eq:ucontdisturb1}
\end{equation}
The diffusion matrix $\tilde{\fatgamma}$ belongs to $\mathbb{R}^{2\times 2}$ in 2D and $\mathbb{R}^{3\times 3}$ in 3D and is symmetric and should be 
positive definite
for all $\fatx$. If $\tilde{\fatgamma}$ is only positive semidefinite it has at least one eigenvalue equal to zero which means that there is no diffusion along the direction of the corresponding eigenvector.
This is unrealistic to happen inside living cells and we do not consider this case, although the following analysis can be generalized to the positive semidefinite case.

Another option is to choose a straightforward finite volume method (FVM).  
If the boundary $\pV_j$ of a voxel $\calV_j$ consists of $n_j$ straight segments (2D) or flat faces (3D)
$\pV_{ji}, i=1,\ldots,n_j,$ 
of length or area $|\pV_{ji}|$ 
with normal $\fatn_{ji}$ of unit length, then 
\begin{equation}
  \displaystyle{\int_{\calV_j} \nabla\cdot(\fatgamma\nabla u)\, dv=
  \int_{\partial \calV_j} \fatn\cdot \fatgamma\nabla u\, ds
            \approx \sum_{i=1}^{n_j} \fatn_{ji}\cdot\fatgamma\,\fate_{ji}(u_i-u_j) \frac{|\pV_{ji}|}{\|\fate_{ji}\|_2^2}}
\label{eq:FVLapl}
\end{equation}
and the stiffness matrix in \eqref{eq:udiscdisturb} is
$\tilde{S}_{ji}=\fatn_{ji}\cdot\fatgamma\,\fate_{ji}{|\pV_{ji}|}/{\|\fate_{ji}\|_2^2}$.
The elements in $\tfatS$ derived from \eqref{eq:FVLapl} are always
non-negative and hence the $\lambda_{ji}$ in \eqref{eq:lambdadef}
defined by the FVM are non-negative.  The $N$ components of $\fatu$
represent the average value of $u$ in the voxels.  However, the
solution of \eqref{eq:udisc} may not converge to the solutions of
\eqref{eq:ucont1} when the mesh size $h$ is reduced \cite{SvGoNo},
since the approximation in \eqref{eq:FVLapl} is consistent with
$\gamma\Delta u$ only if the mesh is of Voronoi type \cite{EyGaHe}.
But we can again interpret $\tilde{D}_{ij}={\tS_{ij}}/{|\calV_i|}$ as
a consistent FEM discretization of the perturbed equation
\eqref{eq:ucontdisturb1}.  The scheme in \eqref{eq:FVLapl} is a vertex
centered FVM. A cell centered FVM is used in \cite{STEPS} to define
the jump coefficients.

In \cite{Lotstedt2015}, the jump coefficients  are chosen to be close to the FEM coefficients in \eqref{eq:Ddef}.
If $D_{ij}$ is non-negative, then $\lambda_{ji}$ is as in \eqref{eq:lambdadef}. 
If $D_{ij}<0$, then the $\lambda_{ji}$ coefficients for voxel $j$ are determined 
such that the mean first exit time $\exit_j$ from a vertex $j$ in the mesh to the boundary $\pOmega$ 
is a solution of the system of linear equations
\begin{equation}
     \tfatD\fatexit =-\fate,\quad \fate^T=(1,1,\ldots,1),
\label{eq:ET}
\end{equation}  
where $\tD_{ij}=\lambda_{ji}|\calV_j|/|\calV_i|$. The mean first exit time is the expected time it takes for a molecule initially at a position
inside $\Omega$ to reach $\pOmega$. With these coefficients in Algorithm~\ref{alg:SSA}, 
the average of the simulated first exit times from vertices in the
mesh agree very well in \cite{Lotstedt2015} with the ones computed numerically with a FEM discretization of \eqref{eq:ET}. 
This method of computing the jump coefficients is based on the global first exit time of the molecules and is denoted by GFET.


\section{Analysis}
\label{sec:ErrorEstimate}

Previously, we presented three methods that can be regarded as
modifications of the FEM discretization matrix $\fatD$ into a matrix
$\tfatD$ with non-negative jump coefficients. In this section we view
$\tfatD$ as a FEM discretization of a certain perturbed PDE
\eqref{eq:ucontdisturb2} below. To quantify the error introduced by
the change in the diffusion matrix we therefore aim at bounding the
difference between the solutions to the PDEs
\begin{align}
 \label{eq:ucont2}
 u_t &=
 \gamma\Delta u,\\
 \label{eq:ucontdisturb2}
 \tilde{u}_t &=
 \nabla\cdot(\tfatgamma\nabla\tilde{u}),
\end{align}
for $\fatx \in \Omega$, with homogeneous Neumann boundary conditions
$\partial u/\partial n = \partial \tilde{u}/\partial n = 0$ for $\fatx \in
\partial \Omega$,
and initial data $u_0 = \tilde{u}_0$ at $t=0$. Here
$\tfatgamma(\fatx)$ is a symmetric, \emph{uniformly} positive definite matrix.  
The mean first exit time used to define the GFET
algorithm fulfills Poisson's equation, \cite{Oksendal}
\begin{align}
 \label{eq:Econt}
 -1 &= \gamma\Delta \exit, 
 \end{align}
 with the corresponding perturbed equation
 \begin{align}
 \label{eq:Econtdisturb}
 -1 &= \nabla\cdot(\tilde{\fatgamma}\nabla\tilde{\exit})
\end{align}
and homogeneous Dirichlet boundary condition.

Let $H^{1}(\Omega)$ be the Hilbert space of all functions $u\in
L^2(\Omega)$ for which the first weak derivative exists and lies in
$L^2(\Omega)$.  The corresponding weak problems for \eqref{eq:ucont2}
and \eqref{eq:ucontdisturb2} are \emph{find $u, \tilde{u} \in
  H^{1}(\Omega)$ such that for $\forall v \in H^{1}(\Omega)$},
\begin{align}
 \label{eq:uweak}
 (v,u_{t}) &= -(\nabla v,\gamma\nabla u), \\
 \label{eq:uweakdisturb}
 (v,\tilde{u}_{t}) &= 
 -(\nabla v,\tfatgamma\nabla \tilde{u}).
\end{align}

For finite element solutions in a finite dimensional subspace
$H^{1}_{h}(\Omega) \subset H^{1}(\Omega)$ we have the set of 
equations (see also \eqref{eq:udisc} and \eqref{eq:udiscdisturb}
above)
\begin{align}
 \label{eq:udisc2}
 \fatM \fatu_{t} &= \fatS \fatu,\\
 \label{eq:udiscdisturb2}
 \fatM \tilde{\fatu}_{t} &= \tfatS \tilde{\fatu}.
\end{align}

We shall require the following basic \textit{a priori} estimates

\begin{lemma}
  \label{lemma:apriori}
  For some $C > 0$,
  \begin{align}
    \label{eq:apriori1_L2}
    \|\nabla u\|_{L^2} &\le \|\nabla u_{0}\|_{L^2}, \\
    \label{eq:apriori2}
    \|\nabla \tilde{u}\|_{L^2} &\le C\|\nabla u_{0}\|_{L^2}, 
  \end{align}
  where \eqref{eq:apriori2} assumes that $\tfatgamma$ is uniformly
  positive definite such that
  $g\|\faty\|_2^2 \leq \faty^T\tilde{\gamma}(\fatx)\faty\leq
  G\|\faty\|_2^2$,
  for some positive constants $(g,G)$,
  $\forall\fatx\in\Omega \subset \mathbb{R}^{d}$,
  $\forall\faty\in\mathbb{R}^d$, where $d$ is the dimension.
\end{lemma}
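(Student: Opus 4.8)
The plan is to prove both inequalities as standard parabolic energy estimates, obtained by testing the weak formulations \eqref{eq:uweak} and \eqref{eq:uweakdisturb} with the time derivative of the respective solution. For \eqref{eq:apriori1_L2} I would set $v = u_t$ in \eqref{eq:uweak}, which gives $\|u_t\|_{L^2}^2 = -(\nabla u_t,\gamma\nabla u) = -\tfrac{\gamma}{2}\tfrac{d}{dt}\|\nabla u\|_{L^2}^2$. Hence $\tfrac{d}{dt}\|\nabla u\|_{L^2}^2 \le 0$, and integrating from $0$ to $t$ together with $u(\cdot,0)=u_0$ yields \eqref{eq:apriori1_L2}; in other words the Dirichlet energy is a Lyapunov functional for the heat flow. (Note that testing with $v=u$ would only control $\|u\|_{L^2}$ and $\int_0^t\|\nabla u\|_{L^2}^2$, not $\|\nabla u\|_{L^2}$ pointwise in $t$, so the choice $v=u_t$ is the right one.)

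For \eqref{eq:apriori2} I would likewise take $v = \tilde{u}_t$ in \eqref{eq:uweakdisturb}. Since $\tfatgamma=\tfatgamma(\fatx)$ is symmetric and independent of $t$, one has $(\nabla \tilde{u}_t,\tfatgamma\nabla\tilde{u}) = \tfrac12\tfrac{d}{dt}(\nabla\tilde{u},\tfatgamma\nabla\tilde{u})$, so $\|\tilde{u}_t\|_{L^2}^2 = -\tfrac12\tfrac{d}{dt}(\nabla\tilde{u},\tfatgamma\nabla\tilde{u}) \le 0$, i.e. the weighted energy $(\nabla\tilde{u},\tfatgamma\nabla\tilde{u})$ is non-increasing. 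The uniform positive definiteness of $\tfatgamma$ then closes the estimate:
\[
 g\,\|\nabla\tilde{u}\|_{L^2}^2 \;\le\; (\nabla\tilde{u},\tfatgamma\nabla\tilde{u}) \;\le\; (\nabla\tilde{u}_0,\tfatgamma\nabla\tilde{u}_0) \;\le\; G\,\|\nabla\tilde{u}_0\|_{L^2}^2 \;=\; G\,\|\nabla u_0\|_{L^2}^2,
\]
using $\tilde{u}_0 = u_0$. This gives \eqref{eq:apriori2} with $C=\sqrt{G/g}$, which is consistent with \eqref{eq:apriori1_L2}: in the scalar case $\tfatgamma=\gamma\fatI$ we have $g=G=\gamma$ and $C=1$.

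The one point that needs care — and the main obstacle — is the legitimacy of choosing $v=u_t$ (resp. $v=\tilde{u}_t$) as a test function, since this requires $u_t(\cdot,t)\in H^1(\Omega)$, i.e. parabolic smoothing beyond the $H^1$ regularity of the data. I would make this rigorous by carrying out the computation first on the finite-dimensional semidiscrete systems \eqref{eq:udisc2} and \eqref{eq:udiscdisturb2}, where $\fatu_t$ and $\tilde{\fatu}_t$ automatically lie in $H^1_h(\Omega)$ and the mass and stiffness matrices $\fatM$, $\fatS$, $\tfatS$ make the manipulations purely algebraic; the resulting bounds are uniform in $h$, and one then passes to the limit $h\to 0$ using the weak lower semicontinuity of the $L^2$ norm of the gradient. (Alternatively, one invokes standard parabolic regularity and an approximation/mollification argument in time.) I would present the Galerkin version, since the semidiscrete systems are already available in the paper and the argument there is completely elementary.
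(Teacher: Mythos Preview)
Your proposal is correct and follows essentially the same approach as the paper: test with $v=\tilde{u}_t$, use the symmetry of $\tfatgamma$ to write $(\nabla\tilde{u}_t,\tfatgamma\nabla\tilde{u})=\tfrac{1}{2}\tfrac{d}{dt}(\nabla\tilde{u},\tfatgamma\nabla\tilde{u})$, integrate, and then invoke the bounds $g\|\cdot\|^2\le(\cdot,\tfatgamma\cdot)\le G\|\cdot\|^2$ to arrive at $C=\sqrt{G/g}$. The paper treats the scalar case \eqref{eq:apriori1_L2} as immediate and does not comment on the regularity needed to take $v=\tilde{u}_t$; your discussion of that point (via the Galerkin semidiscretization) is a welcome addition rather than a deviation.
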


\begin{proof}
  The case \eqref{eq:apriori1_L2} of a scalar $\gamma$ follows
  straightforwardly so we focus on \eqref{eq:apriori2}. Letting $v =
  \tilde{u}_{t}$ in \eqref{eq:uweakdisturb} we arrive at
  \begin{align*}
    \|\tilde{u}_{t}\|_{L^2}^{2} &= 
    -(\nabla \tilde{u}_{t},\tfatgamma\nabla \tilde{u})
    = -\frac{1}{2} \frac{d}{dt} 
    (\nabla \tilde{u},\tfatgamma\nabla \tilde{u}), \\
    \intertext{since $\tfatgamma$ is symmetric. Integrating we get}
    (\nabla \tilde{u},\tfatgamma\nabla \tilde{u}) &\le 
    (\nabla \tilde{u}_{0},\tfatgamma\nabla \tilde{u}_{0}).
  \end{align*}
  Invoking the definiteness of $\tfatgamma$ we arrive at
  \begin{align*}
    g\|\nabla \tilde{u}\|_{L^2}^{2} &\le G\|\nabla \tilde{u}_{0}\|_{L^2}^{2}.
  \end{align*}  
\end{proof}

We now bound the error in the two solutions $u$ and $\tilde{u}$.
\begin{theorem}
\label{thm:BoundUInfty}
 For some constant $C > 0$,
 \begin{align}
   \|\tilde{u}-u\|_{L^2}^{2} &\le C t \|\fatgamma-\tfatgamma\|_{\infty}
   \|\nabla u_{0}\|_{L^2}^{2},
   \label{eq:FEUInfty}
 \end{align}
 where the norm of the difference of the diffusion rates is defined
 by
 \begin{align}
   \|\fatgamma-\tilde{\fatgamma}\|_{\infty} & \vcentcolon =
   \max_{x\in\Omega}\|\fatgamma-\tilde{\fatgamma}\|_2.
   \label{eq:GlobalErrorInfty}
 \end{align}
\end{theorem}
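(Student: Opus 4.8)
The plan is to work with the error $e \vcentcolon= \tilde u - u$, which inherits the homogeneous Neumann condition and satisfies $e(0) = 0$. Subtracting \eqref{eq:ucont2} from \eqref{eq:ucontdisturb2} and adding and subtracting $\nabla\cdot(\tfatgamma\nabla u)$, I would write the evolution equation for $e$ as
\begin{equation*}
 e_t = \nabla\cdot(\tfatgamma\nabla e) + \nabla\cdot\big((\tfatgamma-\fatgamma)\nabla u\big),
\end{equation*}
i.e. the perturbed diffusion operator applied to $e$, plus a forcing term driven by the coefficient mismatch $\tfatgamma-\fatgamma$ acting on the (already controlled) solution $u$. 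In weak form this reads $(v,e_t) = -(\nabla v,\tfatgamma\nabla e) - (\nabla v,(\tfatgamma-\fatgamma)\nabla u)$ for all $v\in H^1(\Omega)$.

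Next I would run the standard energy argument, exactly as in the proof of Lemma~\ref{lemma:apriori}, by choosing $v = e$. This gives $\tfrac12\tfrac{d}{dt}\|e\|_{L^2}^2 = -(\nabla e,\tfatgamma\nabla e) - (\nabla e,(\tfatgamma-\fatgamma)\nabla u)$. The first term on the right is bounded below by $g\|\nabla e\|_{L^2}^2$ by uniform positive definiteness of $\tfatgamma$, and the cross term is handled with Cauchy--Schwarz together with the pointwise inequality $\|(\tfatgamma-\fatgamma)\nabla u\|_2 \le \|\fatgamma-\tfatgamma\|_\infty\|\nabla u\|_2$, followed by Young's inequality with weight $g$ to peel off $\tfrac{g}{2}\|\nabla e\|_{L^2}^2$. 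The gradient contributions then partly cancel the coercive term, leaving $\tfrac{d}{dt}\|e\|_{L^2}^2 \le \tfrac1g\|\fatgamma-\tfatgamma\|_\infty^2\|\nabla u\|_{L^2}^2$, and I would invoke the a priori bound \eqref{eq:apriori1_L2} to replace $\|\nabla u\|_{L^2}$ by $\|\nabla u_0\|_{L^2}$. Integrating in $t$ and using $e(0)=0$ yields $\|e(t)\|_{L^2}^2 \le \tfrac{t}{g}\|\fatgamma-\tfatgamma\|_\infty^2\|\nabla u_0\|_{L^2}^2$.

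To match the statement exactly I would finally note that the mismatch is uniformly bounded: since $\|\fatgamma\|_2 = \gamma$ and the eigenvalues of $\tfatgamma(\fatx)$ lie in $[g,G]$, one has $\|\fatgamma-\tfatgamma\|_\infty \le \gamma+G$, so one power of $\|\fatgamma-\tfatgamma\|_\infty$ in the estimate above can be absorbed into the constant, giving \eqref{eq:FEUInfty} with, say, $C = (\gamma+G)/g$.

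The step I expect to be the main obstacle is the cross term $(\nabla e,(\tfatgamma-\fatgamma)\nabla u)$: because it carries $\nabla e$ rather than $e$, it cannot be bounded in terms of $\|e\|_{L^2}$ alone, so one is forced to keep the coercive term $(\nabla e,\tfatgamma\nabla e)$ — this is precisely where uniform positive definiteness of $\tfatgamma$ is indispensable — and the Young splitting then unavoidably produces $\|\fatgamma-\tfatgamma\|_\infty^2$, which has to be reconciled with the first power in \eqref{eq:FEUInfty} as above. An alternative that sidesteps \eqref{eq:apriori1_L2} is to peel the mismatch off around $\tilde u$ instead, $e_t = \gamma\Delta e + \nabla\cdot((\tfatgamma-\fatgamma)\nabla\tilde u)$, and use \eqref{eq:apriori2}; the computation and the conclusion are the same.
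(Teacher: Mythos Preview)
Your argument is correct. The route differs from the paper's in how the troublesome factor $\|\nabla e\|_{L^2}$ in the cross term is eliminated. The paper writes the error equation with the \emph{scalar} coercive term, $\tfrac12\tfrac{d}{dt}\|v\|_{L^2}^2 = -\gamma\|\nabla v\|_{L^2}^2 + (\nabla v,(\fatgamma-\tfatgamma)\nabla\tilde u)$, simply drops $-\gamma\|\nabla v\|_{L^2}^2\le 0$, and then bounds $\|\nabla v\|_{L^2}\le \|\nabla u\|_{L^2}+\|\nabla\tilde u\|_{L^2}$ by the triangle inequality, invoking \emph{both} parts of Lemma~\ref{lemma:apriori}. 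This yields the first power $\|\fatgamma-\tfatgamma\|_\infty$ directly, with no Young step and no absorption of a second factor into the constant. Your approach instead keeps the coercive term (with $\tfatgamma$, needing the lower bound $g$ explicitly), uses Young's inequality to cancel $\|\nabla e\|_{L^2}^2$, and then only needs \eqref{eq:apriori1_L2}; the price is the square $\|\fatgamma-\tfatgamma\|_\infty^2$, which you correctly reduce to a first power via $\|\fatgamma-\tfatgamma\|_\infty\le\gamma+G$. Both are standard energy arguments; the paper's is slightly shorter and gives the stated dependence on $\|\fatgamma-\tfatgamma\|_\infty$ without the extra bookkeeping, while yours avoids appealing to \eqref{eq:apriori2}. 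Your ``alternative'' decomposition around $\tilde u$ is in fact exactly the paper's starting point.
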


\begin{proof}
 Subtracting \eqref{eq:uweak} from \eqref{eq:uweakdisturb} and using
 $v = \tilde{u}-u$ we get
 \begin{align*}
   \frac{1}{2} \frac{d}{dt} \|v\|_{L^2}^{2} &= -\gamma\|\nabla v\|_{L^2}^{2}+
   (\nabla v,(\fatgamma-\tfatgamma)\nabla \tilde{u})
   \leq \int_{\Omega} |\nabla v^T(\fatgamma-\tfatgamma)\nabla\tilde{u}|d\Omega\\
   &\leq \int_{\Omega}\|\fatgamma-\tfatgamma\|_2\|\nabla v\|_2\|\nabla\tilde{u}\|_2d\Omega
   \leq \max_{x\in\Omega}\|\fatgamma-\tfatgamma\|_2\int_{\Omega}\|\nabla v\|_2\|\nabla\tilde{u}\|_2d\Omega\\
   &\leq\|\fatgamma-\tfatgamma\|_{\infty}\|\nabla v\|_{L^2} \|\nabla\tilde{u}\|_{L^2}  \leq \|\fatgamma-\tfatgamma\|_{\infty}(\|\nabla u\|_{L^2}+\|\nabla\tilde{u}\|_{L^2}) \|\nabla\tilde{u}\|_{L^2}  \\
   &\leq C\|\fatgamma-\tfatgamma\|_{\infty}\|\nabla u_{0}\|_{L^2}^{2}
\end{align*}
using Lemma \ref{lemma:apriori}. 
The estimate \eqref{eq:FEUInfty}
follows by integration of the inequality.
\end{proof}

This shows that the forward error $\|u-\tilde{u}\|_{L^2}$ is bounded.
Using the maximum norm of the difference between the two diffusion
constants as in \eqref{eq:FEUInfty} is, however, pessimistic and we
now instead use the mean value of $\|\fatgamma-\tfatgamma(\fatx)\|_2$
over $\Omega$ to bound the error in the solutions.
\begin{proposition}
\label{thm:BoundU}
 For some constant $C > 0$,
 \begin{equation}
   \frac{d}{dt} \|\tilde{u}-u\|_{L^2}^{2} \le C \|\fatgamma-\tfatgamma\|_\ast(\|\nabla u\|_{L^4}+\|\nabla\tilde{u}\|_{L^4})\|\nabla\tilde{u}\|_{L^4},
   \label{eq:FEU}
 \end{equation}
 where
 \begin{align}
   \|\fatgamma-\tilde{\fatgamma}\|_\ast^2 & \vcentcolon=
   \frac{1}{|\Omega|}\int_{\Omega}\|\fatgamma -\tilde{\fatgamma}\|_2^2\,d\Omega.
   \label{eq:GlobalError}
 \end{align}
\end{proposition}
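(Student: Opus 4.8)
The plan is to follow the same energy argument as in the proof of Theorem~\ref{thm:BoundUInfty}, but to postpone the application of the Cauchy--Schwarz inequality so that the spatial integral of $\|\fatgamma-\tfatgamma\|_2$ against the gradient factors can be split by H\"older's inequality rather than bounded crudely by the maximum. First I would subtract \eqref{eq:uweak} from \eqref{eq:uweakdisturb}, set $v = \tilde u - u$, and obtain exactly as before
\begin{equation*}
  \tfrac{1}{2}\tfrac{d}{dt}\|v\|_{L^2}^2
  = -\gamma\|\nabla v\|_{L^2}^2 + (\nabla v,(\fatgamma-\tfatgamma)\nabla\tilde u)
  \le \int_\Omega \|\fatgamma-\tfatgamma\|_2\,\|\nabla v\|_2\,\|\nabla\tilde u\|_2\, d\Omega ,
\end{equation*}
discarding the nonpositive term $-\gamma\|\nabla v\|_{L^2}^2$.

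Next, instead of pulling out $\max_{x\in\Omega}\|\fatgamma-\tfatgamma\|_2$, I would treat the integrand as a product of three factors and apply the generalized H\"older inequality with exponents $(2,4,4)$, since $\tfrac12+\tfrac14+\tfrac14 = 1$:
\begin{equation*}
  \int_\Omega \|\fatgamma-\tfatgamma\|_2\,\|\nabla v\|_2\,\|\nabla\tilde u\|_2\, d\Omega
  \le \Big(\int_\Omega \|\fatgamma-\tfatgamma\|_2^2\, d\Omega\Big)^{1/2}
      \|\nabla v\|_{L^4}\,\|\nabla\tilde u\|_{L^4}.
\end{equation*}
Recognizing the first factor as $|\Omega|^{1/2}\|\fatgamma-\tfatgamma\|_\ast$ from the definition \eqref{eq:GlobalError}, and using the triangle inequality $\|\nabla v\|_{L^4}\le \|\nabla u\|_{L^4}+\|\nabla\tilde u\|_{L^4}$, gives
\begin{equation*}
  \tfrac{1}{2}\tfrac{d}{dt}\|v\|_{L^2}^2
  \le |\Omega|^{1/2}\,\|\fatgamma-\tfatgamma\|_\ast\,
      \big(\|\nabla u\|_{L^4}+\|\nabla\tilde u\|_{L^4}\big)\,\|\nabla\tilde u\|_{L^4},
\end{equation*}
which is \eqref{eq:FEU} after absorbing $2|\Omega|^{1/2}$ into the constant $C$.

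The subtle point, and the only place where some care is needed, is that the three-factor H\"older step requires $\|\nabla u\|_{L^4(\Omega)}$ and $\|\nabla\tilde u\|_{L^4(\Omega)}$ to be finite; unlike Theorem~\ref{thm:BoundUInfty}, the a priori bounds of Lemma~\ref{lemma:apriori} only control the $L^2$ norms of the gradients, so the $L^4$ norms here must be carried along as hypotheses on the right-hand side rather than estimated away. This is why the statement is phrased as a differential inequality for $\tfrac{d}{dt}\|\tilde u-u\|_{L^2}^2$ with $\|\nabla u\|_{L^4}$ and $\|\nabla\tilde u\|_{L^4}$ appearing explicitly, rather than as a closed-form bound like \eqref{eq:FEUInfty}: one cannot integrate in time without additional regularity or embedding assumptions (in $d\le 3$, $H^2\hookrightarrow W^{1,4}$ would suffice, but invoking that would require elliptic regularity for $\tfatgamma$ and smoothness of the data). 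I would therefore simply present the estimate in differential form, noting that the factor $\|\fatgamma-\tfatgamma\|_\ast$ is typically much smaller than $\|\fatgamma-\tfatgamma\|_\infty$, which is the practical gain motivating this sharper bound.
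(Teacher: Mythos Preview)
Your argument is correct and follows essentially the same route as the paper: subtract the weak forms, set $v=\tilde u-u$, drop the nonpositive term $-\gamma\|\nabla v\|_{L^2}^2$, bound the integrand pointwise by $\|\fatgamma-\tfatgamma\|_2\|\nabla v\|_2\|\nabla\tilde u\|_2$, then separate the factors and finish with the triangle inequality on $\|\nabla v\|_{L^4}$. The only cosmetic difference is that where you apply the three-factor H\"older inequality with exponents $(2,4,4)$ in a single step, the paper splits the integral by Cauchy--Schwarz, inserts the pointwise estimate $\|\nabla v\|_2^2\|\nabla\tilde u\|_2^2\le d\,\|\nabla v\|_4^2\|\nabla\tilde u\|_4^2$ (to match its convention $\|\fatu\|_{L^p}^p=\int_\Omega\|\fatu\|_p^p\,d\Omega$), and then applies Cauchy--Schwarz once more; the resulting bounds differ only by a harmless dimension-dependent constant, and your closing remarks about why the $L^4$ factors must remain on the right-hand side are exactly the point the paper makes immediately after the proof.
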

\begin{proof}
 As previously subtracting \eqref{eq:uweak} from \eqref{eq:uweakdisturb} and using
 $v = \tilde{u}-u$ we get
 \begin{align*}
   \frac{1}{2} \frac{d}{dt} \|v\|_{L^2}^{2} 
   &\leq \int_{\Omega}\|\fatgamma-\tfatgamma\|_2\|\nabla v\|_2\|\nabla\tilde{u}\|_2d\Omega
   \leq \left(\int_{\Omega}\|\fatgamma-\tfatgamma\|_2^2d\Omega\right)^\frac{1}{2}\left(\int_{\Omega}\|\nabla v\|_2^2\|\nabla\tilde{u}\|_2^2d\Omega\right)^\frac{1}{2}\\
   &\leq
   \|\fatgamma-\tfatgamma\|_\ast|\Omega|^\frac{1}{2}\left(d\int_{\Omega}\|\nabla
     v\|_4^2\|\nabla\tilde{u}\|_4^2d\Omega\right)^\frac{1}{2}
   \leq \|\fatgamma-\tfatgamma\|_\ast|\Omega|^\frac{1}{2}d^{\frac{1}{2}}\|\nabla v\|_{L^4}\|\nabla\tilde{u}\|_{L^4}.
\end{align*}
\end{proof}

For $t\geq\delta>0$ the diffusion equations \eqref{eq:ucont2} and \eqref{eq:ucontdisturb2} smooth out irregularities in the initial data, so that
we can assume
$\|\nabla u\|_{L^4}$ and $\|\nabla\tilde{u}\|_{L^4}$ are bounded, such
that for some $C>0$
\begin{align}
	\label{eq:L4Bound}
	\|\nabla u\|_{L^4}\leq C \quad \text{and} \quad \|\nabla\tilde{u}\|_{L^4}\leq C.
\end{align}
and hence by integration of \eqref{eq:FEU}
\begin{align}
	\label{eq:BoundUFinal}
	\|\tilde{u}-u\|_{L^2}^{2} &\le C t \|\fatgamma-\tfatgamma\|_\ast.
\end{align}
In summary, Theorem~\ref{thm:BoundUInfty} shows that the forward error
can be bounded in terms of the difference
$\|\fatgamma-\tfatgamma\|_{\infty}$ and some factors that are
independent of $\tfatgamma$ (assuming that $\tfatgamma$ is uniformly
positive definite). Equation~\eqref{eq:BoundUFinal} shows a sharper bound in
terms of $\|\fatgamma-\tfatgamma\|_{\ast}$ at the cost of factors that
may depend on $\tfatgamma$. 
On balance we take
\rev{~\eqref{eq:BoundUFinal}} as the basis for our further analysis, thus
assuming essentially that $C$ in \eqref{eq:L4Bound} depends only
mildly on $\tfatgamma$.

The following proposition proves that $u$ and $\tilde{u}$ have
identical steady state, which shows that the $t$-dependent estimates
in Theorem \ref{thm:BoundUInfty} and \eqref{eq:BoundUFinal} are pessimistic
and give a relevant bound only for small $t$.
\begin{proposition}
 \label{prop:eq_error}
 For $t\to\infty$ the steady state solutions of \eqref{eq:ucont2} and
 \eqref{eq:ucontdisturb2} fulfill
 \begin{align}
   \label{eq:SteadyState}
   u_{\infty} = \tilde{u}_{\infty} &= \|u_0\|_{L_1}/|\Omega|.
 \end{align}
\end{proposition}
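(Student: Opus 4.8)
The plan is to combine conservation of mass with the (uniform) positive definiteness of the two diffusion operators. First I would observe that the total mass is invariant under both evolutions: taking the admissible constant test function $v\equiv 1\in H^{1}(\Omega)$ in the weak formulations \eqref{eq:uweak} and \eqref{eq:uweakdisturb} gives $\tfrac{d}{dt}\int_\Omega u\,d\Omega=0$ and $\tfrac{d}{dt}\int_\Omega \tilde u\,d\Omega=0$, so that $\int_\Omega u(\cdot,t)\,d\Omega=\int_\Omega\tilde u(\cdot,t)\,d\Omega=\int_\Omega u_0\,d\Omega$ for all $t\ge 0$; since $u_0\ge 0$ in this setting, this common value equals $\|u_0\|_{L_1}$.

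Next I would characterize the steady states. A steady state $u_\infty$ of \eqref{eq:ucont2} with homogeneous Neumann data satisfies $(\nabla v,\gamma\nabla u_\infty)=0$ for all $v\in H^{1}(\Omega)$; choosing $v=u_\infty$ and using $\gamma>0$ forces $\nabla u_\infty=0$, hence $u_\infty$ is constant on the connected domain $\Omega$. The same argument applied to \eqref{eq:ucontdisturb2}, now invoking the uniform lower bound $g\|\faty\|_2^2\le\faty^{T}\tfatgamma\faty$ from Lemma~\ref{lemma:apriori} in place of $\gamma>0$, shows that $\tilde u_\infty$ is likewise constant. Each constant is then pinned down by the conserved mass, $u_\infty|\Omega|=\|u_0\|_{L_1}=\tilde u_\infty|\Omega|$, which is exactly \eqref{eq:SteadyState}.

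Finally I would verify that the two evolutions actually converge to these steady states as $t\to\infty$, so that speaking of the ``steady state solution'' is justified. Set $w=u-u_\infty$; then $w$ solves the same equation with the same boundary condition and, by mass conservation, has zero spatial mean for all $t$. The computation behind \eqref{eq:apriori1_L2} gives $\tfrac12\tfrac{d}{dt}\|w\|_{L^2}^2=-(\nabla w,\gamma\nabla w)\le-\gamma\|\nabla w\|_{L^2}^2$, and the Poincar\'e--Wirtinger inequality $\|w\|_{L^2}\le C_P\|\nabla w\|_{L^2}$ for mean-zero functions then yields $\|w(\cdot,t)\|_{L^2}^2\le e^{-2\gamma t/C_P^2}\|w(\cdot,0)\|_{L^2}^2\to 0$; replacing $w$ by $\tilde u-\tilde u_\infty$ and $\gamma$ by $g$ handles \eqref{eq:ucontdisturb2}. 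The only genuinely nontrivial ingredient is this last step: it requires $\Omega$ to be a bounded connected Lipschitz domain for the Poincar\'e--Wirtinger inequality to hold, and it is the one place where the distinction between the scalar $\gamma$ and the matrix $\tfatgamma$ enters, through the ellipticity constant $g$. The remaining steps are routine integrations by parts.
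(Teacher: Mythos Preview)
Your argument is correct and mirrors the paper's proof: both test the steady-state weak formulation with the solution itself and use positive definiteness to force $\nabla u_\infty=0$ (hence constancy on the connected domain), and both test with $v\equiv 1$ to identify the constant via mass conservation and the sign assumption $u_0\ge 0$; only the order of these two steps differs. Your third paragraph, establishing exponential convergence to the steady state via the Poincar\'e--Wirtinger inequality, goes beyond the paper, which simply takes convergence to a steady state for granted; this is a genuine strengthening. One minor quibble: the energy identity you invoke, $\tfrac12\tfrac{d}{dt}\|w\|_{L^2}^2=-(\nabla w,\gamma\nabla w)$, comes from testing with $v=w$, not from the computation behind \eqref{eq:apriori1_L2}, which tests with $v=u_t$; the identity itself is of course correct.
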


\begin{proof}
 Using $v = \tilde{u}_{\infty}$ in \eqref{eq:uweakdisturb} we have at the steady
 state that
 \begin{align}
   0 &= (\nabla\tilde{u}_{\infty},\tfatgamma\nabla\tilde{u}_{\infty}).\nonumber
 \end{align}
 By the positive definiteness of $\tfatgamma$ this means
 $\nabla\tilde{u}_{\infty} = 0$ and hence $\tilde{u}_{\infty}$ is
 constant. A similar argument for $u$ implies the same property and,
 moreover, since $u$ is a density we can safely assume $u_ {0} =
 \tilde{u}_{0} \geq 0$. Setting $v=1$ in \eqref{eq:uweakdisturb} we conclude
 \begin{align*}
 	\tilde{u}_{\infty}|\Omega| &= \int_{\Omega}\tilde{u}_{\infty}d\Omega = (\tilde{u}_{\infty},1) = (\tilde{u}_0,1) = \|\tilde{u}_0\|_{L^1}
 \end{align*}
 and analogously for $u_{\infty}$.
\end{proof}

Using $u_0=\tilde{u}_0$ we have from Proposition~\ref{prop:eq_error}
that, since $\|u-\tilde{u}\|_{L^2}$ is continuous in time, there exists a
$t^{\star}\in(0,\infty)$ where the error reaches its maximum
$\|u(t^{\star})-\tilde{u}(t^{\star})\|_{L^2}\geq\|u(t)-\tilde{u}(t)\|_{L^2},\, \forall
t$.

We obtain a similar result for the error in Poisson's equations.
\begin{theorem}
Assume $\partial\Omega\in C^\infty$ and $\tfatgamma(x)\in C^\infty$, then for the weak solutions $\exit$ and $\tilde{\exit}$ of the weak problems corresponding to \eqref{eq:Econt} and \eqref{eq:Econtdisturb} 
 \begin{align}
   \|\exit-\tilde{\exit}\|_{L^2}^2 &\leq C\|\fatgamma-\tilde{\fatgamma}\|_\ast
   \label{eq:FEE}
 \end{align}
for some constant $C>0$.
\end{theorem}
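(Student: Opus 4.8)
The plan is to reproduce the energy estimate that gave Proposition~\ref{thm:BoundU}, now exploiting that the homogeneous Dirichlet condition places both solutions in $H_0^1(\Omega)$, so that Poincar\'e's inequality converts an $H^1$ bound on the error into the desired $L^2$ bound --- the tool that was unavailable in the Neumann parabolic setting. First I would record the weak problems: find $\exit,\tilde{\exit}\in H_0^1(\Omega)$ with
\begin{align*}
  (\nabla v,\gamma\nabla\exit) = (v,1), \qquad
  (\nabla v,\tfatgamma\nabla\tilde{\exit}) = (v,1), \qquad \forall v\in H_0^1(\Omega),
\end{align*}
well-posedness of the second problem being exactly where uniform positive definiteness of $\tfatgamma$ enters, via Lax--Milgram and Poincar\'e. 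Subtracting, setting $w=\exit-\tilde{\exit}$, and using the identity $\gamma\nabla\exit-\tfatgamma\nabla\tilde{\exit}=\gamma\nabla w+(\fatgamma-\tfatgamma)\nabla\tilde{\exit}$ (valid since $\fatgamma=\gamma\fatI$), I get the error equation $(\nabla v,\gamma\nabla w)=-(\nabla v,(\fatgamma-\tfatgamma)\nabla\tilde{\exit})$ for all $v\in H_0^1(\Omega)$.

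Next I would test with $v=w$ and run the Cauchy--Schwarz/H\"older chain exactly as in the proof of Proposition~\ref{thm:BoundU}:
\begin{align*}
  \gamma\|\nabla w\|_{L^2}^2 &= -(\nabla w,(\fatgamma-\tfatgamma)\nabla\tilde{\exit}) \\
  &\leq \left(\int_\Omega\|\fatgamma-\tfatgamma\|_2^2\,d\Omega\right)^{1/2} d^{1/2}\,\|\nabla w\|_{L^4}\|\nabla\tilde{\exit}\|_{L^4}
  = C\|\fatgamma-\tfatgamma\|_\ast\,\|\nabla w\|_{L^4}\|\nabla\tilde{\exit}\|_{L^4}.
\end{align*}
Here the smoothness hypotheses $\pOmega\in C^\infty$ and $\tfatgamma\in C^\infty$ are invoked: by interior and boundary elliptic regularity both $\exit$ and $\tilde{\exit}$ lie in $W^{2,p}(\Omega)$ for every finite $p$, hence in $C^1(\overline{\Omega})$ for $d\le 3$, so $\|\nabla w\|_{L^4}$ and $\|\nabla\tilde{\exit}\|_{L^4}$ are bounded by a constant essentially independent of the (small) perturbation --- in the same spirit as the standing assumption \eqref{eq:L4Bound}. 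This yields $\|\nabla w\|_{L^2}^2\le C\|\fatgamma-\tfatgamma\|_\ast$, and then Poincar\'e's inequality on $H_0^1(\Omega)$ finishes the argument: $\|\exit-\tilde{\exit}\|_{L^2}^2\le C_P^2\|\nabla w\|_{L^2}^2\le C\|\fatgamma-\tfatgamma\|_\ast$, which is \eqref{eq:FEE}.

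The main obstacle is the elliptic-regularity step: one has to argue that $\pOmega\in C^\infty$ and $\tfatgamma\in C^\infty$ really force $\tilde{\exit}\in C^1(\overline{\Omega})$ --- the $H^1$ a priori bound from coercivity (the elliptic analogue of Lemma~\ref{lemma:apriori}) is not enough --- because it is precisely the \emph{boundedness} of $\|\nabla\tilde{\exit}\|_{L^4}$, and not any smallness, that makes the right-hand side depend on $\fatgamma-\tfatgamma$ only through $\|\fatgamma-\tfatgamma\|_\ast$. A variant that avoids the $L^4$ detour is to bound $\|(\fatgamma-\tfatgamma)\nabla\tilde{\exit}\|_{L^2}\le|\Omega|^{1/2}\|\nabla\tilde{\exit}\|_{L^\infty}\|\fatgamma-\tfatgamma\|_\ast$ directly via Cauchy--Schwarz; this even gives the sharper $\|\exit-\tilde{\exit}\|_{L^2}^2\le C\|\fatgamma-\tfatgamma\|_\ast^2$, from which \eqref{eq:FEE} follows since the perturbation is bounded. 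Note that, in contrast with Theorem~\ref{thm:BoundUInfty} and \eqref{eq:BoundUFinal}, no time variable and no smoothing-in-time argument enter here, because the Dirichlet data supplies the Poincar\'e inequality outright.
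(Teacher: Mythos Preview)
Your proposal is correct and follows essentially the same route as the paper: subtract the two weak formulations, test with the difference, run the Cauchy--Schwarz/H\"older chain of Proposition~\ref{thm:BoundU} to extract $\|\fatgamma-\tfatgamma\|_\ast$, invoke elliptic regularity from the $C^\infty$ hypotheses to bound the $L^4$ gradient norms, and finish with Poincar\'e--Friedrichs. The only cosmetic difference is in the splitting: the paper writes $\gamma\nabla\exit-\tfatgamma\nabla\tilde{\exit}=\tfatgamma\nabla v+(\fatgamma-\tfatgamma)\nabla\exit$, obtaining $\|\nabla v\|_{\tfatgamma}^2$ on the left (and then uses $\tfatgamma\ge g$), whereas you write $\gamma\nabla w+(\fatgamma-\tfatgamma)\nabla\tilde{\exit}$, obtaining $\gamma\|\nabla w\|_{L^2}^2$ directly; either way the regularity step must control both $\nabla\exit$ and $\nabla\tilde{\exit}$ in $L^4$, which the paper does by citing \cite[Chap.~6.3, Theorem~6]{Evans} to get $\exit,\tilde{\exit}\in C^\infty(\overline{\Omega})$.
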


\begin{proof}
 Choosing $v=\exit-\tilde{\exit}$ and subtracting the weak formulations for
 \eqref{eq:Econt} and \eqref{eq:Econtdisturb} we obtain, with $\|\fatv\|^2_{\fatgamma} = (\fatv,\fatgamma\fatv)$,
 \begin{align*}
   0 &= \|\nabla v\|^2_{\tilde{\fatgamma}}+
   (\nabla v,(\fatgamma-\tilde{\fatgamma})\nabla \exit),
 \end{align*}
 using the same arguments as in the proof of Proposition \ref{thm:BoundU} gives
 \begin{align*}
   \|\nabla v\|_{\tilde{\fatgamma}}^2 
   &\leq C\|\fatgamma-\tilde{\fatgamma}\|_\ast\|\nabla v\|_{L^4}\|\nabla \exit\|_{L^4}.
 \end{align*}
 By the Poincar\'{e}-Friedrich's inequality and the positive-definiteness of $\tfatgamma$
 \begin{align*}
   \|v\|_{L^2} &\leq Cg^{-1}\|\fatgamma-\tilde{\fatgamma}\|_\ast\|\nabla v\|_{L^4}\|\nabla \exit\|_{L^4}
 \end{align*}
 By \cite[Chap.~6.3, Theorem~6]{Evans} and the assumptions we obtain
 $\exit\in C^\infty(\bar{\Omega})$ and
 $\tilde{\exit}\in C^\infty(\bar{\Omega})$. This bounds both
 $\|\nabla\exit\|_\infty$ and $\|\nabla\tilde{\exit}\|_\infty$ and
 hence $\|\nabla\exit\|_{L^4}\leq C$ and
 $\|\nabla\tilde{\exit}\|_{L^4}\leq C$.
\end{proof}

We conclude that we can effectively bound both the forward error
$\|u-\tilde{u}\|_{L^2}$ and the error in the mean first exit time
$\|\exit-\tilde{\exit}\|_{L^2}$ by the difference
$\|\fatgamma-\tilde{\fatgamma}\|_\ast$. In the following section we
present an algorithm for calculating this quantity for a given
discretization matrix $\tilde{\fatD}$.


\section{Backward analysis}\label{sec:analysis}

In Section~\ref{sec:mesodiff}, we presented the FVM and the modified
FEM to compute the stiffness matrix $\tfatS$ leading to non-negative
jump coefficients in \eqref{eq:lambdadef}.  This matrix can be
interpreted as the FEM matrix of a standard, convergent discretization
of the perturbed equation \eqref{eq:ucontdisturb2}.  The general
diffusion matrix $\tfatgamma(x)$ is symmetric, positive definite and
may have non-zero off-diagonal elements.  The difference between
$\fatgamma$ and $\tfatgamma$ should be as small as possible. This is a
measure of how close the jump coefficients are to modeling stochastic
diffusion which converges to isotropic diffusion with a constant
$\gamma$.


\subsection{The FEM discretization}\label{sec:FEM}

Interpreting $\tfatS$ as the standard FEM stiffness matrix to the perturbed equation \eqref{eq:ucontdisturb2} implies that
\begin{equation}
\tS_{ij} = -(\nabla\psi_i,\tfatgamma(\fatx)\nabla\psi_j)\label{eq:BackAn}
\end{equation}
for all edges $e_{ij}$. 
The sparsity pattern of $\fatS$ and $\tfatS$ is the same determined by the connectivity of the mesh.
Here $\psi_i$ and $\psi_j$ are the hat functions of linear Lagrangean finite elements with $\psi_i(\fatx_i)=1$ 
and $\psi_i(\fatx_j)=0$ when $i\ne j$.
Since the right hand side of \eqref{eq:BackAn} is a symmetric expression in $i$ and $j$ the perturbed stiffness matrix $\tfatS$ has to be symmetric. The FVM and nnFEM generate symmetric stiffness matrices. To symmetrize $\tfatS$ resulting from GFET we use its symmetric part
$(\tfatS+\tfatS^T)/2$ as $\tfatS$ in the following.
The boundary $\pOmega$ of the domain $\Omega$ is assumed 
to be polygonal and $\Omega$ is discretized such that 
\begin{equation}
\Omega=\bigcup_{T_k\in\calT}T_k,
\end{equation}
where $\calT$ is the set of all non-overlapping elements $T_k$. These elements are triangles in 2D and tetrahedra in 3D 
in the primal mesh on $\Omega$ defined by the edges $e_{ij}$, see Fig.~\ref{fig:Discretization}. The dual mesh on $\Omega$ defines the voxels $\calV_i$ in Section~\ref{sec:mesodiff}. 
With $\calT_{ij}\subset\calT$ being the set of all 
triangles in 2D or tetrahedra in 3D containing edge $e_{ij}$ we can write \eqref{eq:BackAn} as
\begin{align}\label{eq:STilde}
\tS_{ij} &= -\sum_{T_k\in\calT_{ij}}\int_{T_k}\nabla\psi_i^T\tfatgamma(\fatx)\nabla\psi_j d\fatx 
= -\sum_{T_k\in\calT_{ij}}\nabla\psi_i^T\big|_{T_k}\int_{T_k}\tfatgamma d\fatx\nabla\psi_j\big|_{T_k}\\
&= -\sum_{T_k\in\calT_{ij}}\nabla\psi_i^T\big|_{T_k}\tfatgamma_k\nabla\psi_j\big|_{T_k}|T_k|,\nonumber
\end{align}
since the gradients are constant in $T_k$. 
It is only the average $\tfatgamma_k$ 
of $\tfatgamma(x)$ on each element $T_k$ that contributes to $\tS_{ij}$.
Thus, we calculate $\tfatgamma_k$ of the following type in 2D and 3D, respectively,
\begin{equation}\label{eq:gammadef}
\tfatgamma^{2D}_{k} = 
\begin{pmatrix}
\tgamma_{k1} & \tgamma_{k3}\\
\tgamma_{k3} & \tgamma_{k2}
\end{pmatrix}
,\quad
\tfatgamma^{3D}_{k} = 
\begin{pmatrix}
\tgamma_{k1} & \tgamma_{k4} & \tgamma_{k5} \\
\tgamma_{k4} & \tgamma_{k2} & \tgamma_{k6} \\
\tgamma_{k5} & \tgamma_{k6} & \tgamma_{k3}
\end{pmatrix}.
\end{equation}
With
\begin{equation}
\nabla\psi_i^{2D}=
\begin{pmatrix}
\nabla\psi_{i1}\\
\nabla\psi_{i2}
\end{pmatrix},
\quad
\nabla\psi_i^{3D}=
\begin{pmatrix}
\nabla\psi_{i1}\\
\nabla\psi_{i2}\\
\nabla\psi_{i3}
\end{pmatrix},
\end{equation}
and the coefficients $C_{ijkl}$ and $L$ as in Table \ref{tab:coefficients}, \eqref{eq:BackAn} becomes for each edge $e_{ij}$
\begin{equation}
\tS_{ij} = \sum_{T_k\in\calT_{ij}}\sum_{l=1}^L C_{ijkl}\tgamma_{kl}.
\label{eq:constraint}
\end{equation}

\begin{table}[H]
\centering
\begin{tabular}{|l|c|c|}
\hline
& 2D & 3D\\
\hline
L & 3 & 6\\
\hline
$C_{ijk1}$ & $-\nabla\psi_{i1}\nabla\psi_{j1}$ & $-\nabla\psi_{i1}\nabla\psi_{j1}$ \\
$C_{ijk2}$ & $-\nabla\psi_{i2}\nabla\psi_{j2}$ & $-\nabla\psi_{i2}\nabla\psi_{j2}$ \\
$C_{ijk3}$ & $-(\nabla\psi_{i1}\nabla\psi_{j2}$ +$\nabla\psi_{i2}\nabla\psi_{j1})$ & $-\nabla\psi_{i3}\nabla\psi_{j3}$ \\
$C_{ijk4}$ & & $-(\nabla\psi_{i1}\nabla\psi_{j2}$ +$\nabla\psi_{i2}\nabla\psi_{j1})$ \\
$C_{ijk5}$ & & $-(\nabla\psi_{i1}\nabla\psi_{j3}$ + $\nabla\psi_{i3}\nabla\psi_{j1})$ \\
$C_{ijk6}$ & & $-(\nabla\psi_{i2}\nabla\psi_{j3}$ + $\nabla\psi_{i3}\nabla\psi_{j2})$ \\
\hline
\end{tabular}
\caption{Coefficients for in \eqref{eq:constraint}.}
\label{tab:coefficients}
\end{table}

In 2D, the integrand in \eqref{eq:BackAn} is non-zero on two triangles and on at least three tetrahedra in 3D for the edges in the interior of $\Omega$.
One can show using induction that a 2D mesh with $N$ vertices and $E_B$ edges at the boundary has $T=2N-2-E_B$ triangles and $E=3N-3-E_B$ edges. Taking into account that there are three unknowns per triangle in \eqref{eq:gammadef} 
and one equation \eqref{eq:constraint} per edge we have to solve an underdetermined system for any triangulation containing more than one triangle 
with $3N-3-2E_B$ remaining degrees of freedom.

In 3D, the system of linear equations defined by \eqref{eq:constraint} is also underdetermined if the mesh consists of more than one tetrahedron. 
Each edge in the mesh is an edge of at least one tetrahedron but there may be only one tetrahedron associated with the edge on the boundary.
Then the number of unknowns is six in \eqref{eq:gammadef} 
and the number of linear constraints \eqref{eq:constraint} is six. For each additional tetrahedron
sharing the same edge, there are six new unknowns and three new constraints. The total number of unknowns for each edge is $6T$ where $T$ is the
number of tetrahedra with a common edge and the number of linear constraints is $3T+3$. Locally, the diffusion matrix $\tfatgamma$
is underdetermined with $3T-3$ degrees of freedom. 

Consequently, the diffusion $\tfatgamma$ satisfying \eqref{eq:constraint} is not unique,
but for all possible $\tfatgamma$ the error analysis in Section~\ref{sec:ErrorEstimate} holds. We obtain the sharpest bounds on $\|u-\tilde{u}\|_{L^2}$ and $\|\exit-\tilde{\exit}\|_{L^2}$ by finding $\tfatgamma$ satisfying \eqref{eq:constraint} and minimizing the difference $\|\fatgamma-\tfatgamma\|_*$ in the equations. An alternative would be to replace $\|\cdot\|_2$ in \eqref{eq:GlobalError} by the Frobenius norm $\|\cdot\|_F$.
Since 
\begin{equation}\label{eq:normineq}
  \|\fatA\|_2\le\|\fatA\|_F\le\sqrt{d}\|\fatA\|_2
\end{equation}
for a matrix $\fatA$ \cite{GVL}, the bound in Section~\ref{sec:ErrorEstimate} is sharper if the minimization is made in the $\|\cdot\|_2$ norm.
In the following, we propose a 
global and a local optimization procedure to find these minimizers $\tfatgamma$.

\subsection{Global optimization}\label{sec:gloanalysis}
The diffusion matrix $\tfatgamma$ closest to the original diffusion $\fatgamma$ with constant coefficient $\gamma$ is found by minimizing the distance between $\tfatgamma(\fatx)$ and $\fatgamma$ under the constraints in \eqref{eq:constraint}. The stiffness matrix $\tfatS$ is given by \eqref{eq:lambdadef} and one of the methods in Section~\ref{sec:mesodiff}. As only the average $\tfatgamma_k$ of $\tfatgamma(\fatx)$ appears in the FEM approximation on each triangle $T_k$, see \eqref{eq:STilde}, the norm of the difference in diffusion in \eqref{eq:GlobalError} reduces to the weighted sum of the differences $\|\tfatgamma_k-\fatgamma\|_2^2$ as a measure of the distance resulting in the following optimization problem
\begin{eqnarray}\label{eq:globopt}
\min_{\tfatgamma_k} &\sum_{T_k\in\calT}|T_k|\|\tfatgamma_k-\fatgamma\|_2^2 &\label{eq:objfunc}\\
&\sum_{T_k\in\calT_{ij}}\sum_{l=1}^L C_{ijkl}\tgamma_{kl} &= \tS_{ij},\quad\forall e_{ij}.\label{eq:linconstr}
\end{eqnarray}
This is a nonlinear programming problem with $3T$ variables in 2D and $6T$ variables in 3D and $E$ linear constraints.

The difference $\|\tfatgamma_k-\fatgamma\|_2^2$ is a convex function in the unknowns in $\tfatgamma_k$. Hence, 
the objective function in \eqref{eq:objfunc} is a convex function too. Since also the constraint set in \eqref{eq:linconstr} is convex, the
local solution to \eqref{eq:objfunc} and \eqref{eq:linconstr} is the unique global optimum. If $S_{ij}\ge 0$ for all $i,j$ from the FEM discretization
with diffusion constant $\gamma$, then $\tS_{ij}=S_{ij}$ for all $i,j$ and the solution to \eqref{eq:globopt} is $\tfatgamma_k=\fatgamma$ for all $T_k$.

The mean value matrix $\tfatgamma_k$ defines two (three) main axes in 2D (3D) on $T_k$. Let the columns of $\fatV$ be the eigenvectors
$\fatv_j$ of $\tfatgamma_k$ with eigenvalues $\lambda_j$. After a coordinate transformation from $\fatx$ to $\faty$ with 
$\fatx=\fatV\faty$ the diffusion term is
\begin{equation}\label{eq:coortrans}
   \nabla_{\fatx}\cdot(\tfatgamma\nabla_{\fatx} u)=\sum_j \lambda_j\frac{\partial^2 u}{\partial v_j^2}. 
\end{equation} 
The eigenvectors define the main axes of the diffusion and the diffusion speed along those axes is given by the eigenvalues of $\tfatgamma$.
Since 
\begin{equation}\label{eq:gamdif2norm}
    \|\tfatgamma_k-\fatgamma\|_2=\max_j|\lambda_j-\gamma|,
\end{equation} 
the $\ell_2$ norm in \eqref{eq:objfunc} 
measures the maximum deviation in speed of the diffusion in $\tfatgamma$ compared to $\gamma$ weighted by the size of $T_k$. 
In the Frobenius norm 
\begin{equation}\label{eq:gamdifFnorm}
    \|\tfatgamma_k-\fatgamma\|_F=\left(\sum_{j=1}^d (\lambda_j-\gamma)^2\right)^{1/2},
\end{equation} 
and the norm is equal to the $\ell_2$ norm of the difference in diffusion speed in all directions.
The objective function in \eqref{eq:objfunc}
is continuous in $\tfatgamma$ but it is not continuously differentiable everywhere.

\subsection{Local optimization}\label{sec:locanalysis}

The optimization problem in the previous section may be computationally expensive but it is simplified if 
we approach the solution of \eqref{eq:linconstr} by local optimization. Let $\calE_{ij}$ be defined by
\begin{equation}\label{eq:Eijdef}
  \calE_{ij}=\{e_{mn}: \; e_{mn}\;{\rm is}\;{\rm an}\;{\rm edge}\;{\rm of}\;{\rm any}\; T_k\in\calT_{ij}\}
\end{equation}
The adjacent $\tfatgamma_k$ in $T_k$ in 
$\calT_{ij}$ for each edge $e_{ij}$ are optimized, while keeping $\tS_{ij}$ constant on the other 
edges in $\calE_{ij}$. 
Update $\tfatgamma_k$ with the most recently computed diffusion matrix. Then iterate over all edges once.
Still, the underdetermined system \eqref{eq:linconstr} will be satisfied
but with a different $\tfatgamma^L$ compared to $\tfatgamma^G$ solving \eqref{eq:objfunc}. The algorithm is as follows
\begin{algorithm}[H]
\caption{Local Optimization I}
\label{alg:Local}
\begin{algorithmic}[1]
\State $\tfatgamma_k=\fatgamma,\quad\forall T_k\in\calT$
\ForAll {$e_{ij}$}
\State Solve
\begin{equation*}
\begin{array}{lll}
\min_{\tfatgamma_k^{new}} & \sum_{T_k\in\calT_{ij}}|T_k|\|\tfatgamma_k^{new}-\fatgamma\|_2^2\\
&\sum_{T_k\in\calT_{ij}}\sum_{l=1}^L C_{ijkl}\tgamma_{kl}^{new} &= \tS_{ij}\\
&\sum_{l=1}^L C_{mnkl}\tgamma_{kl}^{new} &= \sum_{l=1}^L C_{mnkl}\tgamma_{kl}\quad\\
&&\forall e_{mn}\in\calE_{ij}\setminus e_{ij},\; \forall T_k\in\calT_{ij}
\end{array}
\end{equation*}
\State $\tfatgamma_k=\tfatgamma_k^{new},\, T_k\in\calT_{ij}$
\EndFor
\end{algorithmic}
\end{algorithm}

The diffusion $\tfatgamma_k$ changes successively only on the elements adjacent to $e_{ij}$ 
(two triangles in 2D and at least three tetrahedra in 3D in the interior) 
in each iterative step. 
 At each inner edge in 2D, there are six variables and five constraints.
As remarked in Section~\ref{sec:FEM} above, the number of variables in 3D in Algorithm~\ref{alg:Local} 
is $6T$ and the number of constraints is $3T+3$, where
$T$ is the number of tetrahedra sharing the common edge $e_{ij}$. 
For a boundary edge, the number of unknowns equals the number of constraints in 2D and one has to solve only
the linear system in \eqref{eq:constraint}.

Also here we have that if $\tS_{ij}=S_{ij}\ge 0$ for all $i,j$ then 
the solution is $\tfatgamma_k=\fatgamma$. The order in which the edges are traversed matters for the result by the algorithm but when all edges have been
visited then \eqref{eq:linconstr} is satisfied. In the numerical experiments in Section~\ref{sec:numres}, the order
is random but other choices are possible.

The global $\tfatgamma_k^G$ from \eqref{eq:objfunc} and the local $\tfatgamma_k^L$ from Algorithm~\ref{alg:Local} 
fulfill
\begin{equation}\label{eq:glopt}
\eta^G_2 = \sqrt{\frac{1}{|\Omega|}\sum_{T_k\in\calT}|T_k|\|\tfatgamma_k^G-\fatgamma\|_2^2} \leq \sqrt{\frac{1}{|\Omega|}\sum_{T_k\in\calT}|T_k|\|\tfatgamma_k^L-\fatgamma\|_2^2} = \eta^L_2,
\end{equation}
since $\tfatgamma_k^G$ is the global minimum solution.

Neither the global nor the local procedure to determine $\tfatgamma$ guarantee its positive definiteness when the solution is computed with
an optimization algorithm for a nonlinear objective function with linear constraints. Extra nonlinear constraints can be added to enforce positive definiteness. That leads to slow algorithms or sometimes very large backward errors $\|\fatgamma-\tfatgamma_k\|_2$ in the numerical experiments in Section~\ref{sec:numres}. An alternative would be to apply a computationally more expensive semi-definite programming algorithm \cite{BoydVanden04, VandenBoyd96} to the problem. In Section~\ref{sec:numres}, we first compute $\tfatgamma$ without constraints for positive definiteness and then check the solution for positive definiteness. The nonlinear programming algorithm finds positive definite $\tfatgamma_k$ for
all elements in most cases.

The diffusion $\tfatgamma$ is computed for a given mesh of finite mesh size $h$. What happens with $\tfatgamma$ when the mesh is refined depends on the 
mesh generator. If all $S_{ij}$ become non-negative as $h\rightarrow 0$, then $\tfatgamma\rightarrow \fatgamma$. Otherwise, there will be a difference 
$\|\fatgamma-\tfatgamma\|_*$ of $\ordo{1}$ as $h$ vanishes.

The backward analysis is extended in the next section to the design of the stiffness matrix $\tfatS$ such that the backward error $\|\fatgamma-\tfatgamma\|_*$ is minimized. 


\section{Design}\label{sec:design}

In the previous section, we described how to analyze existing methods for creating positive jump coefficients by backwards analysis. In this section we determine a new discretization using FEM by minimizing the backward error.
We devise non-negative jump coefficients such that the perturbed diffusion $\tfatgamma$ is as close as possible to the original diffusion with a constant $\gamma$.
The connectivity of the network of edges is the same as in Section~\ref{sec:FEM} but $\tS_{ij}$ is free
to vary. Molecules in the stochastic setting are allowed to jump only to the neighboring voxels but the rate is a free variable to be optimized such that the distribution of molecules converges to the diffusion equation \eqref{eq:ucontdisturb2} in the limit of large molecules numbers.

\subsection{Global optimization}\label{sec:glodesign}

The diffusion $\tfatgamma_k$ in each triangle or tetrahedron is determined such that
\begin{eqnarray}\label{eq:globopt2}
\min_{\tfatgamma_k} &\sum_{T_k\in\calT}|T_k|\|\tfatgamma_k-\fatgamma\|_2^2 &\label{eq:objfunc2}\\
&\sum_{T_k\in\calT_{ij}}\sum_{l=1}^L C_{ijkl}\tgamma_{kl} &\ge 0,\quad\forall e_{ij}.\label{eq:linconstr2}
\end{eqnarray}
The equality constraints in \eqref{eq:linconstr} are replaced by the inequalities in \eqref{eq:linconstr2}. 
The new jump coefficients $\lambda_{ji}$ are computed by the optimal $\tfatgamma$ and $\tS_{ij}$  
\begin{equation}\label{eq:newS}
   \tS_{ij}=\sum_{T_k\in\calT_{ij}}\sum_{l=1}^L C_{ijkl}\tgamma_{kl},\; i\ne j,
\end{equation}
inserted into \eqref{eq:lambdadef}. 
The stiffness matrix is thus obtained from the FEM discretization of the diffusion term in \eqref{eq:ucontdisturb1} with linear Lagrangean elements 
and diffusion matrix $\tfatgamma_k$ on $T_k$. 

\subsection{Local optimization}\label{sec:locdesign}

The local optimization algorithm in Section~\ref{sec:locanalysis} to analyze given jump coefficients is modified in the same way to
generate new coefficients instead. For each edge, the adjacent diffusion matrices are computed such that they are close to
$\fatgamma$ and the non-negativity constraint is satisfied for the edges. 
Instead of keeping the contribution to the other edges constant we let it vary constrained by non-negativity. 
If $e_{mn}$ is an edge in $\calE_{ij}\setminus e_{ij}$, then we allow $\tfatgamma$ to be such that
\begin{equation}
\sum_{T_k\in\calT_{mn}}\sum_{l=1}^L C_{mnkl}\tgamma_{kl} =\tS_{mn}\ge 0.
\label{eq:PosOuterEdge}
\end{equation}
In each local optimization step, $\tfatgamma_k$ in the elements $T_k$ adjacent to edge $e_{ij}$ are modified while keeping $\tS_{mn}$
in other edges in $\calE_{ij}$ non-negative. Splitting the sum in \eqref{eq:PosOuterEdge} into two parts we have
\begin{eqnarray}
\sum_{T_k\in\calT_{ij}\cap\calT_{mn}}\sum_{l=1}^L C_{mnkl}\tgamma_{kl} &\ge & -\sum_{T_k\in\calT_{mn}\setminus(\calT_{ij}\cap\calT_{mn})}\sum_{l=1}^L C_{mnkl}\tgamma_{kl}
\label{eq:diffupda}\\
&=& \sum_{T_k\in\calT_{ij}\cap\calT_{mn}}\sum_{l=1}^L C_{mnkl}\tgamma_{kl}-\tS_{mn}.
\label{eq:diffupd}
\end{eqnarray}
The diffusion matrix on the left hand side of \eqref{eq:diffupda} is updated given the diffusion matrix in the right hand side in \eqref{eq:diffupd}.
This is repeated successively for all edges in the following algorithm.
\begin{algorithm}[H]
\caption{Local Optimization II}
\label{alg:Local2}
\begin{algorithmic}[1]
\State $\tfatgamma_k=\fatgamma, \quad\forall T_k\in\calT$ 
\ForAll {$e_{ij}$} 
\State $\tS_{ij}=\sum_{T_k\in\calT_{ij}}\sum_{l=1}^L C_{ijkl}\tgamma_{kl}$
\EndFor
\ForAll {$e_{ij}$}
\State Solve
\begin{equation*}
\begin{array}{lll}
\min_{\tfatgamma_k^{new}} & \sum_{T_k\in\calT_{ij}}|T_k|\|\tfatgamma_k^{new}-\fatgamma\|_2^2\\
&\sum_{T_k\in\calT_{ij}}\sum_{l=1}^L C_{ijkl}\tgamma_{kl}^{new} &\ge 0\\
&\sum_{T_k\in\calT_{ij}\cap\calT_{mn}}\sum_{l=1}^L C_{mnkl}\tgamma_{kl}^{new} &\ge \sum_{T_k\in\calT_{ij}\cap\calT_{mn}}\sum_{l=1}^L C_{mnkl}\tgamma_{kl}-\tS_{mn} \quad\\
&&\forall e_{mn}\in\calE_{ij}\setminus e_{ij},\; \forall T_k\in\calT_{ij}
\end{array}
\end{equation*}
\State $\tfatgamma_k=\tfatgamma_k^{new},\, T_k\in\calT_{ij}$
\State $\tS_{ij}=\sum_{T_k\in\calT_{ij}}\sum_{l=1}^L C_{ijkl}\tgamma_{kl}$
\EndFor
\end{algorithmic}
\end{algorithm}

The number of optimization problems to be solved in Algorithm~\ref{alg:Local2} is the number of edges $E$ which is bounded by a constant times
the number of vertices $N$ in a mesh.
The size of each optimization problem is independent of $E$ and $N$. Hence, the computational work is proportional to $N$ in the local
optimization and of the same computational complexity as the matrix assembly of $\fatS$.
The edges are traversed in a random order in the experiments in Section~\ref{sec:numres}. When all edges have been visited once, 
$\tS_{ij}$ satisfies \eqref{eq:newS} and $\tS_{ij}\ge 0$ and the new
$\lambda_{ji}$ is computed using \eqref{eq:lambdadef}.

\subsection{Practical implementation}
The local minimization problem only contains the adjacent triangles or tetrahedra and is hence faster to compute but $\eta^L_2>\eta^G_2$, see \eqref{eq:glopt}.
Instead of running the local Algorithms~\ref{alg:Local} and~\ref{alg:Local2}  only once, we can repeat them iteratively with the results $\tfatgamma_k$ of the previous iteration as the initial guess for the next minimization. 
Then $\eta^L_2$ will approach $\eta^G_2$.

A possibility to speed up the computation is to replace the $\ell_2$ norm of the error by the Frobenius norm
\begin{equation}
\eta_F = \sqrt{\frac{1}{|\Omega|}\sum_{T_k\in\calT}|T_k|\|\tilde{\fatgamma}_k-\fatgamma\|_F^2}.
\label{eq:Frobenius}
\end{equation}
The global non-linear minimization problem \eqref{eq:globopt} then simplifies to the quadratic programming problem
\begin{eqnarray}
\min_{\bar{\gamma}} &\bar{\fatgamma}^T\fatH\bar{\fatgamma}-2\fatf^T\bar{\fatgamma} \label{eq:globoptFrobenius}\\
&\sum_{T_k\in\calT_{ij}}\sum_{l=1}^L C_{ijkl}\tgamma_{kl} &\ge 0,\quad\forall e_{ij},\label{eq:globoptFrobenius2}
\end{eqnarray}
where $\fatH$ is a diagonal matrix with positive elements on the diagonal and $\bar{\fatgamma}$ is a vector with $\tgamma_{kl}$ as
components.
By the relation between the $\ell_2$ and Frobenius norms \eqref{eq:normineq}, the resulting $\eta_F$ only yields an upper bound on the global minimum $\eta^G_2$.
In the local optimizations in Algorithms~\ref{alg:Local} and \ref{alg:Local2}, $\|\cdot\|_2^2$ is then substituted by $\|\cdot\|_F^2$.

We can further reduce the computational complexity by rewriting the high dimensional minimization problem \eqref{eq:globoptFrobenius} 
as the smaller dual problem.
\begin{equation}\label{eq:globoptDual}
\min_{\fatmu\geq 0} \fatmu^T\tilde{\fatH}\fatmu+2\tilde{\fatf}^T\fatmu,
\end{equation}
where $\fatmu\geq 0$ is equivalent to $\mu_i\geq 0, \forall i,$ and
\begin{equation}\label{eq:Cconstr}
\tilde{\fatH} = \fatC \fatH^{-1}\fatC^T,\quad\quad
\tilde{\fatf} = \fatC\fatH^{-1}\fatf,\quad\quad
\fatgamma = -\fatH^{-1}(\fatC^T\fatmu-\fatf).
\end{equation}
In \eqref{eq:Cconstr}, $\fatC$ is such that \eqref{eq:globoptFrobenius2} is replaced by $\fatC\bar{\fatgamma}\ge 0$.
The primal problem of dimension $3T$ is hence reduced by approximately a factor two to the dual problem of dimension $E$ in 2D. In 3D the dual problem is more than a factor 4.5 smaller than the primal problem in numerical experiments in Section~\ref{sec:numres}.
The interior point algorithm is well suited for the quadratic programming problems \eqref{eq:globoptFrobenius} and \eqref{eq:globoptDual}, see e.g. \cite{BoydVanden04, KiKohLusBoGo07}.

\subsection{Alternatives to determine a non-negative $\tfatS$}\label{sec:alt}

Another two possibilities are investigated to calculate a $\tfatS$ with only non-negative off-diagonal entries. From the set of discrete equations \eqref{eq:udisc} it appears that a smaller difference between the discretization matrices $\fatD$ and $\tfatD$ leads to a smaller error in the solution $\|\fatu-\tilde{\fatu}\|_{L^2}$. That suggests to find a $\tfatS$ with the same sparsity pattern as $\fatS$ but with only non-negative entries such that $\|\fatD-\tfatD\|_2$ is minimized.

A second alternative to guarantee non-negative jump coefficients is adding artificial viscosity to the system. The same viscosity is added patchwise in all elements with a common vertex. If edge $e_{ij}$ corresponds to a negative entry, then enough viscosity to eliminate the negative entry $S_{ij}$ is added to all edges originating from $\fatx_i$ and $\fatx_j$ as in the graph Laplacian. The symmetry of the original matrix $\fatS$ is preserved by adding $|S_{ij}|/2$ to the nodes around $\fatx_i$ and $\fatx_j$ in the following way
\begin{align*}
&\tS_{ik} = S_{ik}+|S_{ij}|/2,
&\tS_{ki} = S_{ki}+|S_{ij}|/2,\quad\quad\forall \fatx_k \text{ connected to } \fatx_i \text{ by } e_{ki},\\
&\tS_{jk} = S_{jk}+|S_{ij}|/2,
&\tS_{kj} = S_{kj}+|S_{ij}|/2,\quad\quad\forall \fatx_k \text{ connected to } \fatx_j \text{ by } e_{kj}.
\end{align*}
This is a generalization of the nnFEM approach where a sufficient amount of viscosity is added only to the negative edge.
This type of artificial viscocity is introduced in \cite{GuNa14} to prove that the maximum principle is satisfied for a conservation law.


\section{Numerical experiments}\label{sec:numres}

In this section, we determine numerically the local $\eta^L$ and global $\eta^G$ 
backward errors in \eqref{eq:glopt} for the different methods generating non-negative coefficients as described in Sections~\ref{sec:mesodiff} and~\ref{sec:design} with a diffusion coefficient $\gamma=1$. By the analysis in Section~\ref{sec:ErrorEstimate}, 
the backward error bounds the forward error of the mean values in the spatial distribution of the copy numbers of the molecules $\|u-\tilde{u}\|_{L^2}$ \eqref{eq:FEU} and the exit 
times $\|\exit-\tilde{\exit}\|_{L^2}$ \eqref{eq:FEE}. 
All computations are done in Matlab using its optimization routines. The meshes are generated by COMSOL Multiphysics and the FEM matrices are assembled by the same software. 


\subsection{Diffusion in 2D}
The square $[-0.5,0.5]\times[-0.5,0.5]$ is discretized into 227 nodes, see Fig.~\ref{fig:Mesh}. As mentioned in Section~\ref{sec:mesodiff}, mesh generators usually produce good quality meshes in 2D and the mesh in Fig.~\ref{fig:Mesh} is intentionally perturbed to obtain 47 edges with negative jump coefficients marked by red in Fig.~\ref{fig:Mesh}.

The requirement to obtain a non-negative discretization poses other constraints on the mesh than what is necessary for a FEM solution of high accuracy. Examples of quality measures $\calq$ related to errors in the finite element solution of Poisson's equation are found in \cite{Shew}. In 2D, let $h_1, h_2, h_3,$ be the lengths of the edges of a triangle of area $A$ with the angle $\varphi_3$ opposing the edge of maximum length $h_3$. A bound on the error in the gradient between $f$ and the approximating $f_h$ in the triangle is in \cite{Shew}
\begin{equation}\label{eq:errest}
  \displaystyle{\|\nabla f-\nabla f_h\|_\infty \le c_f \frac{3 h_1 h_2 h_3}{2A}=c_f\frac{3h_3}{2\sin(\varphi_3)}=c_f\frac{1}{\calq}},
\end{equation}
where $c_f$ is a bound on the second derivatives of $f$. The measure $\calq$ is positive and should be as large as possible. Suppose that two triangles with the same edge lengths have the edge $e_{ij}$ of length $h_3$ in common. Then $S_{ij}$ in \eqref{eq:S2D} is negative when $\varphi_3>\pi/2$ while the estimate in \eqref{eq:errest} is as small as possible when $\varphi_3$ is in the neighborhood of $\pi/2$. 
On the other hand, the accuracy is poor if $h_3$ is large and all angles are less than $\pi/2$. Then $\calq$ is small but the jump coefficients are positive.
A large $h_3$ will of course also affect the spatial resolution of the stochastic simulations but the diffusion propensities are well defined.
\begin{figure}[H]
\centering
\includegraphics[width=0.6\textwidth]{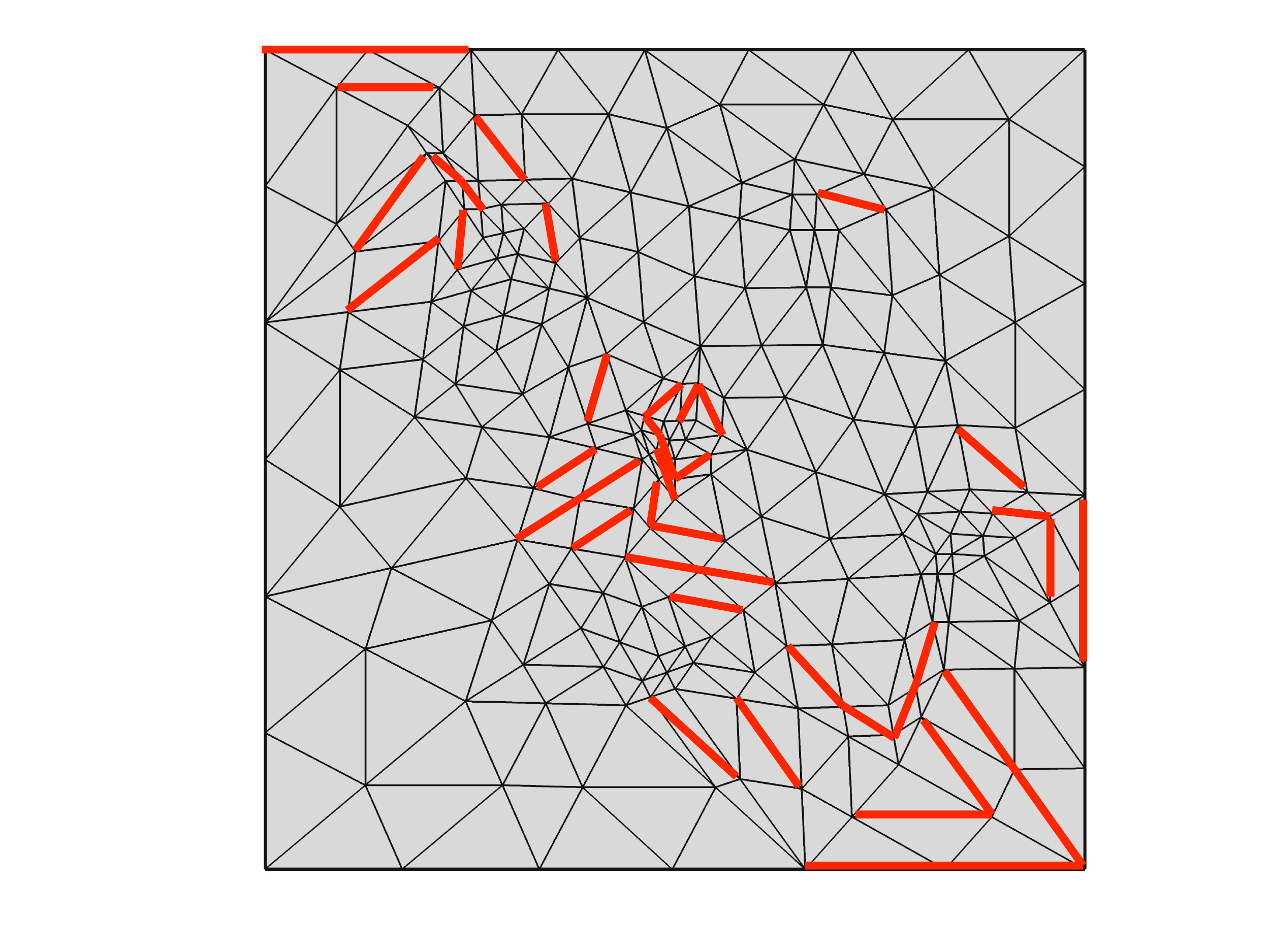} 
\caption{ The mesh in 2D. Negative edges are shown in  red. }
\label{fig:Mesh}
\end{figure}

\subsubsection{Backward analysis}
\begin{figure}[H]
\centering
\begin{tabular}{l c c}
&
\includegraphics[width=0.12\textwidth]{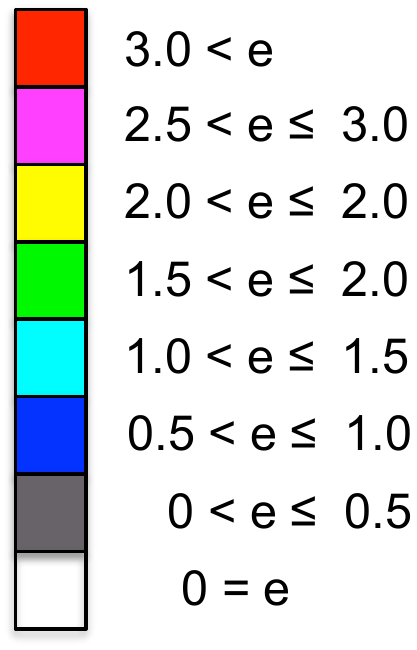} &
\includegraphics[width=0.12\textwidth]{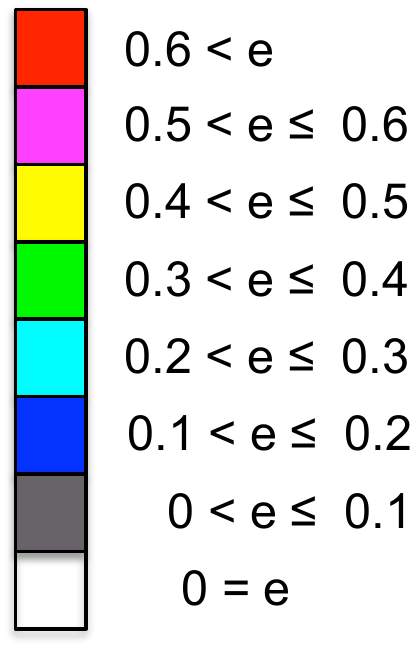}
\\
& Local & Global \\
FVM	&
\includegraphics[width=0.35\textwidth]{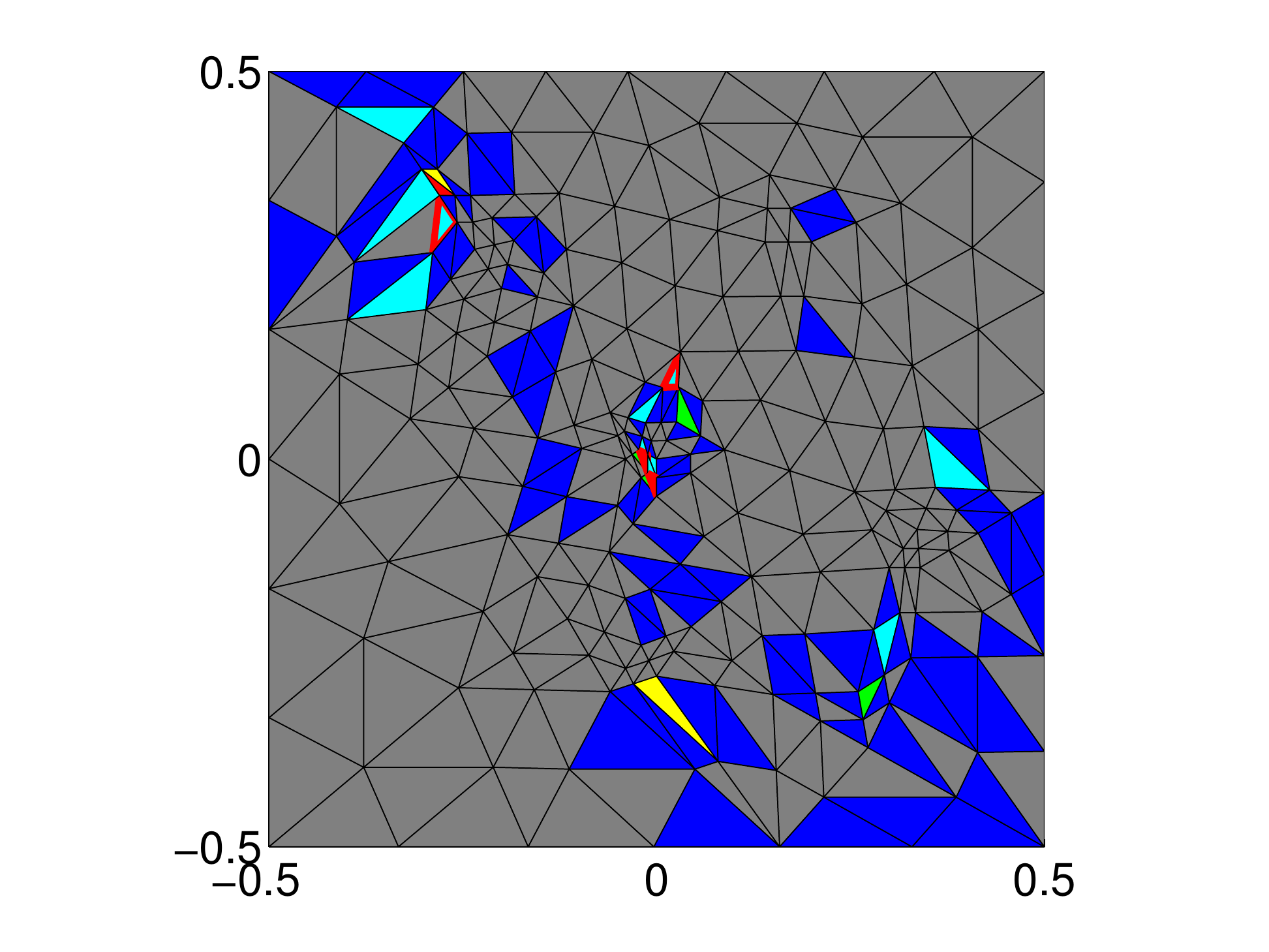} &
\includegraphics[width=0.35\textwidth]{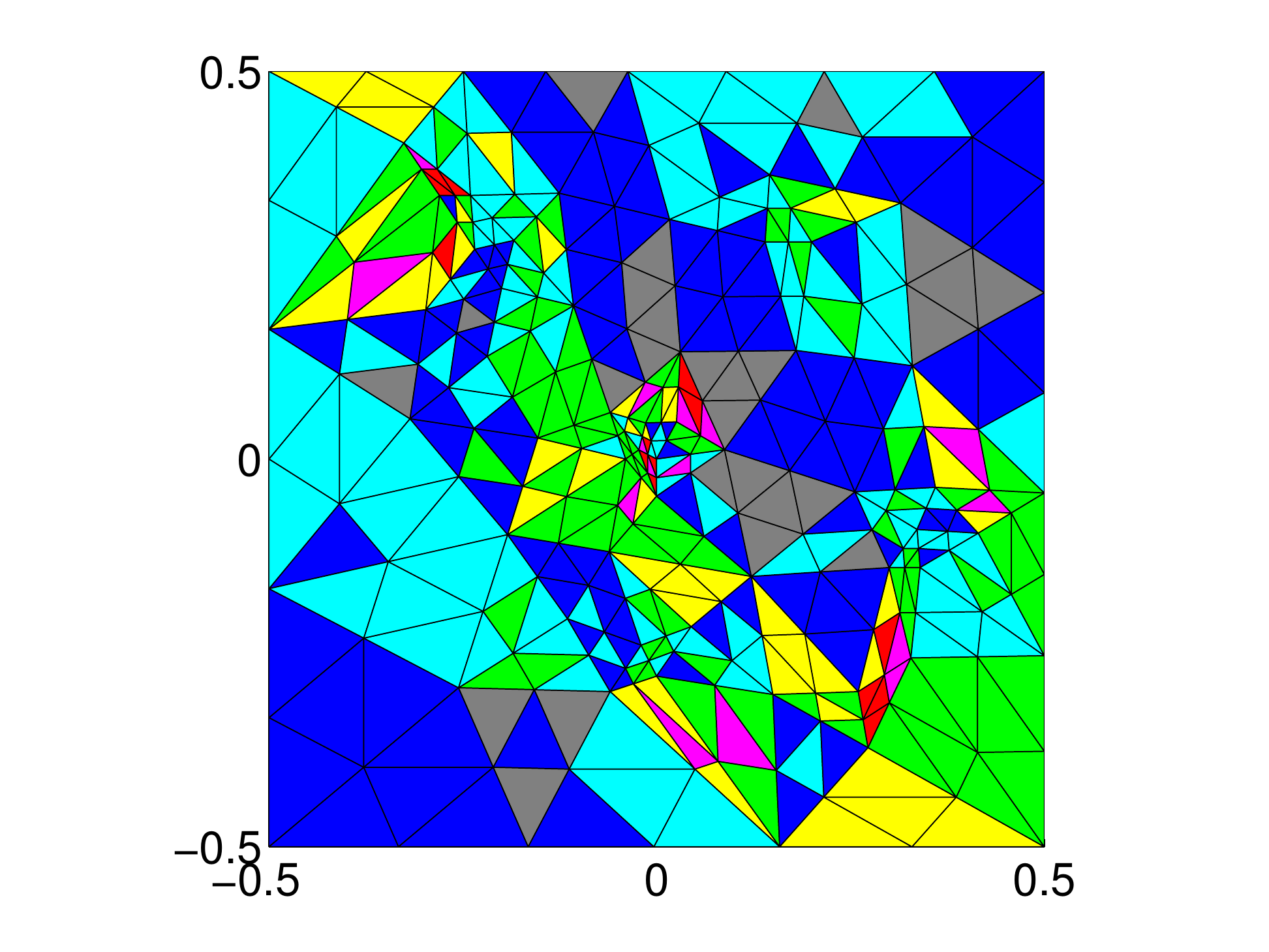}\\
GFET	&
\includegraphics[width=0.35\textwidth]{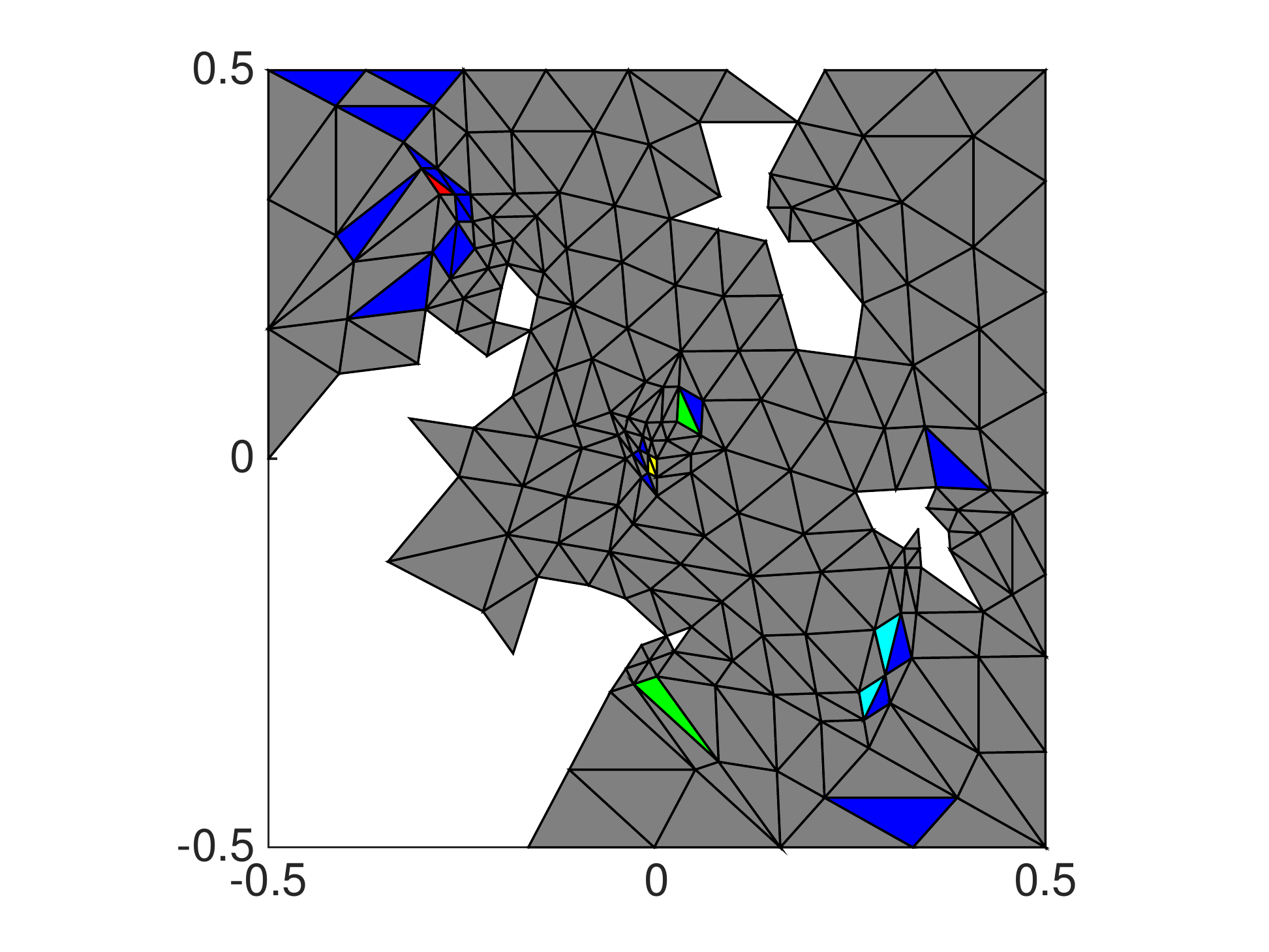} &
\includegraphics[width=0.35\textwidth]{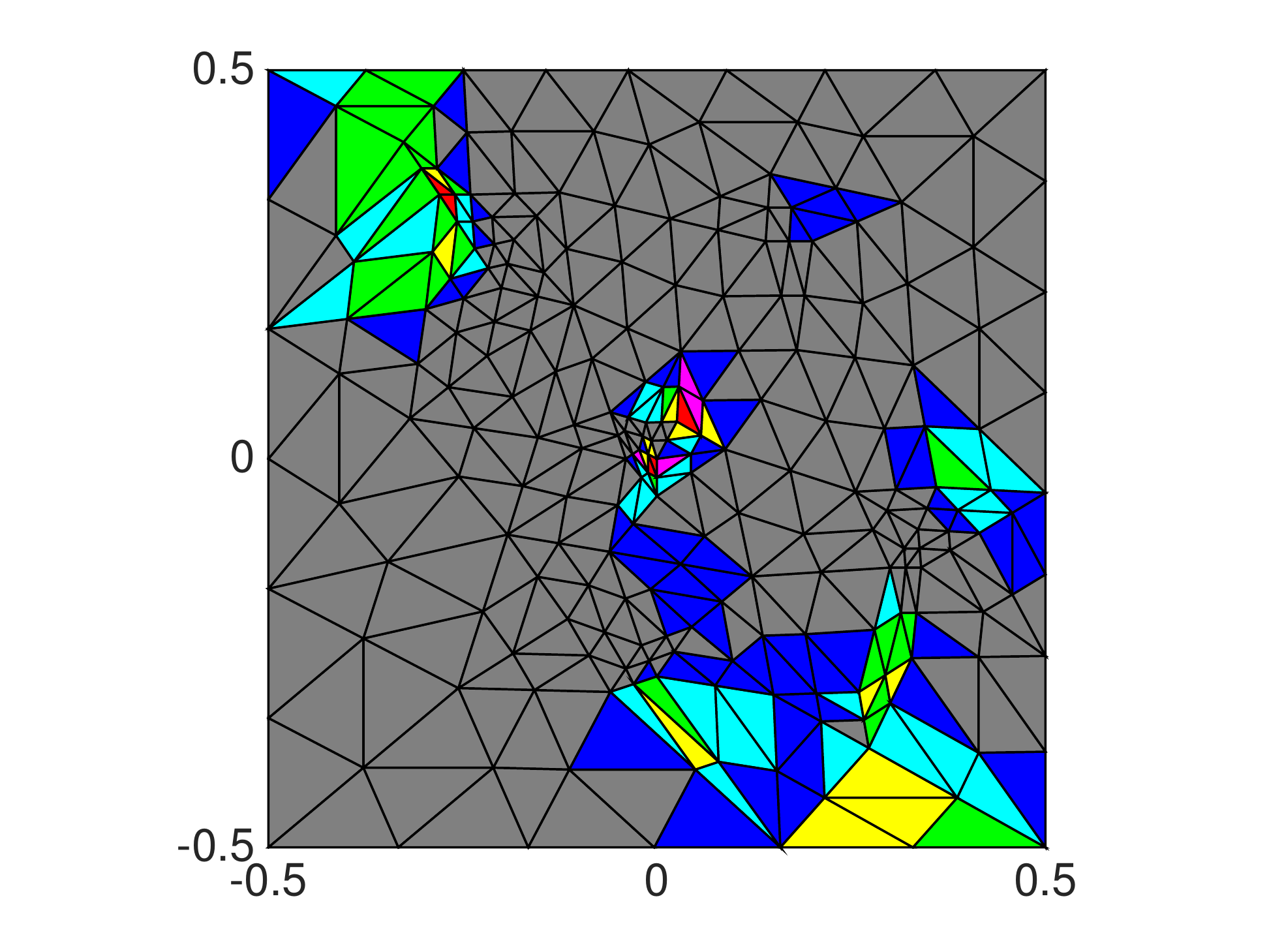} \\
nnFEM	&
\includegraphics[width=0.35\textwidth]{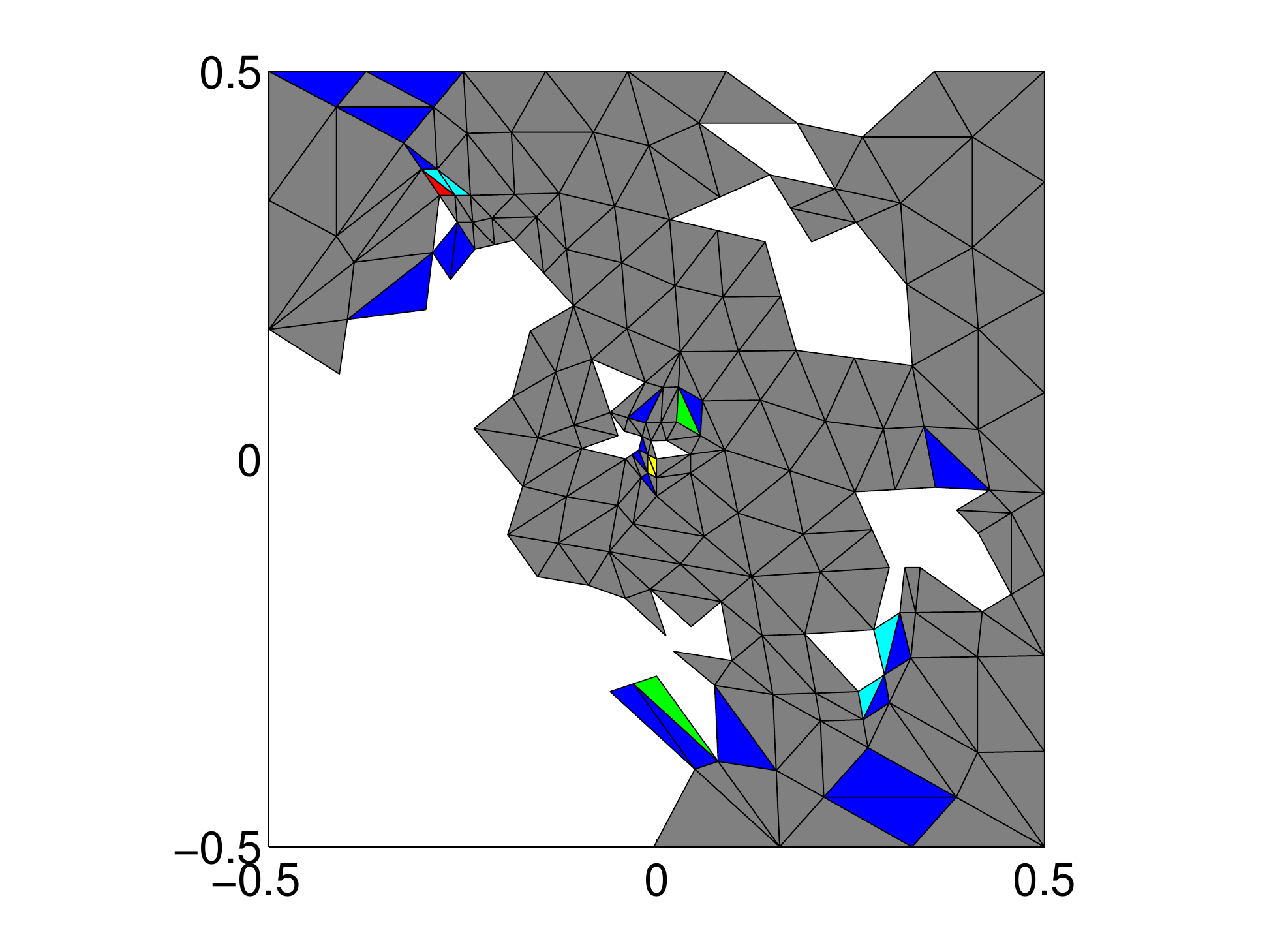} &
\includegraphics[width=0.35\textwidth]{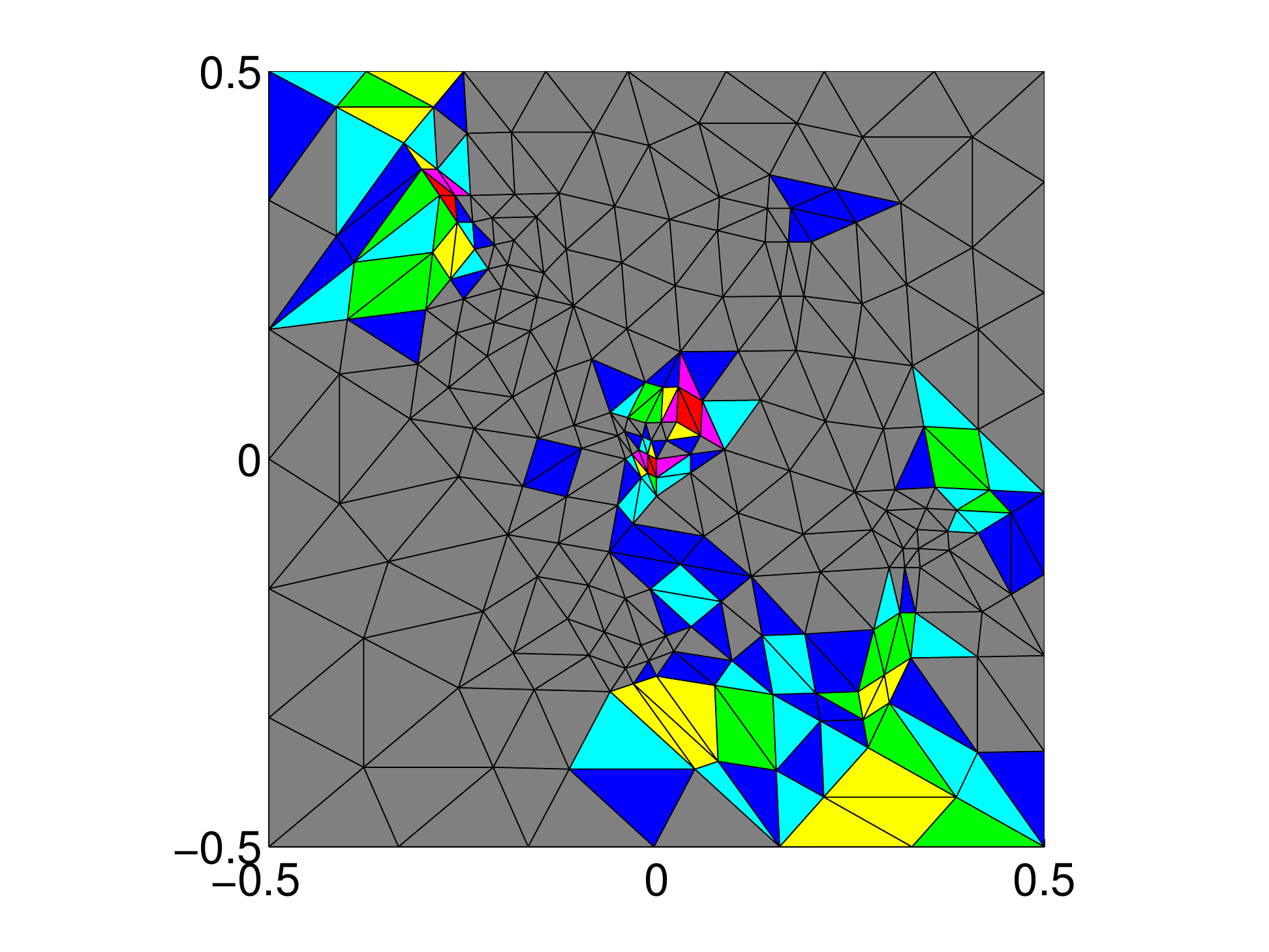} 
\\
MBE	&
\includegraphics[width=0.35\textwidth]{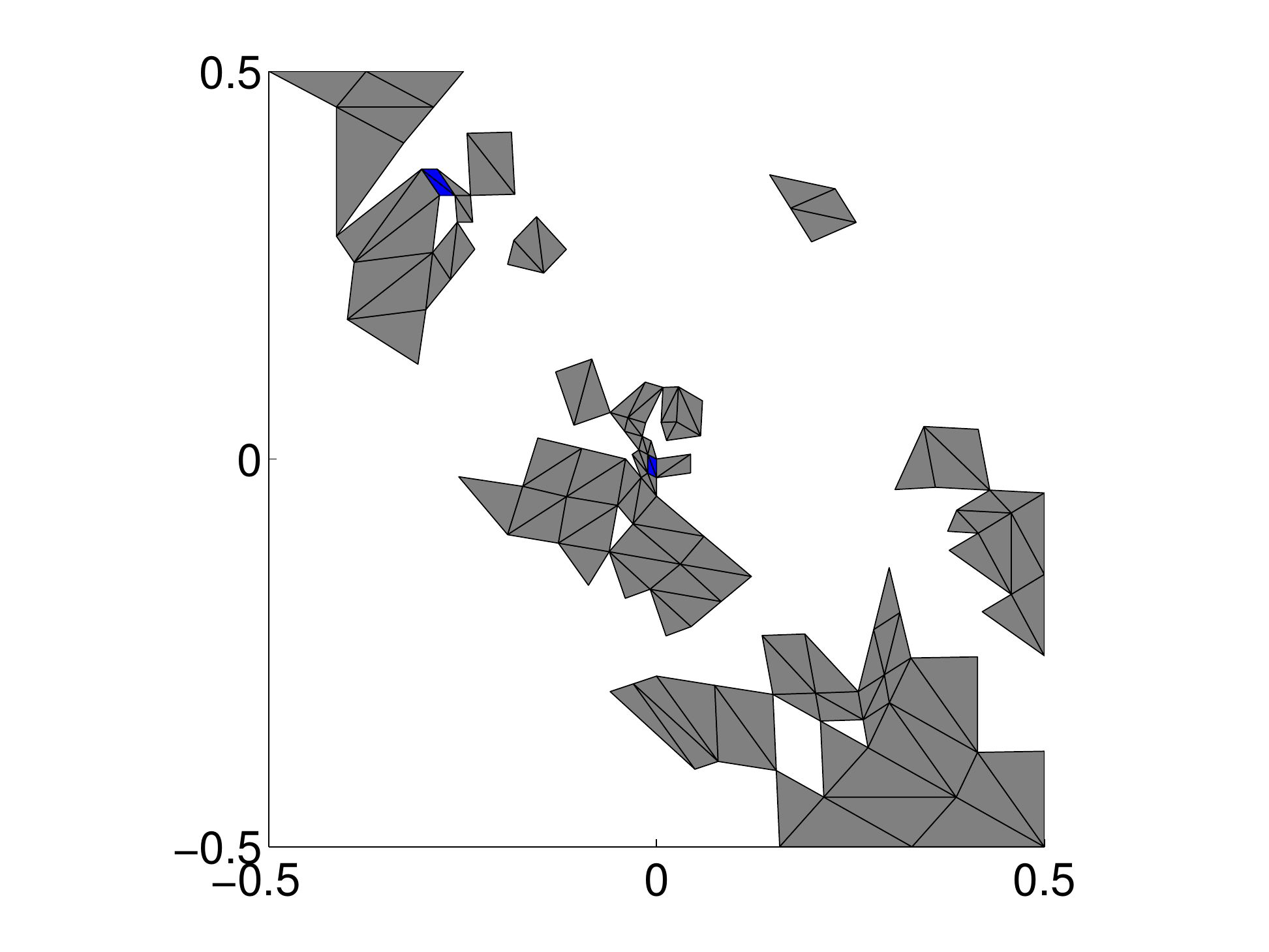} &
\includegraphics[width=0.35\textwidth]{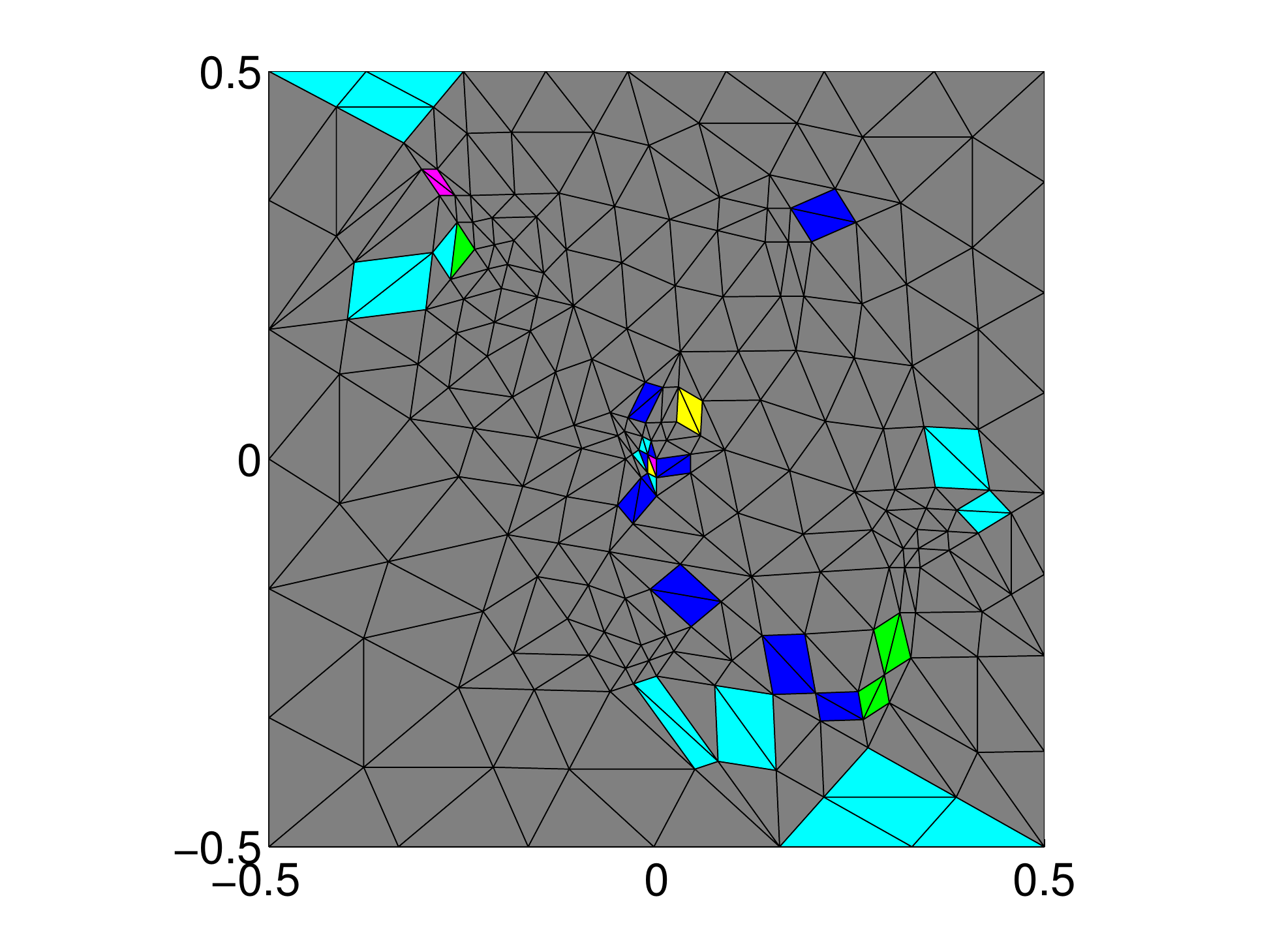} 
\\
\end{tabular}
\caption{The backward error calculated by the local and global minimizations in Sections \ref{sec:analysis} and \ref{sec:design}. The local error is $e = \|\tilde{\fatgamma}_k-\fatgamma\|_2$ on each triangle $T_k\in\calT$.}
\label{fig:BackwardE}
\end{figure}

We compare the finite volume method (FVM), the symmetrized global first exit time method (GFET), the non-negative finite element method (nnFEM), and the method minimizing the backward error (MBE). These methods all produce discretization matrices approximating the Laplacian with only non-negative off-diagonal entries. The experiments are carried out for the mesh in Fig.~\ref{fig:Mesh}. In Fig.~\ref{fig:BackwardE}, the local backward error $e=\|\tilde{\fatgamma}_k-\fatgamma\|_2$ is plotted for all triangles $T_k\in\calT$, calculated by the local and global minimizations in Algorithms \ref{alg:Local} and \ref{alg:Local2} and in \eqref{eq:objfunc}, \eqref{eq:linconstr} and \eqref{eq:objfunc2}, \eqref{eq:linconstr2}.

In Table~\ref{tab:BackwardError} we show the error
\begin{equation}
\eta_2 = \sqrt{\sum_{T_k\in\calT}|T_k|\|\tilde{\fatgamma}_k-\fatgamma\|_2^2/|\Omega|},
\label{eq:GlobalE}
\end{equation}
where $\eta_2$ is either $\eta^L_2$ or $\eta^G_2$ according to \eqref{eq:glopt}. For the global MBE, the matrices $\tilde{\fatgamma}_k$ are first calculated by nnFEM and then used as an initial guess for the MBE optimization. The minima are computed by Matlab's {\tt fmincon} with the active-set algorithm.


\begin{table}[H]
\centering
\begin{tabular}{l| c c}
& $\eta^L_2$ & $\eta^G_2$\\
\hline
FVM & $0.4211$ & $0.2729$ \\
nnFEM & $0.2057$ & $0.1524$ \\
GFET & $0.1963$ & $0.1356$ \\
MBE & $0.0693$ & $0.0690$\\
\end{tabular}
\caption{The global backward errors in \eqref{eq:GlobalE} computed locally $\eta^L_2$ and globally $\eta^G_2$.}
\label{tab:BackwardError}
\end{table}

The local calculation of the backward error is more pessimistic than the global one in the table as expected from \eqref{eq:glopt} but the ranking of the different methods is the same for both $\eta^L_2$ and $\eta^G_2$.
The FVM naturally leading to non-negative jump coefficients causes the largest backward error when used on a poor mesh. A partial explanation to the FVM results may be that the jump coefficients are generated by a different principle than FEM. The fluxes over the element boundaries are approximated in FVM and the method is forced into the framework of FEM. 
On four triangles, a discretization with FVM even leads to a negative definite diffusion matrix $\tilde{\fatgamma}_k$ when calculated locally without the positive definiteness constraint.
The GFET and nnFEM perform comparably for the mesh in Fig.~\ref{fig:Mesh}. Computing $\tfatD$ for GFET is slightly more expensive than setting the negative off-diagonal entries to 0 in nnFEM. However, contrary to the nnFEM the GFET preserves the exit time property of the original diffusion, see \cite{Lotstedt2015}.
The minimization constrained by inequalities to obtain the discretization matrices with MBE improves the introduced backward error substantially. The faster local and slower global minimization algorithms yield similar results for MBE on the mesh in this example.

%

\subsubsection{Forward error}

The relative error in the discrete solution $\|u_h-\tilde{u}_h\|_{L^2}/\|u_h\|_{L^2}$ is computed to verify our analysis in Section~\ref{sec:ErrorEstimate}. Here $\|u_h\|_{L^2}^2=\fatu_h^T\fatM\fatu_h$ with $\fatu_h$ being the vector of the solution in each node and $\fatM$ the lumped mass matrix. The relative error is plotted between the discrete solution $u_h$ with the original discretization matrix $\fatD$ (with negative off-diagonal entries) and the perturbed discrete solution $\tilde{u}_h$ resulting from one of the algorithms generating non-negative off-diagonal entries in $\tfatD$. The system of ordinary differential equations in \eqref{eq:udiscdisturb2} is solved by Matlab's {\tt ode15s}. Fig.~\ref{fig:ForwardE}(a) shows the initial condition
\begin{equation}
u(\fatx,0)=\tanh(20x_1)\tanh(20x_2)+1
\end{equation}
on the mesh in Fig.~\ref{fig:Mesh}.

The forward error $\|u_h(\fatx,t)-\tilde{u}_h(\fatx,t)\|_{L^2}/\|u_h\|_{L^2}$ in space at $t=0.01$ is depicted in Figs.~\ref{fig:ForwardE}(b)-(f).

\begin{figure}[H]
\subfigure[Initial Condition]{\includegraphics[width=0.32\textwidth]{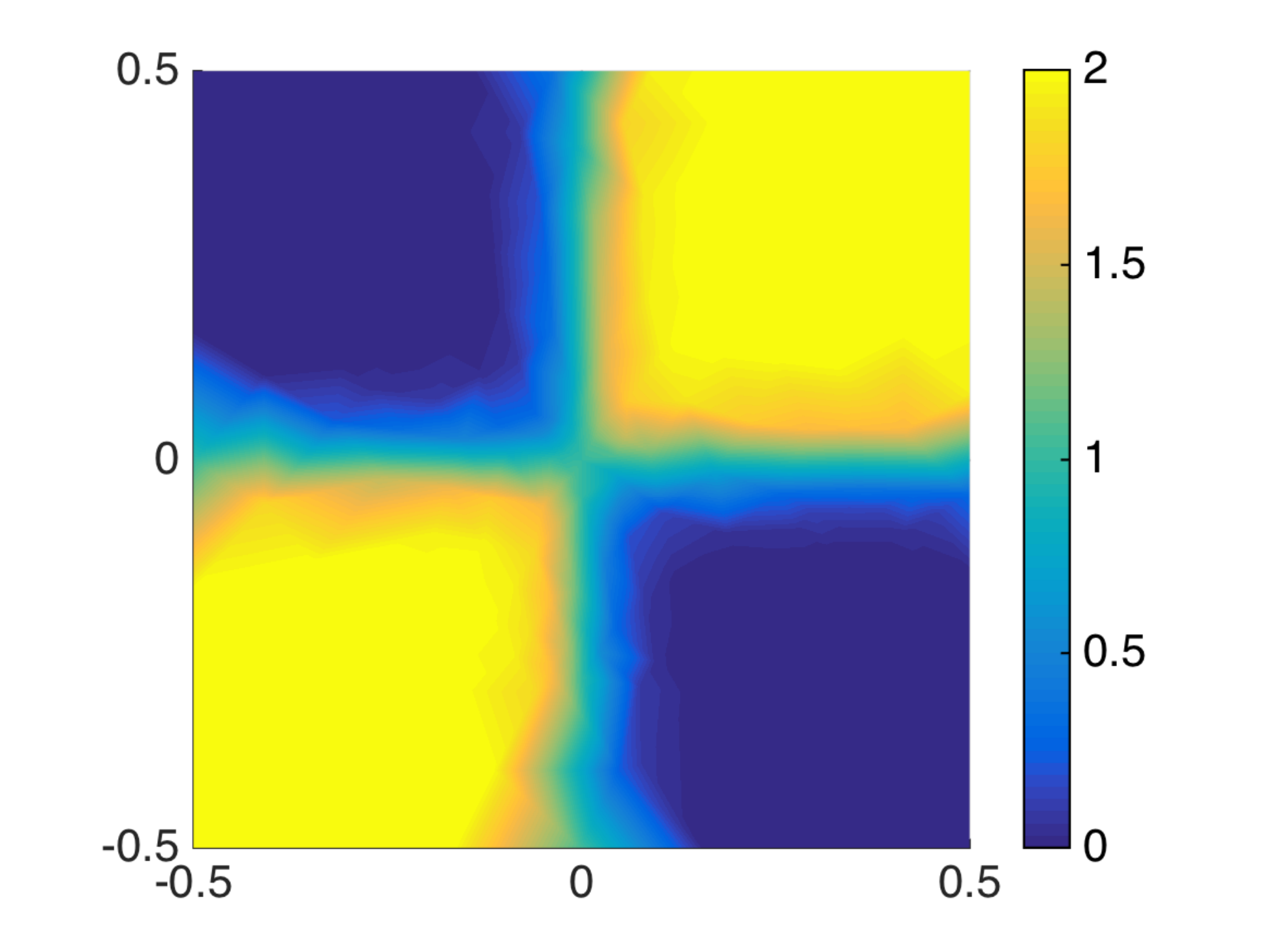}}
\subfigure[GFET]{\includegraphics[width=0.32\textwidth]{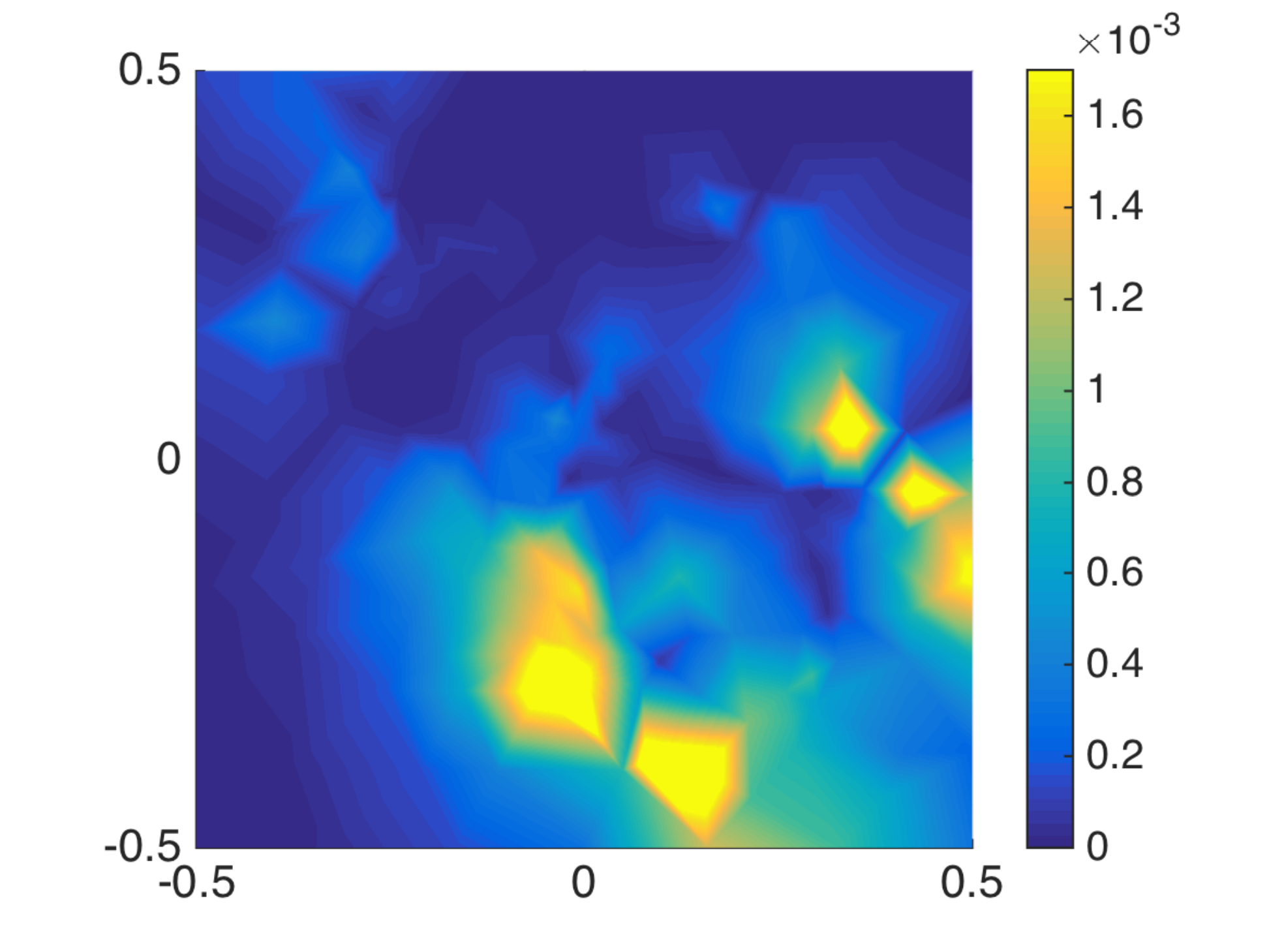}}
\subfigure[nnFEM]{\includegraphics[width=0.32\textwidth]{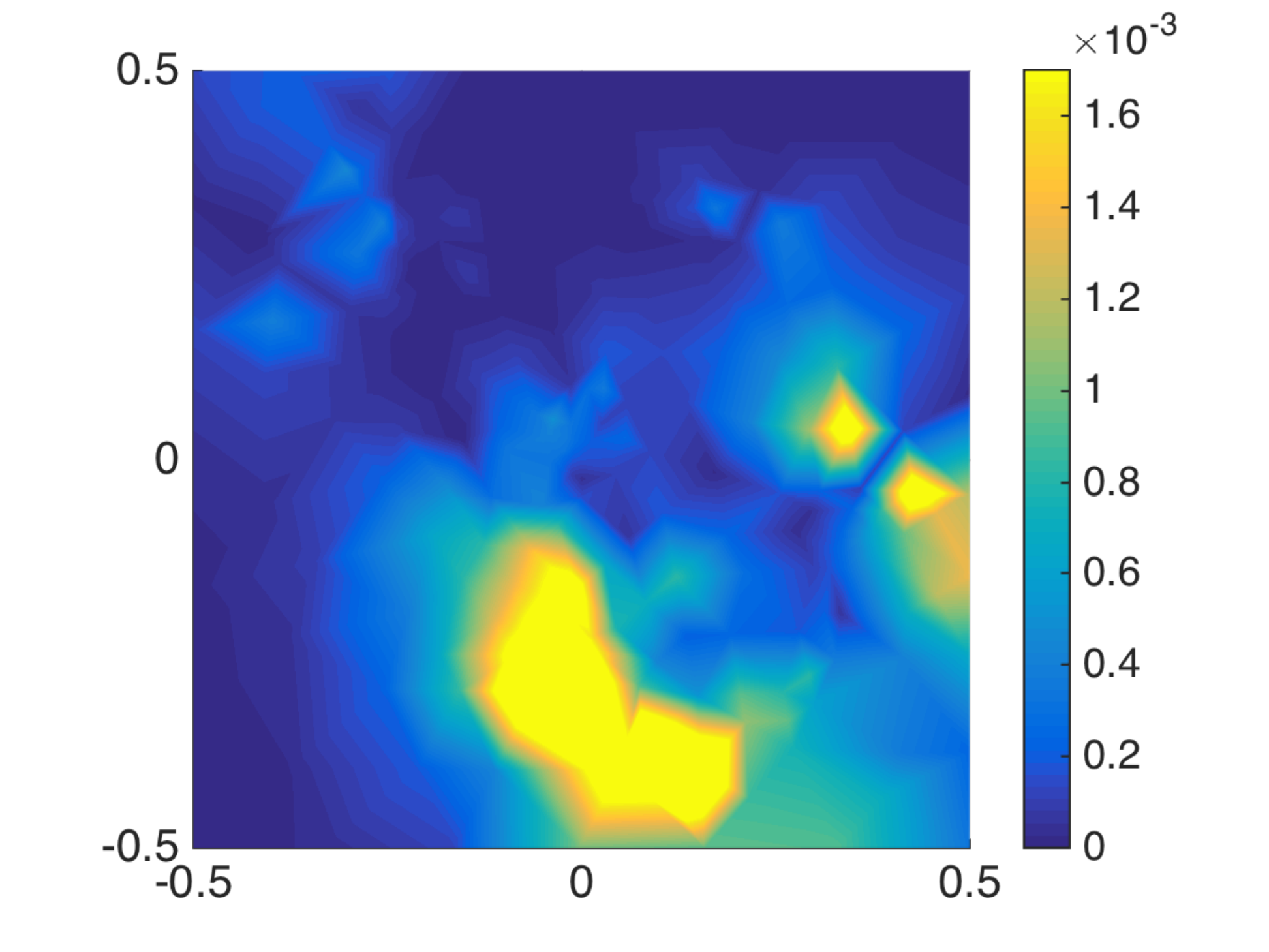}}\\
\subfigure[FVM]{\includegraphics[width=0.32\textwidth]{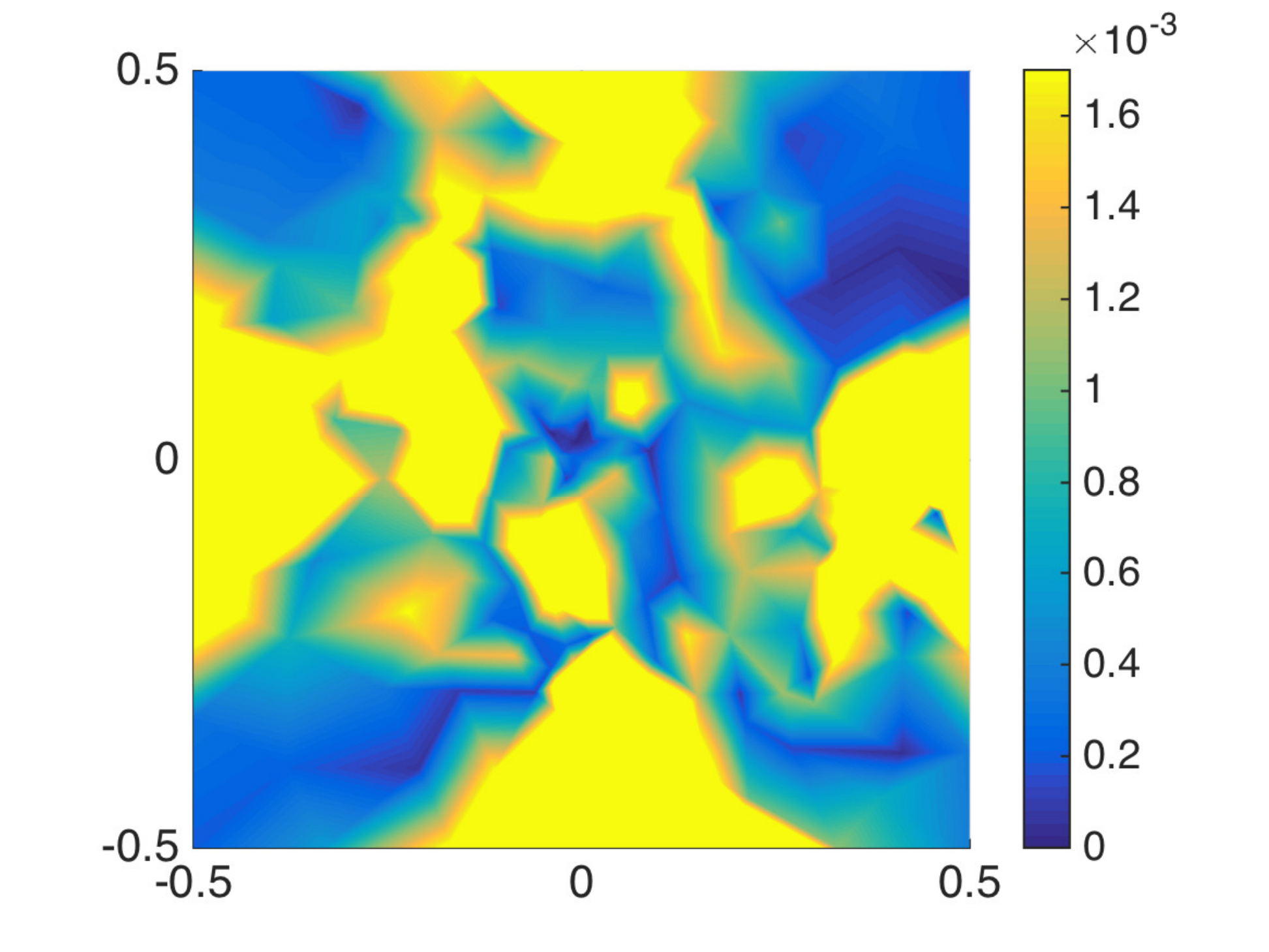}} 
\subfigure[Local MBE]{\includegraphics[width=0.32\textwidth]{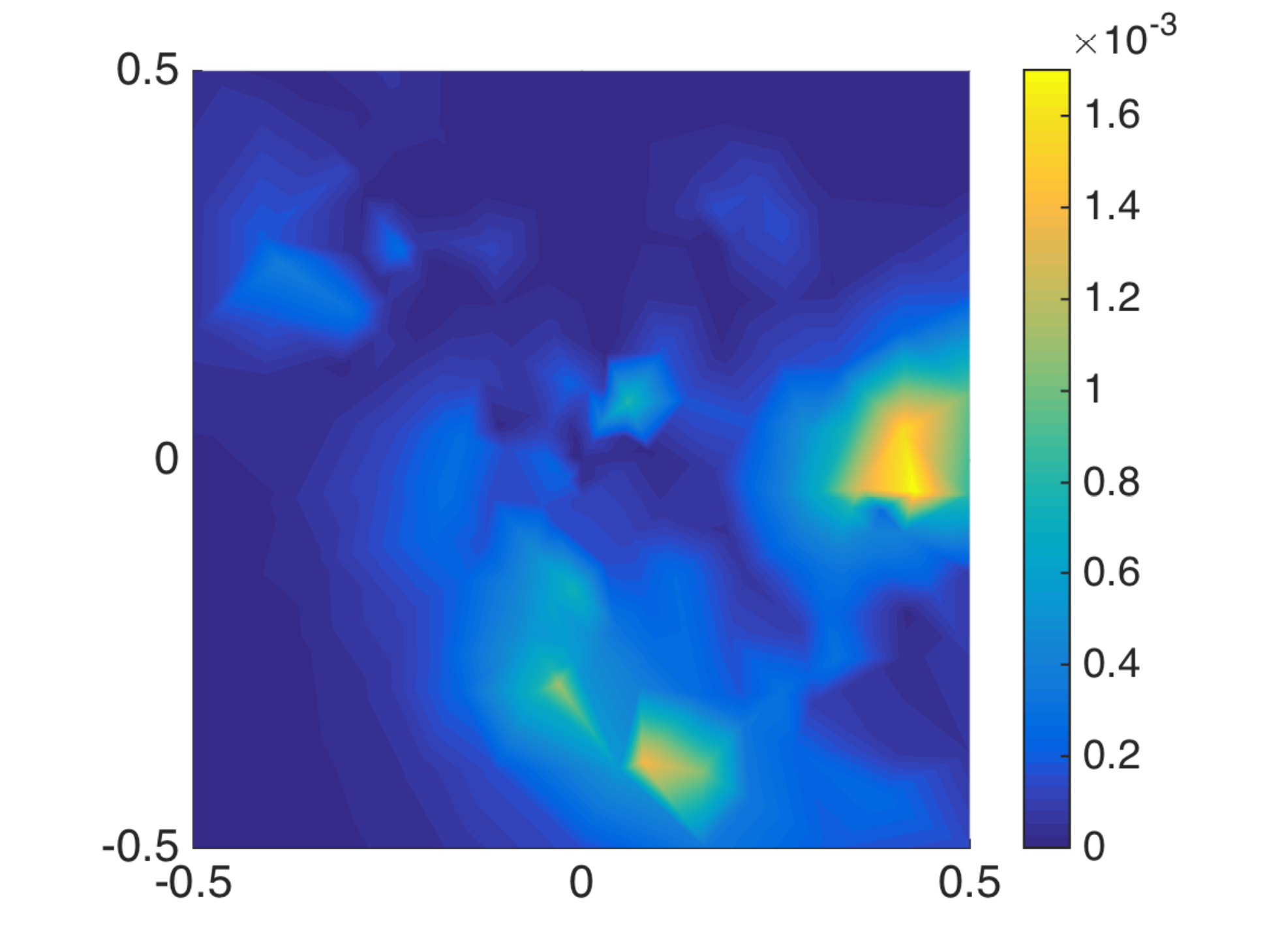} } 
\subfigure[Global MBE]{\includegraphics[width=0.32\textwidth]{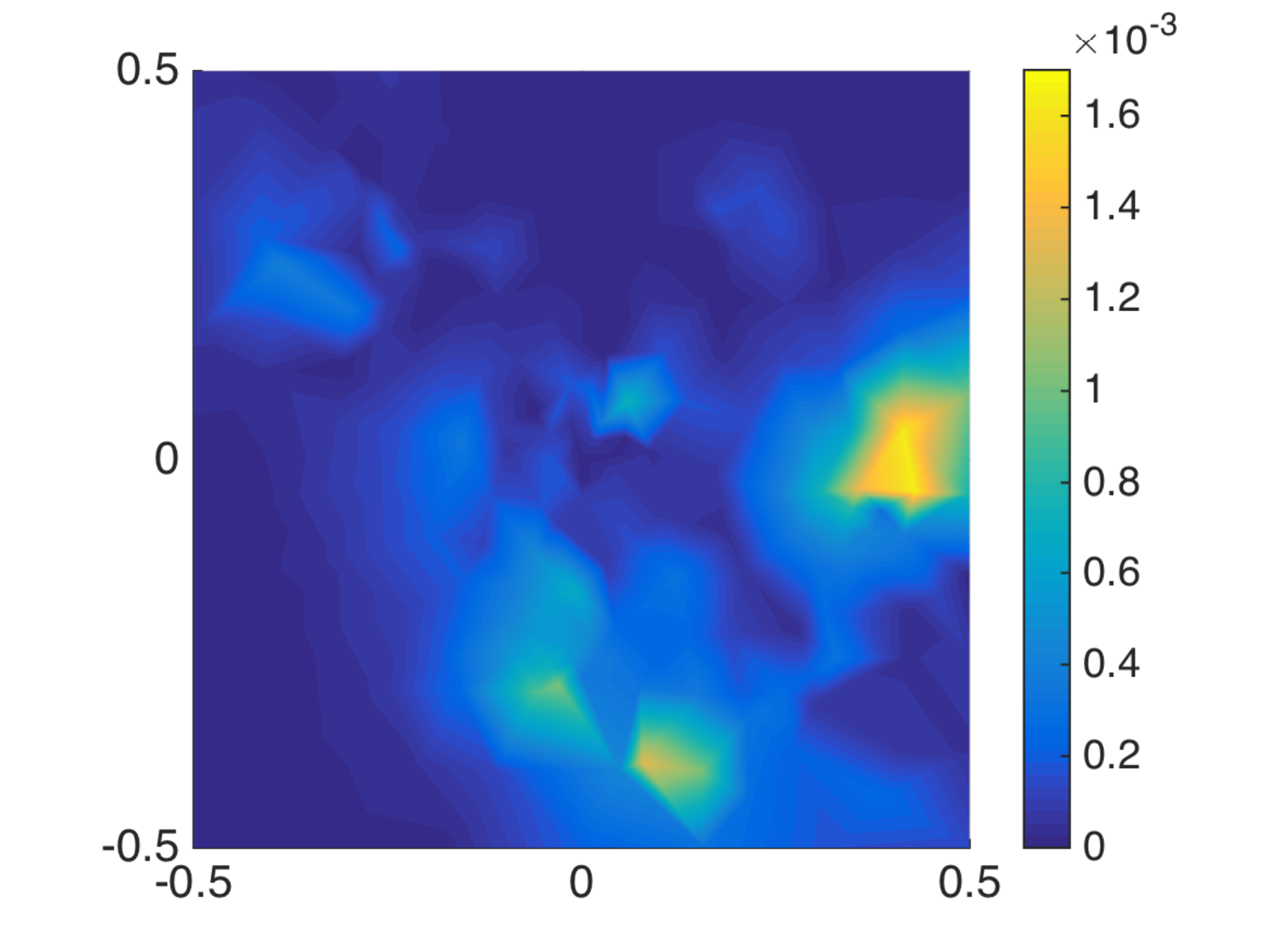}  }
\caption{(a) Initial condition. (b)-(f) Forward error $\|u_h(\fatx_i)-\tilde{u}_h(\fatx_i)\|_{L^2}/\|u_h\|_{L^2}$ for different approximations at each node at $t = 0.01$.}
\label{fig:ForwardE}
\end{figure}

The forward error behaves in the way predicted by the backward error
in Fig.~\ref{fig:BackwardE}, Table~\ref{tab:BackwardError}, and the
stability estimates in Section~\ref{sec:ErrorEstimate}. This is also
confirmed in Fig.~\ref{fig:ForwardETime} where the forward error in
time is displayed in a log-lin scale such that the error for short
times becomes visible.  The unique steady state \eqref{eq:SteadyState}
is reached for large $t$ and the bound $\|u_h-\tilde{u}_h\|_2\leq kt$
with some $k>0$ for the error derived in
Section~\ref{sec:ErrorEstimate} is sharp for small $t$ only.

\begin{figure}[H]
\centering
\includegraphics[width=.5\textwidth]{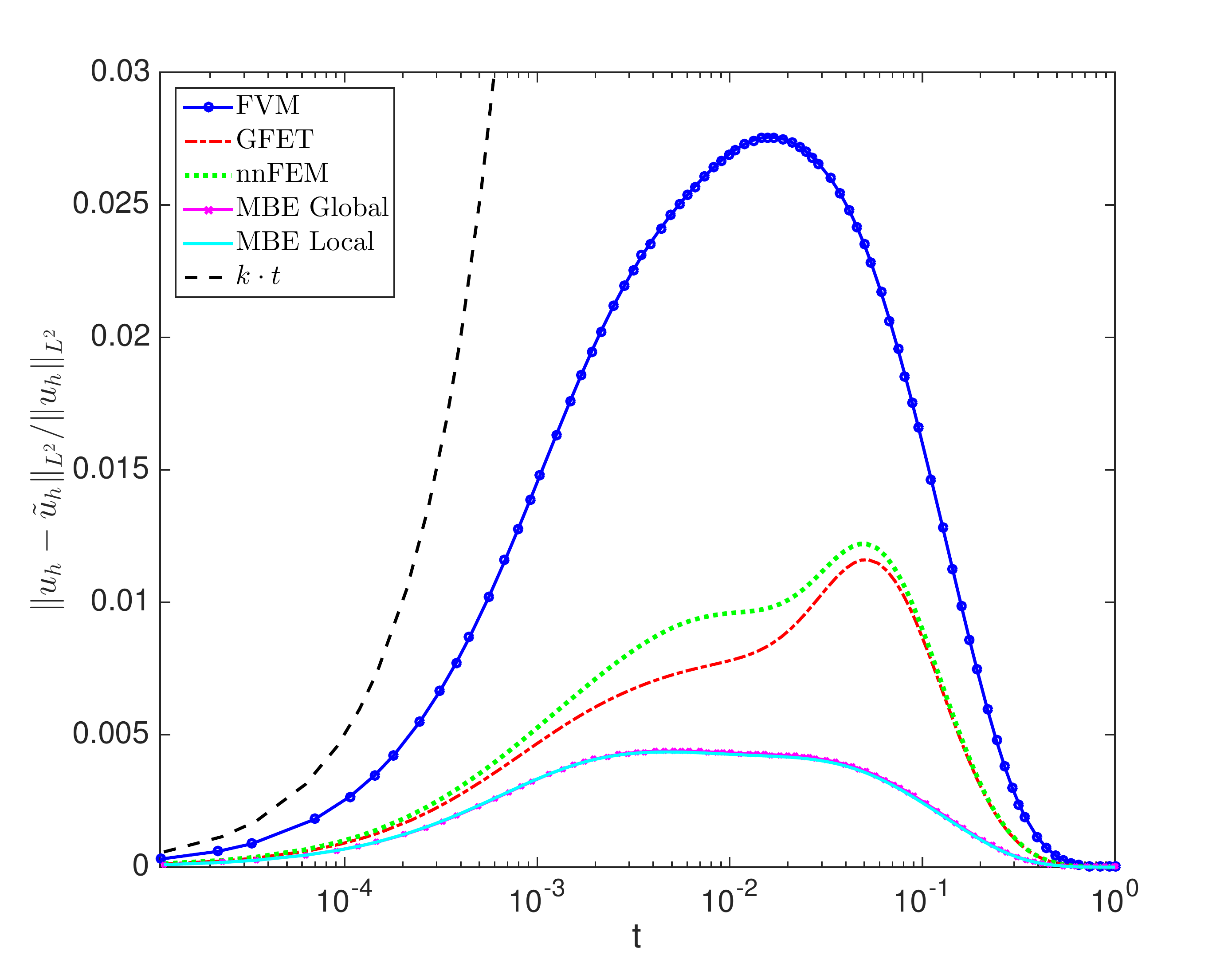}
\caption{Forward relative error $\|u_h-\tilde{u}_h\|_{L^2}/\|u_h\|_{L^2}$ for $0\leq t \leq 1$.}
\label{fig:ForwardETime}
\end{figure}

Comparing the results in Fig.~\ref{fig:ForwardETime} and
Table~\ref{tab:BackwardError} we see that the order between the
methods is the same using the minimization procedure in
Section~\ref{sec:analysis} and the solutions of \eqref{eq:udisc} and
\eqref{eq:udiscdisturb}. The performance of the different methods in
the forward error is correctly predicted by the performance in the
backward error as expected from Section~\ref{sec:ErrorEstimate}. The
MBE is the best and FVM is the worst method but the forward error is
quite small in all methods with a peak for FVM less than three
percent.

\subsubsection{Error in eigenvalues}
In the original equation \eqref{eq:ucont1}, the diffusion is isotropic and the quotient between the eigenvalues in \eqref{eq:coortrans} of $\gamma$ in 2D is $\lambda_1/\lambda_2=1$ and the eigenvectors point in the coordinate directions. The quotient $q=\lambda_{\min}/\lambda_{\max}$ is $g/G$ in Lemma~\ref{lemma:apriori} and for the FVM and the MBE $q$ is found in Fig.~\ref{fig:eigv}. 

\begin{figure}[H]
\subfigure[]{\includegraphics[height=0.2\textheight]{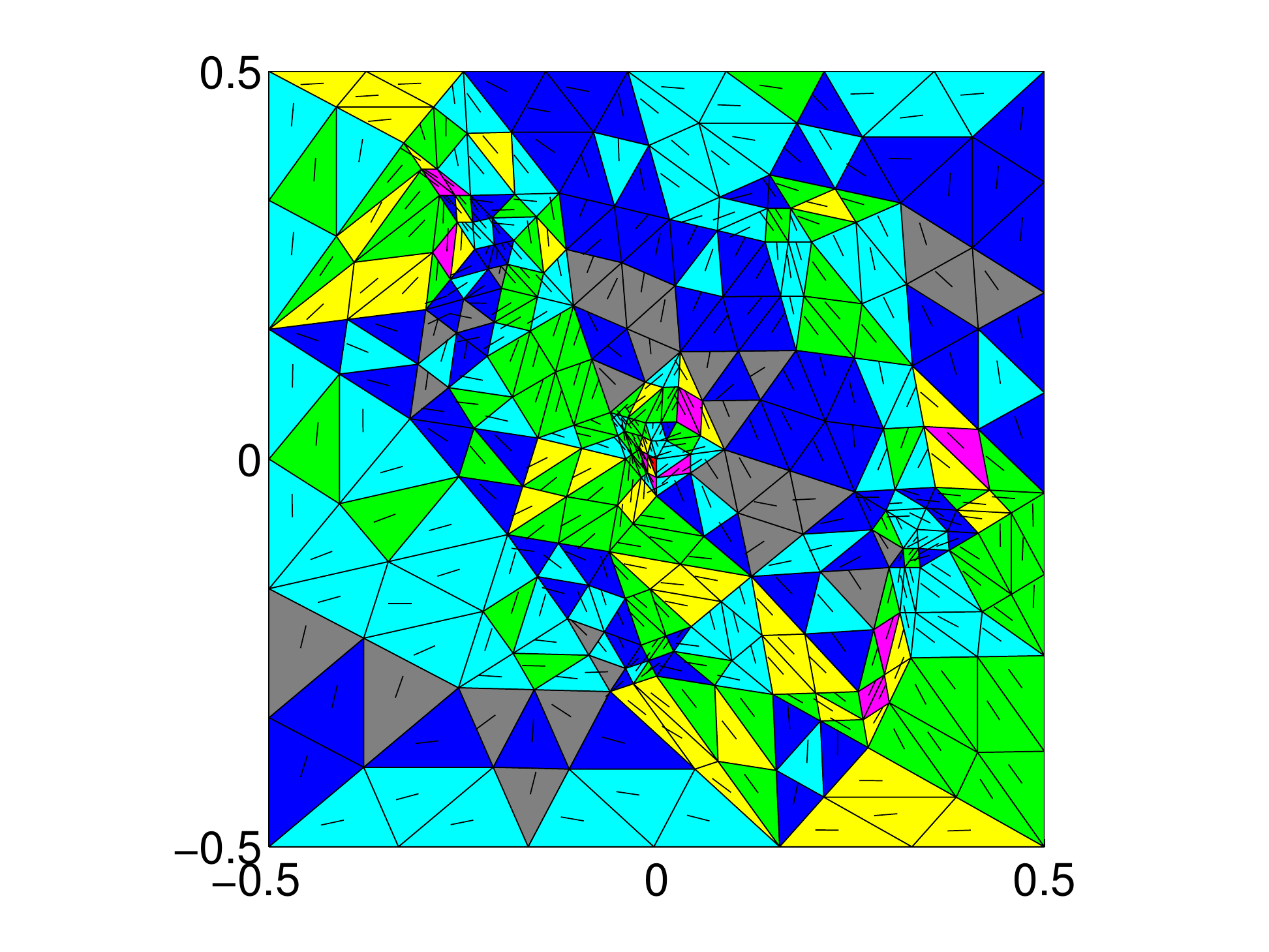}}
\subfigure[]{\includegraphics[height=0.2\textheight]{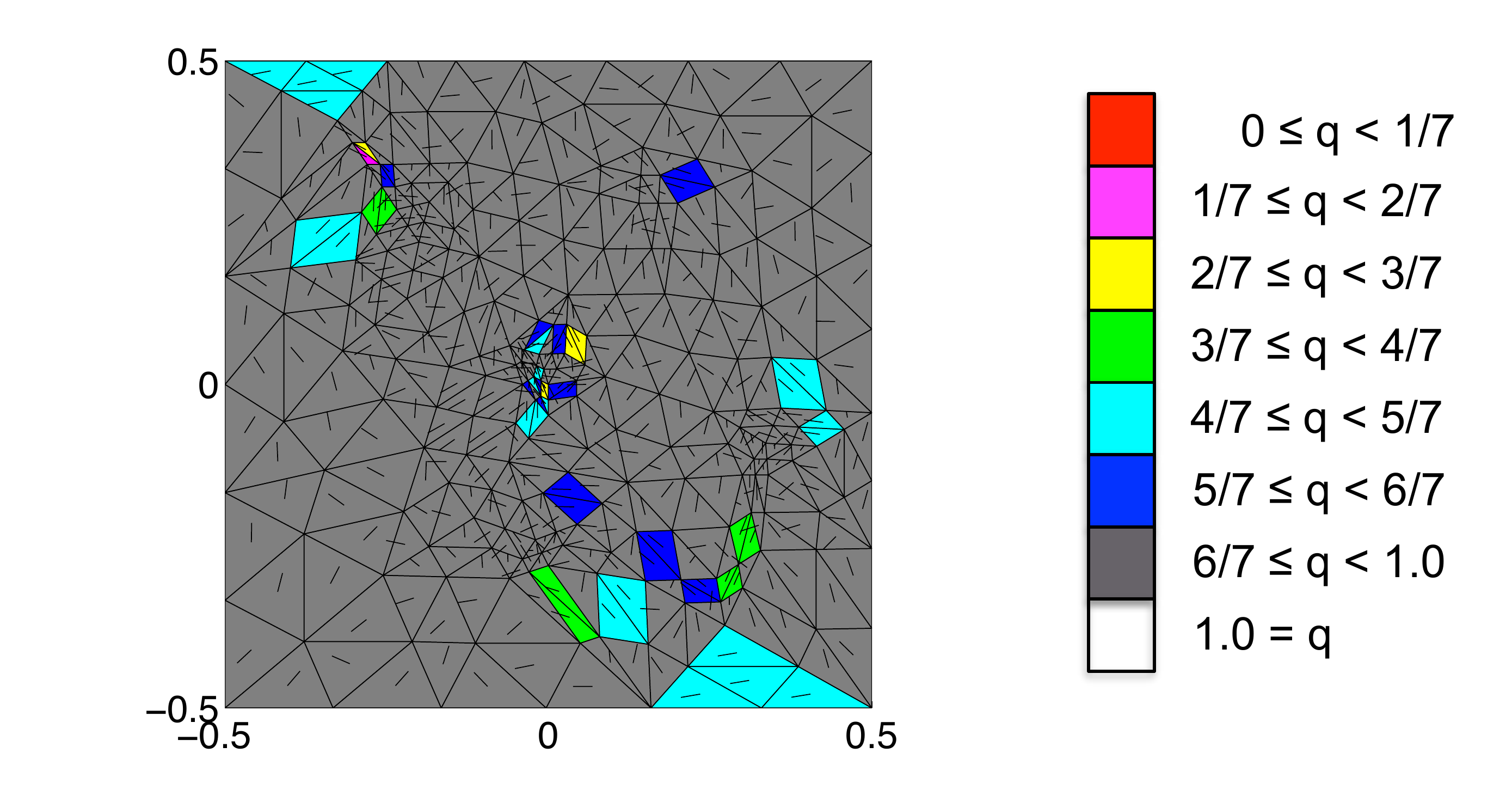}}
\caption{Quotient between the two eigenvalues ${\lambda_{min}}/{\lambda_{max}}$ of $\tilde{\fatgamma}_i$ and the direction of the eigenvector corresponding to the larger eigenvalue. (a) Global FVM. (b) Global MBE.}
\label{fig:eigv}
\end{figure}

Avoiding the negative off-diagonal elements leads to a local anisotropy in the diffusion 
$\tilde{\fatgamma}$ in Fig.~\ref{fig:eigv}. 
The eigenvectors corresponding to the larger eigenvalues are not aligned but point in what appears to be random directions. The effect of the change of the diffusion from $\fatgamma$ to $\tfatgamma$ is randomized over $\Omega$. A global anisotropy is not found here in contrast to what we have in the special regular rhombus mesh in \cite{Lotstedt2015}. All voxels are tilted there in the same direction increasing the diffusion speed in this direction in almost all mesh triangles. A random change of the major axis of diffusion is expected in general in a mesh created by any mesh generator. 

\subsubsection{Alternative methods}
The two alternatives suggested in Section~\ref{sec:alt} - minimizing the difference between $\fatD$ and $\tfatD$ in the $\ell_2$ norm and adding viscosity - are compared to the previous methods.

At a first glance at \eqref{eq:udisc2} and \eqref{eq:udiscdisturb2} it may  seem like minimizing the difference between the matrices $\fatD$ and $\tfatD$ in $\ell_2$ would result in the smallest forward error but this is not the case. Indeed, Fig.~\ref{fig:Alternatives} shows the forward error for this approach together with the best (MBE) and the worst (FVM) methods. Comparing the matrix  deviations in Table~\ref{tab:L2Difference} with the results in Fig.~\ref{fig:Alternatives} reveals that there is only a weak correlation between a small forward error and the closeness of the matrices in the $\ell_2$ norm.
\begin{table}[H]
\centering
\begin{tabular}{l |c}
& $\|\fatD-\tilde{\fatD}\|_2/\|\fatD\|_2$\\
\hline
Viscosity & 0.7856\\
FVM & 0.4520\\
Local MBE & 0.1967 \\
Global MBE & 0.1487\\
nnFEM & 0.1059\\
GFET & 0.0846\\
$\ell_2$ optimal & 0.0696
\end{tabular}
\caption{Relative difference $\|\fatD-\tfatD\|_2/\|\fatD\|_2$ between the discretization matrices.}
\label{tab:L2Difference}
\end{table}

\begin{figure}[H]
\centering
\includegraphics[width =.5\textwidth]{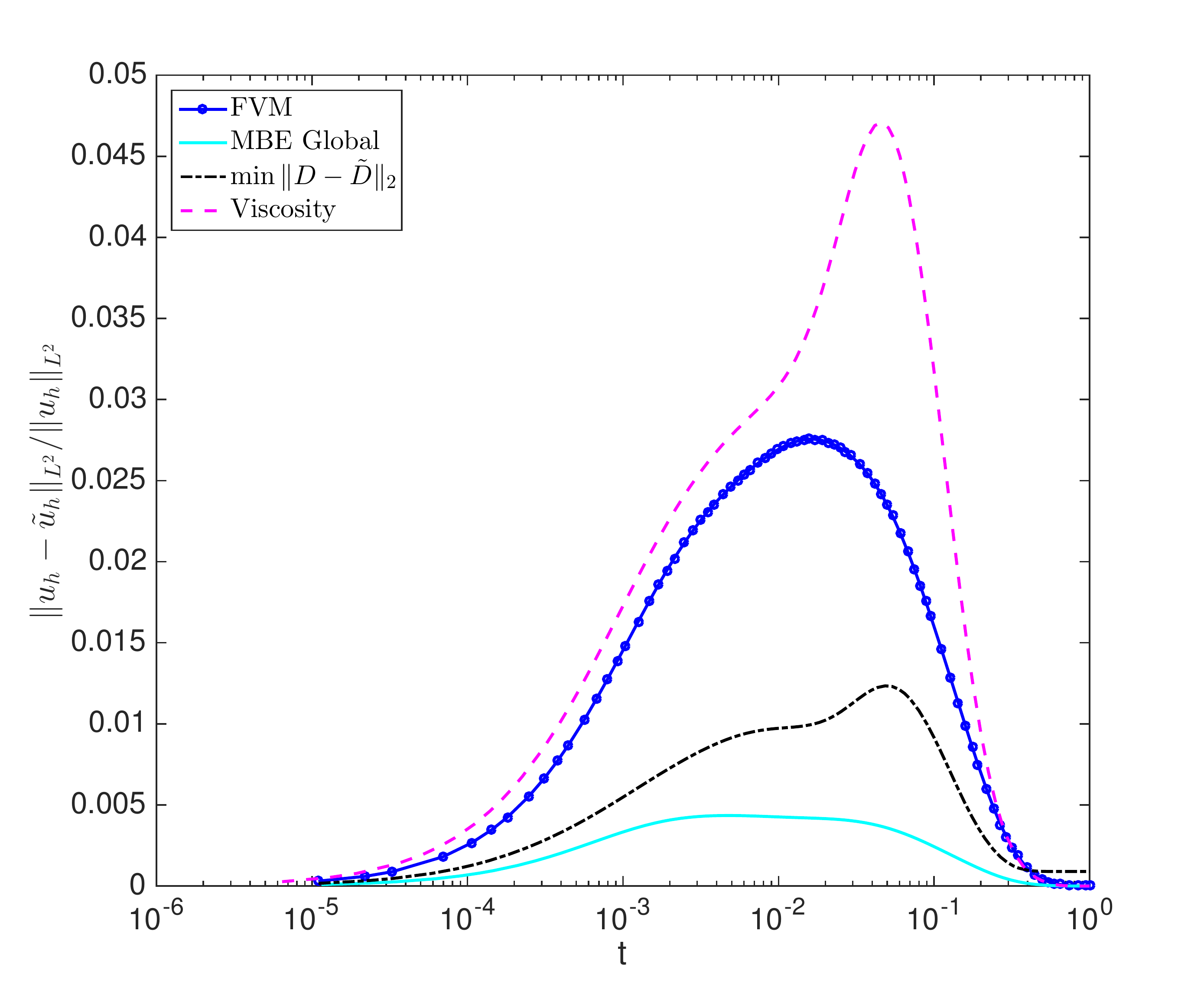} 
\caption{Forward relative error $\|u_h-\tilde{u}_h\|_{L^2}/\|u_h\|_{L^2}$ for $0\leq t \leq 1$. For $\ell_2$-optimization $\eta^G_2=0.1643$. For added viscosity $\eta^G_2=0.6265$.}
\label{fig:Alternatives}
\end{figure}

The $\ell_2$ norm of the difference in the discretization matrices in Table~\ref{tab:L2Difference} neither reflects the behavior of the forward and backward errors nor does the $\tilde{\fatD}$ optimized in the $\ell_2$ norm reproduce the correct steady state in Fig.~\ref{fig:Alternatives}. There is no unique equation and no unique $\tilde{\fatgamma}(x)$ corresponding to a discretization matrix $\tfatD$ in Section~\ref{sec:analysis}. Hence, it is more meaningful to quantify and minimize the error in the solutions $u_h$ and $\tilde{u}_h$ than to compare the discretization matrices representing many different analytical equations.
In Section~\ref{sec:ErrorEstimate} we showed that the $\tfatgamma$ closest to $\fatgamma$ can be used to bound he error in the solution.

We see that adding viscosity to all nodes in the patch leads to adding an unnecessarily large amount of viscosity resulting in a larger error than even the FVM in this case.

\subsubsection{Practical implementation}

%
%


The local backward error is too pessimistic in Table~\ref{tab:BackwardError}, but since only small optimization problems have to be solved involving the local triangles or tetrahedra to an edge, it is faster to compute. Furthermore, the backward error can be reduced by repeatedly applying Algorithm~\ref{alg:Local}. The last solution in one iteration is the initial guess in the next iteration and the edges are traversed in different order in each iteration.

\begin{figure}[H]
\centering
\includegraphics[width=.5\textwidth]{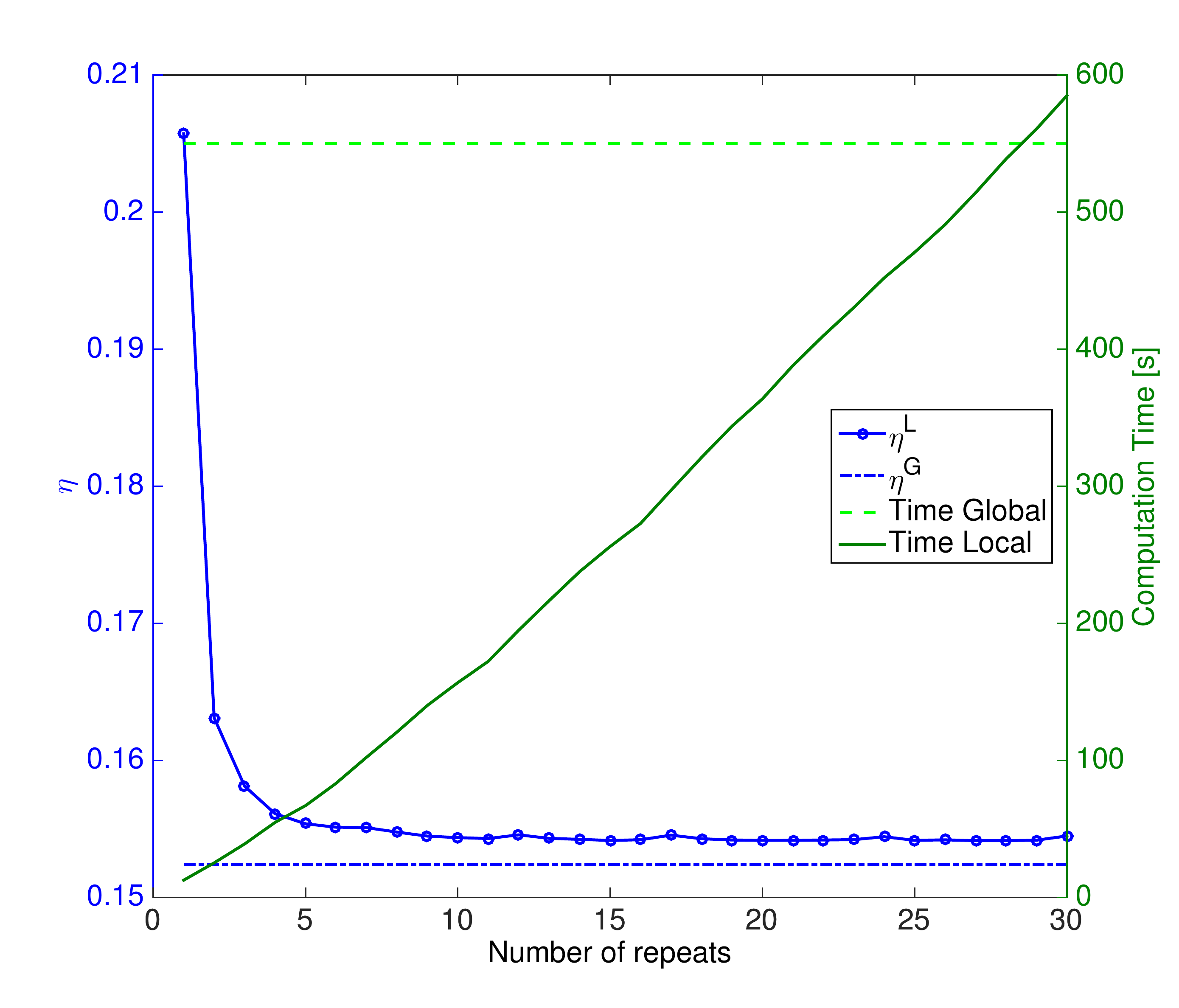} 
\caption{The error $\eta^L_2$ for different number of iterations of Algorithm~\ref{alg:Local} and nnFEM, compared to the error $\eta^G_2$ (blue) and the computing time for $\eta^L_2$ and $\eta^G_2$ (green).}
\label{fig:repeat}
\end{figure}

The effect of repeating Algorithm~\ref{alg:Local} is to spread the local error in each triangle over the domain. The $\eta^L_2$ error is reduced towards the global minimum, when altering the order of the edges in each iteration, but it does not seem to converge to $\eta_2^G$. Repeating Algorithm~\ref{alg:Local} about five times in Fig.~\ref{fig:repeat} is sufficient to achieve an improved backward error at a small increase in computation time.


Minimizing the $\ell_2$ norm becomes prohibitively slow especially in the most expensive algorithm of computing the MBE globally. Therefore, to arrive at a practical implementation we switch to the Frobenius norm \eqref{eq:Frobenius}. Then the minimization problem \eqref{eq:globopt2} has a quadratic objective function in \eqref{eq:globoptFrobenius} and is solved by Matlab's {\tt quadprog} with the interior-point-convex algorithm. The approximate computing times in seconds and the computed backward error for the global MBE are displayed in Table~\ref{tab:TimeFrobenius}.
\begin{table}[H]
\centering
\begin{tabular}{l|c c}
Minimization & Time & $\sqrt{\sum_{T_k\in\calT}|T_k|\|\fatgamma-\tfatgamma_k\|_2^2/|\Omega|}$\\
\hline
 $\|\cdot\|_2$ & 3620 & 0.0690\\
 $\|\cdot\|_F$ & 0.9737 & 0.0804
\end{tabular}
\caption{The computation time in seconds for the global MBE and the resulting global error in the $\ell_2$ norm when minimizing the $\ell_2$ norm and the Frobenius norm in the primal \eqref{eq:globoptFrobenius} problem.}
\label{tab:TimeFrobenius}
\end{table}
Since $\|\fatA\|_2\leq\|\fatA\|_F$ the minimization in the Frobenius norm does not reach the global minimum in the $\ell_2$ norm, but this is compensated for by a substantial speedup.

In Fig.~\ref{fig:Frobenius}, we compare the forward errors for the MBE when minimizing in the Frobenius and $\ell_2$ norms. The error resulting from minimization in the Frobenius norm is not much larger while a reduction of the computing time of more than 3000 is achieved in Table~\ref{tab:TimeFrobenius}.

\begin{figure}[H]
\centering
\includegraphics[width=.5\textwidth]{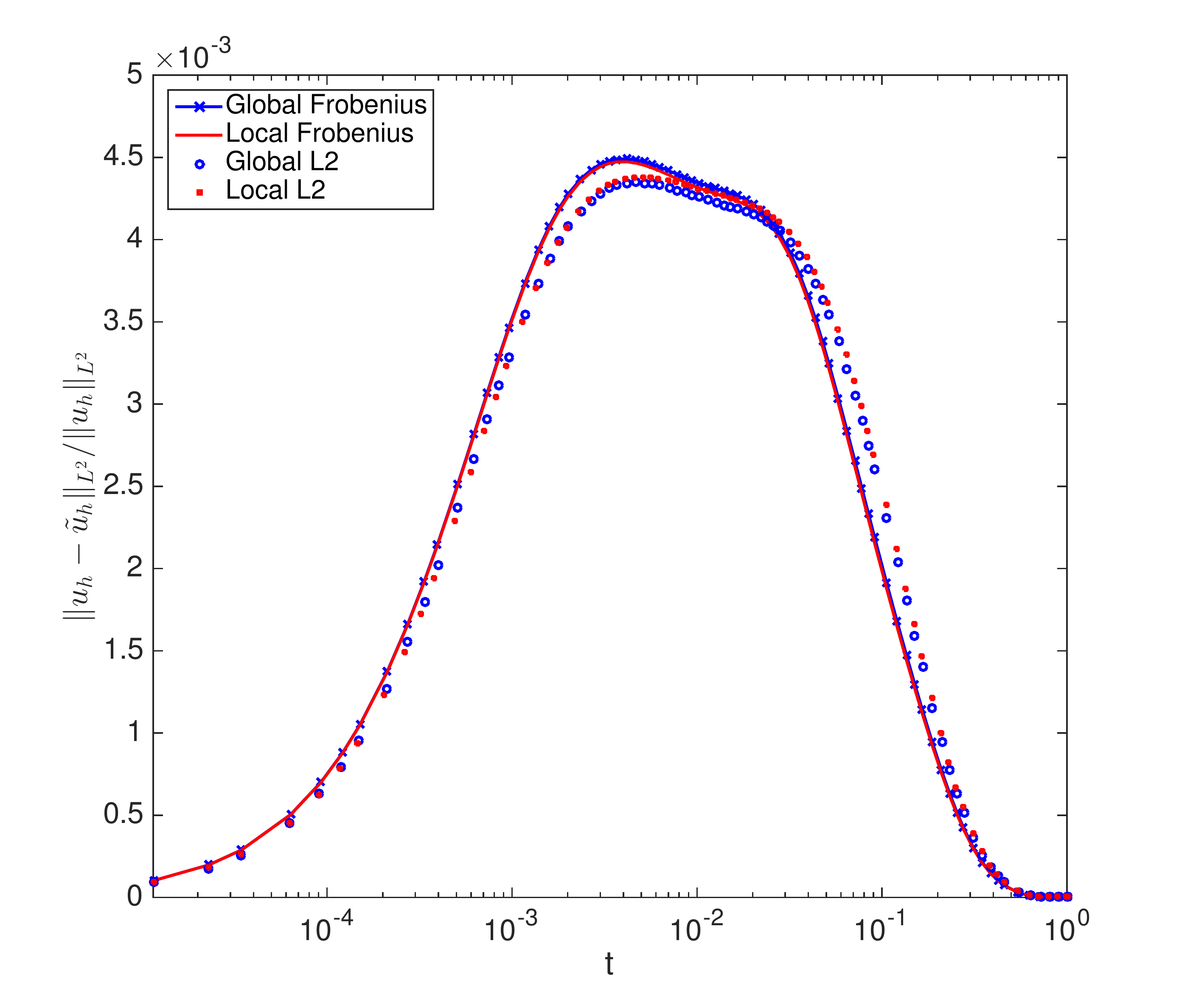} 
\caption{Forward error of the MBE with minimization in Frobenius and $\ell_2$ norms. The errors for the global and local minimizations in the Frobenius norm are indistinguishable.}
\label{fig:Frobenius}
\end{figure}

\subsection{Diffusion in 3D}

Our methods are tested on a more realistic mesh such as those
encountered in systems biology simulations.  A sphere with radius 1 is
discretized into two tetrahedral meshes with 602 and 1660 nodes.
In both meshes about $17$ percent of the edges have a negative jump
propensity with the standard FEM discretization.  Other mesh
generators than the one in COMSOL Multiphysics were tested in
\cite{Kieri} with similar results.

%
%
%

\begin{figure}[H]
\centering
\includegraphics[width=.5\textwidth]{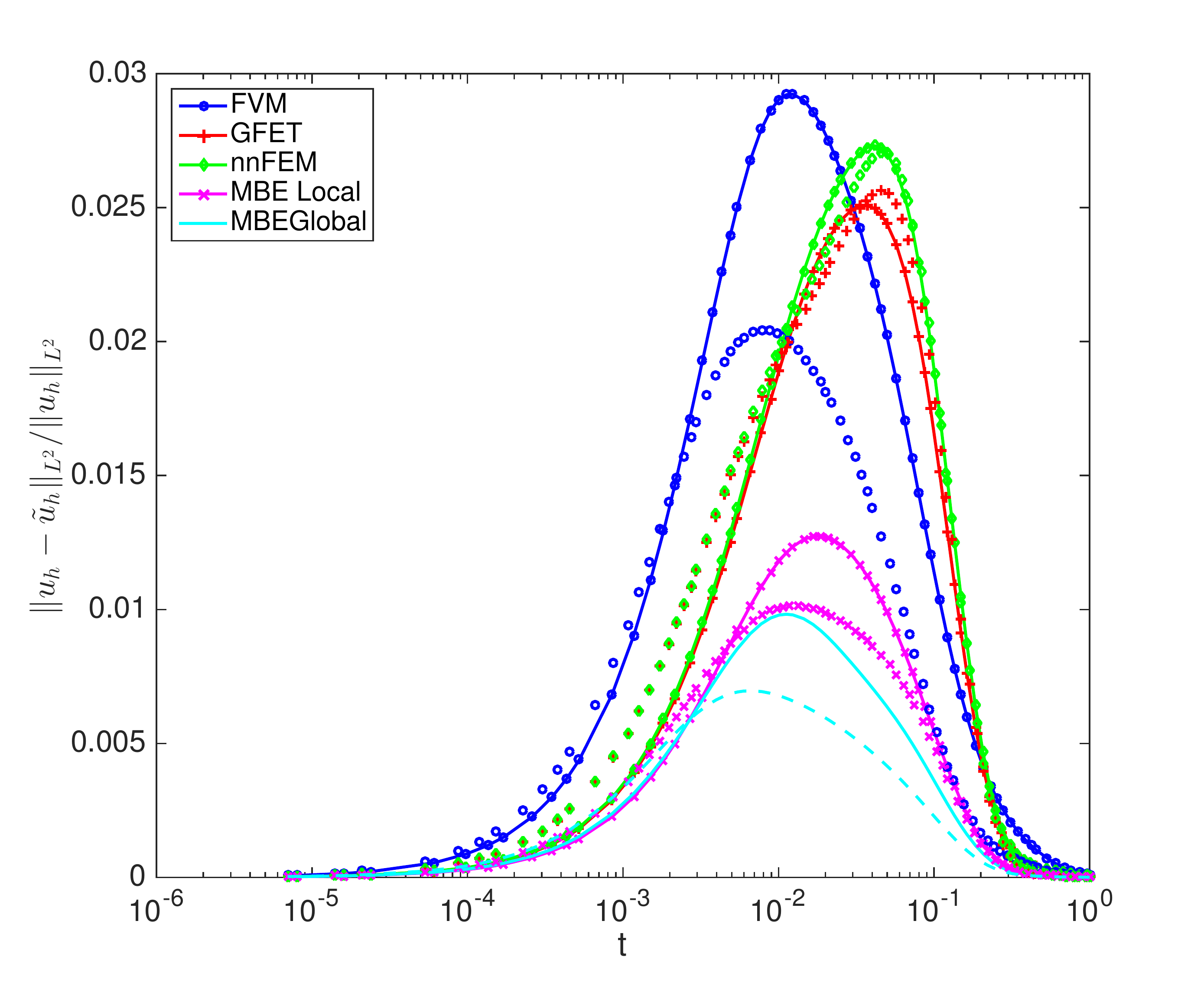}
\caption{Forward error $\|u_h-\tilde{u}_h\|_{L^2}/\|u_h\|_{L^2}$ for the different methods on a mesh with 602 nodes (solid lines) and with 1660 nodes (dashed lines with same markers) discretizing the unit sphere.}
\label{fig:3DForwardE}
\end{figure}

In the following experiments, the global backward error and $\tfatD$
in MBE are computed by minimizing in the Frobenius norm.  The number
of unknown variables in the global optimization problem for the
largest mesh is 51096 in the primal problem
\eqref{eq:globoptFrobenius} and 10599 in the dual problem
\eqref{eq:globoptDual} in Table~\ref{tab:3DBackwardsE}.  The local
optimization problems typically have 24 variables in the primal
problem and 13 in the dual one.  In Fig.~\ref{fig:3DForwardE} and
Table~\ref{tab:3DBackwardsE}, we see that the methods examined in 2D
behave similarly in 3D. The FVM leads to a non-positive definite
diffusion $\tilde{\fatgamma}_k$ in five tetrahedra on the coarse mesh
and twelve on the fine mesh when calculated locally with
Algorithm~\ref{alg:Local} where $\|\cdot\|_2$ is replaced by
$\|\cdot\|_F$. There is a slight difference between the local and
global MBE but the error is in general small for all methods. The
ranking of the methods is the same as in 2D in
Table~\ref{tab:BackwardError} for small $t$. Since the percentage of
negative edges is the same when refining the mesh the nnFEM and GFET
methods do not improve on a finer mesh.
How the backward error of the methods behaves when the mesh is refined
depends not only on the mesh size but also on the shape of the
elements.  For small $t$, the methods perform in forward error on each
mesh as predicted by the respective backward error in
Table~\ref{tab:3DBackwardsE}.

\begin{table}[H]
\centering
\begin{tabular}{l| c c | c c}
& \multicolumn{2}{ |c| }{602} & \multicolumn{2}{ |c }{1660}\\
\hline\hline
& Local & Global & Local & Global\\
\hline
FVM & 0.6415 & 0.4284 & 0.6077 & 0.4227\\
nnFEM & 0.3720 & 0.2898 & 0.3683 & 0.2818\\
GFET & 0.3604 & 0.2618 & 0.3570 & 0.2632\\
MBE & 0.1680 & 0.1593 & 0.1676 & 0.1698
\end{tabular}
\caption{Locally and globally computed errors $\sqrt{\sum_{T_k\in\calT}|T_k|\|\tilde{\fatgamma_k}-\fatgamma\|_2^2/|\Omega|}$ on the coarse (602 nodes) and fine (1660 nodes) meshes.}
\label{tab:3DBackwardsE}
\end{table}

\subsection{Mean first hitting time}
Molecules in biological cells do not only undergo diffusion but also reactions.
In order to measure the error in a way relevant for reaction-diffusion kinetics, we construct a problem that mimics the mean first binding time for two molecules $A$ and $B$ diffusing in a spherical domain in a diffusion limited case. 
The assumption is that the molecules react instantaneously when they are in the same voxel.
We calculate the mean time it takes for molecule $A$ diffusing in $\Omega$ with reflecting boundary conditions at $\pOmega$ to reach a certain node $i$ at $\fatx_i$ where its reaction partner $B$ is located. 
The molecule $A$ is removed when it reaches the node at $\fatx_i$ by introducing a sink at this point. 
This setup models a reaction complex $B$ situated at node $\fatx_i$ transforming our molecule of interest $A$.

Let $p_i(\fatx,t)$ be the probability distribution function of finding $A$ in $\fatx$ at time $t$ when $B$ is at $\fatx_i$ 
and let $\delta$ be the Dirac measure. Then $p_i$ satisfies
\begin{equation}
p_{it}(\fatx,t) = \gamma\Delta p_i(\fatx,t)-k\delta(\fatx-\fatx_i)p_i(\fatx,t)
\label{eq:diffreact}
\end{equation}
with a Neumann boundary condition at $\pOmega$ and a constant
$k>0$. The mean value of the hitting time $\tau_i$ for $A$ to find $B$
is determined for all possible starting positions of $A$ in the mesh.
The initial condition is a uniform distribution of $A$,
$p_i(\fatx, 0)=1/|\Omega|$.  The domain $\Omega$ is the sphere of
radius 1 discretized by the same meshes as in the previous section. A
discrete approximation of \eqref{eq:diffreact} is
\begin{equation}
\fatp_{it} = (\gamma\fatD-\fatK_i)\fatp_i,
\end{equation}
where $\fatp_i^T=(p_{i1}, p_{i2},\ldots,p_{iN})$ and $\fatK_i$ is the
zero matrix except for $K_{ii}=10^9$.

The survival probability $S_i(t)$ and the probability density function $\pi_i(\tau)$ for $\tau_i$ are defined by
\begin{equation}
  S_i(t)=\int_\Omega p_i(\fatx, t)\, d\fatx=P(\tau_i\ge t),\quad\quad \pi_i(t)=-S_{it}.
\end{equation}
The expected value of the hitting time $\tau_i$ can then be calculated by
\begin{equation}
\begin{array}{rl}
\mE[\tau_i]  &= \displaystyle{\int_0^{\infty} \tau \pi_i(\tau)\, d\tau=-\int_0^\infty \tau S_{it}\, d\tau
              =\int_0^\infty \int_\Omega p_i(\fatx, \tau)\, d\fatx d\tau}\\ 
             &\displaystyle{\approx \int_0^{\infty}\sum_{k=1}^N|\calV_k|p_{ik}\, d\tau}.
\end{array}
\label{eq:EHit}
\end{equation}

In Table~\ref{tab:Hitting}, we compare $\mE[\tau_i]$ on the coarse mesh for the original discretization matrix $\fatD$ and the modified discretizations $\tfatD$ described in Sections~\ref{sec:mesodiff} and \ref{sec:design}. A sink is placed at one node $i$ in the mesh. 
Since many interesting reactions in cells occur in reaction complexes bound to the membrane or in the nucleus we especially investigate the time it takes for $A$ to either find $B$ at a boundary node or at the node closest to the center. The average of $\mE[\tau_i]$ over $i$ is first computed when the sink and $B$ are at any node and at any node on the boundary. Then the sink is at the node closest to the center. Finally, $\mE[\tau_i]$ is computed with stochastic simulation employing Algorithm 1. The time for $A$ to reach $B$ at the center is recorded for each trajectory and the average is taken over $10^5$ realizations. The inital position of $A$ is sampled from a uniform distribution. With $N$ being the total number of nodes, $N_B$ the number of boundary nodes, $M$ the number of trajectories, and $\tau^m_c$ the hitting time of trajectory $m$, the quantities in the table are
\begin{equation}
\begin{array}{lll}\label{eq:Edef}
\displaystyle{E_{\text{All}}=\frac{1}{N}\sum_{i=1}^N \mE[\tau_i],\quad E_{\text{Bnd}}=\frac{1}{N_B}\sum_{\fatx_i\in\pOmega} \mE[\tau_i],\quad E_{\text{Cdet}}= \mE[\tau_c],}\\
\displaystyle{E_{\text{Cstoch}}=\frac{1}{M}\sum_{m=1}^M \tau_c^m,\quad E_{\text{Std}}^2=\frac{1}{N_B-1}\sum_{\fatx_i\in\pOmega} (\mE[\tau_i]-E_{\text{Bnd}})^2.}
\end{array}
\end{equation}

The results with standard FEM are found in the top row in the table as
reference values. The FEM values are second order accurate and
converge to the analytical values of the original diffusion equation
when the mesh size is reduced. Since the FEM stiffness matrix has
negative off-diagonal elements, stochastic simulation with its jump
coefficients is impossible. In Fig.~\ref{fig:Radius}, we illustrate
the results in Table~\ref{tab:Hitting} by plotting the expected time
to reach a node $\fatx_i$ as a function of its radial position. We
average the expected exit times at all nodes in shells of the sphere
of width 0.1.

\begin{table}[H]
\centering
\begin{tabular}{l | rrrr}
& $E_{\text{All}}$ & $E_{\text{Bnd}}$ & $E_{\text{Cdet}}$ & $E_{\text{Cstoch}}$ \\
\hline
\hline
FEM & 8.0413 & 11.6860 & 4.9211 & N/A \\
\hline
FVM & 8.8255 & 12.7055 & 5.9304 & 5.9733 \\
GFET & 7.7308 & 11.4102 & 3.6250 & 3.6424 \\
nnFEM & 7.4794 & 10.8482 & 4.4648 & 4.4707 \\
MBE & 8.2944 & 12.0863 & 5.3001 & 5.3323 \\
\end{tabular}
\caption{Averages of the expected first hitting time $\mE[\tau_i]$ defined in \eqref{eq:Edef} for different methods in Columns 2-5 on the mesh with 602 nodes.}
\label{tab:Hitting}
\end{table}

\begin{figure}[H]
\centering
\includegraphics[width=.5\textwidth]{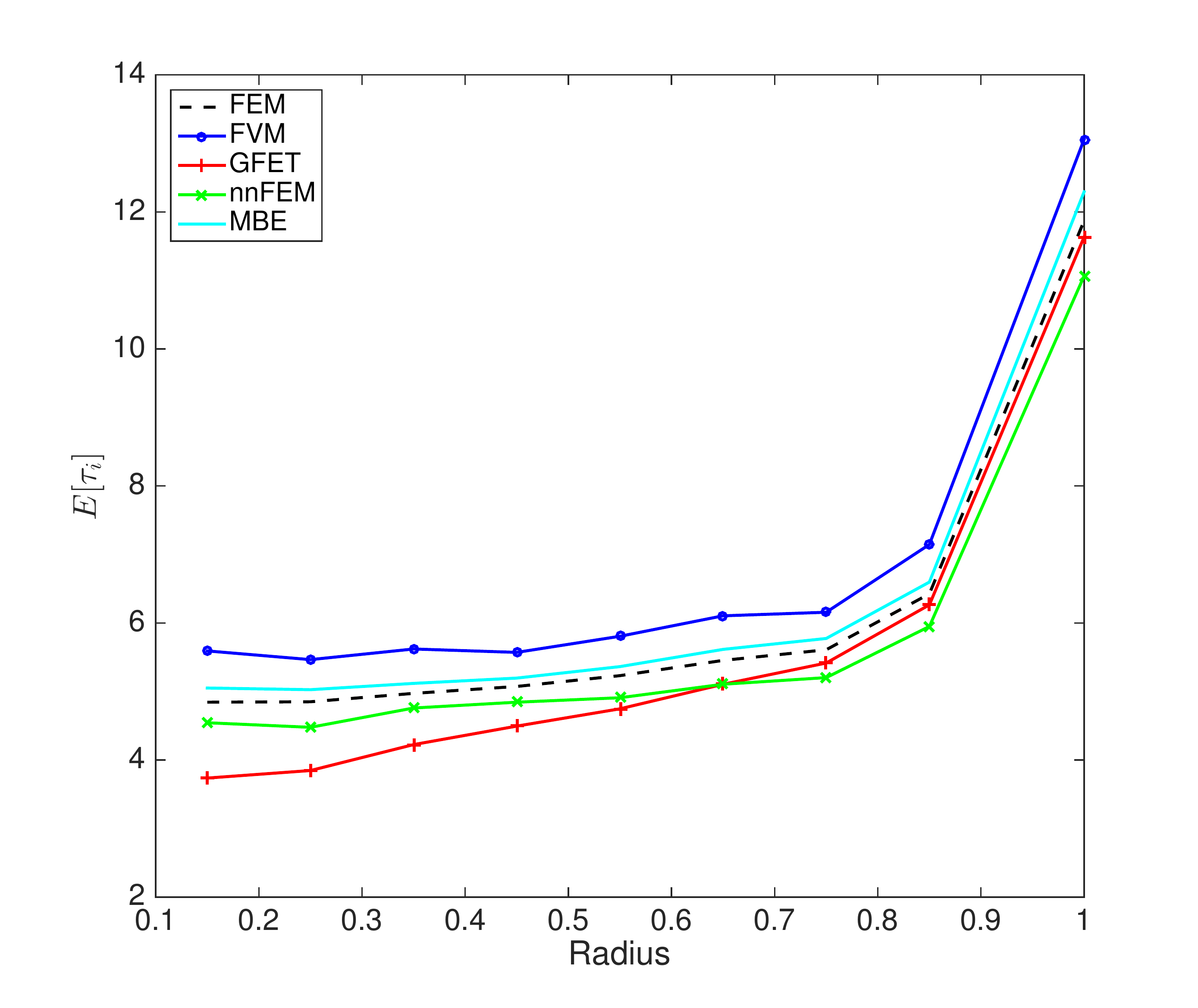} 
\caption{Expected exit times as function of the radial position.}
\label{fig:Radius}
\end{figure}

The MBE is the superior method both in the average over all nodes and
to reach the center when compared to the FEM in
Table~\ref{tab:Hitting} and Fig.~\ref{fig:Radius}. The GFET which was
designed in \cite{Lotstedt2015} to be accurate for the global first exit
time - meaning the first hitting time of a boundary node - performs
best for the boundary nodes. The GFET is, however, not able to compute
the time to reach the center of the cell very accurately. This
shortcoming of the method was discussed in \cite{Lotstedt2015}. The FVM is
too slow with a longer time to reach the sinks than FEM and GFET and
nnFEM are a little too fast corresponding to a global diffusion
coefficient larger than $\gamma$. This tendency was noted also in
\cite{Lotstedt2015}.  The results of the stochastic simulations in Column 5
for the central node are close to the deterministic values of
\eqref{eq:EHit} in Column 4 as expected for a large $M$.

The standard deviation of the mean time it takes to reach a boundary
node measures the variation between different nodes at the boundary
and should be small (ideally 0) for a good mesh and an accurate
discretization. In Table~\ref{tab:STD}, we compare the expected time
to reach a boundary node and its standard deviation \eqref{eq:Edef}
for the different methods on the two meshes.

\begin{table}[H]
\centering
\begin{tabular}{l| c c | c c}
& \multicolumn{2}{ |c| }{602} & \multicolumn{2}{ |c }{1660}\\
\hline
& $E_{\text{Bnd}}$ & $E_{\text{Std}}$/$E_{\text{Bnd}}$ & $E_{\text{Bnd}}$ & $E_{\text{Std}}$/$E_{\text{Bnd}}$\\
\hline\hline
FEM & 11.6860 & 0.0876 & 15.8015 & 0.0808\\
\hline
FVM & 12.7055 & 0.1557 & 16.7264 & 0.1386\\
GFET & 11.4102& 0.1170 & 15.4179 & 0.1060\\
nnFEM & 10.8482 & 0.1171 & 14.6743 & 0.1060\\
MBE & 12.0863 & 0.0908 & 16.2679 & 0.0853
\end{tabular}
\caption{Expected time to reach a boundary node and its standard deviation on a mesh with 602 nodes (left) and a fine mesh with 1660 nodes (right).}
\label{tab:STD}
\end{table}

The standard deviation of MBE is close to that of FEM demonstrating
that the anisotropy in the diffusion introduced by MBE has a small
impact compared to the accuracy effects of the discretization and the
mesh. The small difference in $E_{\text{Std}}/E_{\text{Bnd}}$ between
MBE and FEM is most likely explained by the random directions of
maximum and minimum diffusion as in Fig.~\ref{fig:eigv}.
The relative standard deviation is reduced slightly when the mesh is
refined but $E_{\text{Bnd}}$ has not yet converged.  The
$E_{\text{Bnd}}$ closest to the FEM value is obtained by GFET. When
simulating a signal being transmitted inside the cell it is
advantageous to use GFET if the important reactions occur on the
membrane. If the signal on the other hand is travelling inside the
cytoplasm and reacting there, then the MBE results in the most
accurate transmission time in Table~\ref{tab:Hitting}.


\section{Conclusion}
\label{sec:concl}

For the discrete stochastic simulation of diffusion in systems
biology, we need jump propensities for the molecules in the discrete
space model. These propensities are chosen as the off-diagonal
elements of the discretization matrix obtained by a numerical
approximation of the Laplacian. The jump coefficients have to be
non-negative. For unstructured meshes, non-negative off-diagonal
elements cannot be guaranteed with a discretization matrix assembled
by a standard finite element method (FEM) but there exist different
approaches to change this discretization matrix to fulfill the
non-negativity condition. As a result of this change, a diffusion
equation with an altered diffusion is approximated.

We first present a method to analyze these existing methods producing
non-negative jump propensities on an unstructured mesh of poor
quality. The difference between the solution to the original and the
perturbed diffusion equations is bounded by the difference in the
diffusion coefficients. Then the perturbed diffusion is retrieved by
backward analysis. This leads us to the derivation of a new algorithm
creating a discretization matrix based on FEM, minimizing the backward
error.

We show in numerical experiments that the finite volume method (FVM)
to compute a non-negative discretization incurs high forward and
backward errors on our meshes. Our previously proposed methods of
eliminating the negative entries in the finite element matrix (nnFEM)
and satisfying the global first exit time constraint (GFET) perform
comparably. The new method to generate jump coefficients on a given
mesh proposed in this paper results in a considerably smaller error on
both an artificial mesh in 2D and a realistic mesh in 3D.

The average of the first hitting time obtained by stochastic
simulations with non-negative jump coefficients is close to the
solution of a deterministic equation with a modified diffusion as
expected. The accuracy of this average compared to the exact
analytical values not only depends on the number of trajectories in
the Monte Carlo simulation but also on the mesh size and the mesh
quality. In general, with the MBE the backward and forward errors are
small and the mean hitting time to any node is well approximated.  The
errors in the stochastic diffusion simulation are of the same order as
the errors in biological measurements \cite{PLUE13, RPGBC}.
Furthermore, there is a variation in the diffusion constant across the
cell in measurements in \cite{RaLaGra14} comparable to the variation
of the space dependent $\tfatgamma$ of the perturbed diffusion
equation \eqref{eq:ucontdisturb2} determined by our algorithms.

Since the off-diagonal elements are non-negative, the FEM
discretization of the equation with the modified diffusion satisfies
the sufficient conditions for the discrete maximum principle for the
FEM solution. The discrete maximum principle being satisfied for the
original diffusion on any mesh, seems to be possible only with a
non-linear scheme as in \cite{BuErn}. Then the stiffness matrix
$\fatS$ is reassembled in every time step. Our scheme is linear with a
constant $\fatS$ but for a modified diffusion.


\section*{Acknowledgment}

This work has been supported by the Swedish Research Council with grant number 621-2011-3148, the
UPMARC Linnaeus center of Excellence, the Swedish strategic research programme eSSENCE, and the NIH grant for StochSS
with number 1R01EB014877-01. We have had fruitful discussions with
Murtazo Nazarov and Stefano Serra-Capizzano concerning parts of this
work.

\bibliographystyle{plain}
\bibliography{mesomicro}


\end{document}